\documentclass[10pt,reqno,a4paper,twoside]{extarticle}

\usepackage{charter}

\usepackage[left=1in,right=1in,top=1in,bottom=1in]{geometry}

\usepackage{graphicx, varioref, amsfonts}
\usepackage{amsthm,amssymb,amsmath,mathrsfs,mathtools,stmaryrd} %\usepackage{stmaryrd}
\usepackage{dsfont}
\usepackage{upgreek} %% txgreeks
\usepackage{esint} %% nice integral symbols 
\usepackage{breakurl}
\usepackage{empheq}
\usepackage{enumerate,enumitem}
\usepackage{hyperref}
\usepackage{cleveref}
\usepackage{autonum}
\usepackage{soul}
\usepackage{cancel}
\usepackage{amscd}
\usepackage{srcltx}
\usepackage{ifthen}
\usepackage{float}
\usepackage{color}
\usepackage{comment}
\usepackage{chngcntr}
\usepackage{etoolbox}
\usepackage{fancyhdr}
\usepackage{breqn}
\usepackage{cite}
\usepackage[T1]{fontenc}%spelling swiech
\usepackage{ulem}
\usepackage{xcolor}

\usepackage{titlefoot}

\hypersetup{
    colorlinks=true,
    linkcolor=blue,
    filecolor=blue,      
    urlcolor=blue,
    citecolor=blue,
    pdftitle={Overleaf Example},
    pdfpagemode=FullScreen,
    }

\usepackage{authblk} % For \affil command
\theoremstyle{plain}
\newtheorem{lemma}{Lemma}[section]

\newtheorem{proposition}[lemma]{Proposition}

\newtheorem{theorem}[lemma]{Theorem}

\newtheorem{assumption}[lemma]{Assumption}

\theoremstyle{definition}

\newtheorem{definition}[lemma]{Definition}

\newtheorem{example}[lemma]{Example}
\newtheorem{remark}[lemma]{Remark}
\newcommand{\R}{\ensuremath{\mathbb R}} 
\DeclareMathOperator*{\argmax}{arg\,max}

\newlist{todolist}{itemize}{2}
\setlist[todolist]{label=$\square$}
\usepackage{pifont}

\numberwithin{equation}{section}
\begin{document}

\title{Mean Field Games in Hilbert Spaces with Degenerate Diffusion: A Viscosity Solution Approach
}
\newcommand\shorttitle{Mean Field Games in Hilbert Spaces}
\date{May 12, 2026}
%\date{\today}

\author{Andrzej {\'{S}}wi{\k{e}}ch\footnote{School of Mathematics, Georgia Institute of Technology, Atlanta, GA 30332, USA; Email: swiech@math.gatech.edu}}
\author{Lukas Wessels\footnote{Laboratoire J.A. Dieudonn\'e, Universit\'e C\^ote d’Azur, 06108 Nice, France; Email: lukas.wessels@univ-cotedazur.fr}}
\newcommand\authors{Andrzej {\'{S}}wi{\k{e}}ch and Lukas Wessels}

\affil{}%\small Technische Universit\"at Berlin}

\maketitle

\unmarkedfntext{\textit{Mathematics Subject Classification (2020) ---} 35D40, 35Q84, 35Q89, 35R15, 49L12, 49L25, 49N80, 91A16}

%28A33: Spaces of measures, convergence of measures
%35D40: Viscosity solutions to PDEs
%35F21: Hamilton--Jacobi equations
%35K57: Reaction-diffusion equations
%35Q84: Fokker-Planck equations
%35Q89: PDEs in connection with mean field game theory
%35R15: PDEs on infinite-dimensional (e.g., function) spaces (= PDEs in infinitely many variables
%49K27: Optimality conditions for problems in abstract spaces
%49K45: Optimality conditions for problems involving randomness
%49L12:	Hamilton--Jacobi equations in optimal control and differential games
%49L20: Dynamic programming in optimal control and differential games
%49L25:	Viscosity solutions to Hamilton--Jacobi equations in optimal control and differential games
%49N35:	Optimal feedback synthesis
%49N80: Mean-field games and control
%60H15: SPDEs (aspects of stochastic analysis)
%65K10: Numerical optimization and variational techniques
%91A16: Mean field games (aspects of game theory)
%93E20: Optimal stochastic control

\unmarkedfntext{\textit{Keywords and phrases ---} Mean field games, PDEs in infinite dimensional spaces, Hamilton--Jacobi--Bellman equation, Fokker--Planck equation, Viscosity solutions, Wasserstein space, Stochastic differential equations in infinite dimension.}

%\unmarkedfntext{\textit{Email}: \textbullet$\,$ TBD $\,$\textbullet$\,$ TBD $\,$\textbullet$\,$ TBD}

\begin{abstract}
%We study a mean field game (MFG) 
%in which the state dynamics are governed by a stochastic differential equation in a Hilbert space $H$ with degenerate noise. The associated 
We study a degenerate second order mean field game (MFG) system in a Hilbert space $H$ which couples a Fokker--Planck equation describing the evolution of probability measures on $H$ with a Hamilton--Jacobi--Bellman (HJB) equation for the value function. Our main result establishes existence and uniqueness of solutions to this coupled system. Solutions of the HJB equation are interpreted in the viscosity sense. For existence, we extend the classical fixed-point approach based on Tikhonov’s theorem to our setting. A central difficulty in this approach is proving uniqueness for the corresponding linear degenerate Fokker--Planck equation. To address this issue, we introduce a class of suitable adjoint equations and employ viscosity solution techniques to construct sufficiently regular solutions. Uniqueness for the full MFG system is then obtained via an adaptation of the Lasry--Lions monotonicity method.
\end{abstract}

%\tableofcontents

\section{Introduction}

In this paper, we study a class of mean field game systems formulated on a Hilbert space. Such systems arise as the limit of $N$-player stochastic differential games as $N\to\infty$, where the dynamics of each agent are governed by an infinite-dimensional stochastic differential equation (SDE) involving unbounded operators. Our framework encompasses important examples, including stochastic delay differential equations and stochastic partial differential equations.

More precisely, let $H$ be a real, separable Hilbert space, and denote by $\mathcal{P}_1(H)$ the Wasserstein space of order $1$ over $H$. We consider the following coupled system of partial differential equations:
\begin{align}
    \partial_t v(t,x) + Lv(t,x) - \mathcal{H}(x,Dv(t,x),m(t))& = 0, & v(T,\cdot) &= G(\cdot,m(T)) \label{HJB} \tag{HJB} \\
    \partial_t m(t) - L^* m(t) - \operatorname{div} \left ( \mathcal{H}_p(x,Dv(t,x),m(t)) m(t) \right ) &= 0,& m(0) &= m_0 \in \mathcal{P}_1(H), \label{FP} \tag{FP}
\end{align}
where $v:[0,T]\times H \to \mathbb{R}$ is the value function of a representative agent and $m : [0,T] \to \mathcal{P}_1(H)$ describes the distribution of the population. For a function $\phi:H \to \mathbb{R}$, the operator $L$ is formally defined by
\begin{equation}\label{operator_L}
    L\phi(x) := \langle Ax, D\phi(x) \rangle + \frac12 \operatorname{Tr}[ \sigma(x) \sigma^*(x) D^2 \phi(x) ]
\end{equation}
where $A$ is an unbounded operator on $H$. Above, $L^*$ is the formal adjoint of $L$ and $\operatorname{div}$ is another formal operator representing the ``negative of the adjoint of the gradient.'' The precise meaning of these operators will be explained in Section \ref{sec:LFP} and our assumptions on the coefficients are given in Section \ref{sec:notass}. We remark that in this paper we write \eqref{HJB} and other terminal value problem HJB equations using the convention of \cite{fabbri_gozzi_swiech_2017} rather than the perhaps more customary one with the minus sign in front of $\partial_t v$, (which was also used in \cite{federico_gozzi_swiech_2026}).

The main novelty and difficulty in the study of this MFG system stems from the infinite dimensionality of the underlying state space $H$ combined with the unboundedness of the operator $A$ and the degeneracy of the diffusion coefficient $\sigma$ in \eqref{operator_L}. Our main contribution is the proof of existence and uniqueness of solutions to the MFG system \eqref{HJB}--\eqref{FP} in the framework where the HJB equation is interpreted in the viscosity sense. Along the way, we establish uniqueness of weak solutions to a linear degenerate Fokker--Planck equation with our operator $L^*$. This is a new result which is of independent interest. Our results also hold for first order MFG systems. To maintain a clear focus on the main ideas and core analytical arguments, we adopt relatively strong assumptions on the Hamiltonian $\mathcal{H}$ (see Assumptions \ref{ass:HJB}(A2)--(A4)). This may narrow the class of HJB equations having the required regularity of viscosity solutions and hence restrict the scope of applications. However, our main goal is to show that the viscosity solution approach is viable, and we anticipate that the methodology developed here is robust enough to be extended to a broader class of Hamiltonians. In particular, we think that essentially the same approach should work in the case where the estimates in Assumptions \ref{ass:HJB}(A2),(A3) are more local and $\sigma$ is unbounded.

\iffalse
Moreover,
\begin{enumerate}[label=(\roman*)]
    \item $G:H\times \mathcal{P}_1(H) \to \mathbb{R}$ denotes the terminal cost;
    \item $L\phi(x) = \langle Ax, D\phi(x) \rangle + \frac12 \operatorname{Tr}[ \sigma(x) \sigma^*(x) D^2 \phi(x) ]$ for $\phi:H \to \mathbb{R}$, where $A$ is an unbounded operator on $H$ and $\sigma: H \to L_2(\Xi,H)$ maps into the space of Hilbert--Schmidt operators from some real, separable Hilbert space $\Xi$ into $H$. $L^*$ is the adjoint of $L$; \label{ii}
    \item $\mathcal{H}:H\times H\times \mathcal{P}_1(H) \to \mathbb{R}$ denotes the Hamiltonian and $\mathcal{H}_p$ denotes its Fr\'echet derivative with respect to the second argument.
\end{enumerate}

where
\begin{enumerate}[label=(\roman*)]
    \item $m_0\in \mathcal{P}_1(H)$, $G:H\times \mathcal{P}_1(H) \to \mathbb{R}$;
    \item $L$ is the operator formally defined by acting on a function $\phi :H\to \mathbb{R}$ via
    \begin{equation}
        L\phi(x) = \langle Ax, D\phi(x) \rangle + \frac12 \operatorname{Tr}[ \sigma(x) \sigma^*(x) D^2 \phi(x) ],
    \end{equation}
    where $A:\mathcal{D}(A) \subset H \to H$ is a closed, densely defined linear operator and $\sigma: H \to L_2(\Xi,H)$; \label{ii}
    \item $L^*$ is, formally, the adjoint of $L$ and the operator $\operatorname{div}$ is, formally, the negative of the adjoint of the gradient;
    \item $\mathcal{H}:H\times H\times \mathcal{P}_1(H) \to \mathbb{R}$, and $\mathcal{H}_p$ denotes the Fr\'echet derivative of $\mathcal{H}$ with respect to the second argument.
\end{enumerate}
\fi

In the finite dimensional setting, mean field games were introduced simultaneously and independently by Lasry and Lions, and Huang, Caines and Malham{\'e} in \cite{huang_caines_malhame_2007,lasry_lions_2006,lasry_lions_2006_2,lasry_lions_2007}. Since these seminal works, the theory has been the subject of extensive study and the literature has expanded rapidly. For comprehensive introductions and overviews, we refer to \cite{carmona_delarue_2018,carmona_delarue_2018_2,cardaliaguet_delarue_lasry_lions_2019, cardaliaguet_porretta_2020,gomes_saude_2014}. By contrast, the literature on mean field games in infinite dimensions and on mean field games with degenerate diffusion remains relatively limited.

An early contribution to mean field games in infinite dimensions is \cite{fouque_zhang_2018}, where a linear-quadratic mean field game with delay is studied. Using a lifting procedure, the problem is reformulated as an infinite-dimensional MFG. The authors derive the associated master equation and obtain an explicit solution, which relies crucially on the linear–quadratic structure of the model. More recently, the linear-quadratic case has been further investigated in \cite{federico_ghilli_gozzi_2025,firoozi_kratsios_yang_2025,ghilli_ricciardi_2025,liu_firoozi_2025,liu_firoozi_2026}. To the best of our knowledge, the only work that goes beyond the linear-quadratic case is \cite{federico_gozzi_swiech_2026}, where existence and uniqueness are established for an MFG system similar to \eqref{HJB}--\eqref{FP}. The analysis in \cite{federico_gozzi_swiech_2026} is carried out in the framework of mild solutions, which requires strong assumptions on the diffusion coefficient $\sigma$ and the unbounded operator $A$ in order to obtain suitable smoothing properties of the associated Ornstein--Uhlenbeck semigroup. In particular, these assumptions are not satisfied in the setting considered here, so their approach does not apply to our problem. We will discuss the relationship between their results and ours in more detail at the end of the introduction below.

In the finite-dimensional setting, there exists an extensive literature concerning first order MFG systems, see e.g. \cite{alharbi_gomes_2026,cardaliaguet_2015,cardaliaguet_graber_2015,cardaliaguet_meszaros_santambrogio_2016,cardaliaguet_porretta_2020,cirant_nurbekyan_2018,graber_2014,griffin-pickering_meszaros_2022,munoz_2022,santambrogio_2020} and the references therein. In particular, the classical vanishing viscosity method to obtain solutions to first order MFG systems is described in \cite{cardaliaguet_porretta_2020}. In contrast, the theory for degenerate second order systems has received significantly less attention. To the best of our knowledge, the first work addressing such a problem is \cite{cardaliaguet_graber_porretta_tonon_2015}, where existence and uniqueness of suitably defined weak solutions are established. Their analysis exploits the fact that the MFG system can be interpreted as the optimality condition for two convex problems. Subsequently, in \cite{ferreira_gomes_2018,ferreira_gomes_tada_2021}, the authors construct solutions to degenerate MFG systems via a higher-order elliptic regularization: First, they consider non-degenerate approximating problems, and then prove the existence of weak solutions to the original system using monotonicity methods. More recently, in \cite{chowdhury_jakobsen_krupski_2025}, the authors study a class of degenerate MFG systems imposing a certain structure on the system that still guarantees the existence of a classical solution to the HJB equation.

Our general approach follows that of \cite{federico_gozzi_swiech_2026} and is based on the classical strategy using Tikhonov's fixed point theorem. The main difference is that we use the theory of viscosity solutions. The key difficulty to overcome here is proving uniqueness of solutions to the degenerate Fokker--Planck equation \eqref{FP} when the term $\mathcal{H}_p(x,Dv(t,x),m(t))$ is fixed, thereby reducing the equation to a linear form. Linear Fokker--Planck equations in infinite-dimensional spaces have been studied extensively, see e.g.\cite{bogachev_daprato_roeckner_2009,bogachev_daprato_roeckner_2010,bogachev_daprato_roeckner_2011,daprato_flandoli_roeckner_2013,roeckner_zhu_zhu_2014,wiesinger_2013}. While the works \cite{bogachev_daprato_roeckner_2009,bogachev_daprato_roeckner_2011,wiesinger_2013} address the uniqueness of solutions of such equations with degenerate diffusion, they all work, among other conditions, under the assumption that $\sigma(x) \equiv \sigma$ is independent of $x\in H$ and $\mathrm{e}^{tA} (H) \subset ( \int_0^t \mathrm{e}^{sA} \sigma \sigma^* \mathrm{e}^{sA^*} \mathrm{d}s )^{1/2}(H)$, which severely restricts the choice of the unbounded operator $A$ when the noise is degenerate.

We now provide a more detailed account of our results and methodology. As described above, the first step involves solving the linear Fokker--Planck equation \eqref{FP} when $\mathcal{H}_p(x,Dv(t,x),m(t))$ is fixed; this analysis is the subject of Section \ref{sec:LFP}. To prove uniqueness of weak solutions (see Definition \ref{definition_weak_solution} for the precise meaning) we employ a classical approach of constructing sufficiently smooth solutions for the adjoint equation (see equation \eqref{equation_1} below) for a broad class of right-hand sides. This adjoint equation is a linear degenerate second order equation. Due to the degeneracy, classical or sufficiently regular solutions are generally not available. Instead, we work with viscosity solutions. In order to obtain the necessary regularity, we employ three approximation procedures: (i) replacing the unbounded operator with its Yosida approximation, (ii) projecting onto finite-dimensional subspaces, and (iii) applying a classical finite-dimensional mollification. The existence of a solution for the linear Fokker--Planck equation is then obtained via the corresponding SDE.

Once the well-posedness of the linear Fokker–Planck equation is established, we turn to the well-posedness of the MFG system \eqref{HJB}--\eqref{FP}. A natural way to define generalized solutions $(v,m)$ to the MFG system is to require that $v$ is a viscosity solution of the HJB equation and has a Fr\'echet derivative in the spatial variable that is Lipschitz continuous, and that $m$ is a weak solution of the linear Fokker--Planck equation obtained from equation \eqref{FP} when $\mathcal{H}_p(x,Dv(t,x),m(t))$ is fixed. However, here we require a little more, namely that $v(t,\cdot)\in C^{1,1}(H_{-1})$, $t\in [0,T]$, where $H_{-1}$ is another Hilbert space such that $H$ is compactly embedded in $H_{-1}$. See the beginning of Section \ref{sec:notass} for the precise construction of $H_{-1}$ and see Definition \ref{definition_MFG_solution} for our definition of a solution to the MFG system. We remark that the existence of viscosity solutions $v$ of HJB equations with the required regularity was established in \cite{defeo_swiech_wessels_2025} under certain assumptions; accordingly, we impose the existence of such solutions as an assumption in the present paper (see Section \ref{section_C11_regularity} for details). In Section \ref{sec:existence}, solutions are then constructed via Tikhonov’s fixed point theorem, which relies on compactness. In infinite-dimensional spaces, compactness is significantly more difficult to obtain; in our setting, it is recovered by working in the space $H_{-1}$.

In Section \ref{section_MFG_Uniqueness}, the uniqueness of solutions is established via an argument based on the Lasry--Lions monotonicity condition and separability assumptions. In finite-dimensional settings, this approach is classical. For sufficiently regular solutions, the argument proceeds as follows: given two solutions $(v^1,m^1)$ and $(v^2,m^2)$ of the MFG system, one uses $v^1-v^2$ as a test function for the equation satisfied by $m^1-m^2$, and concludes uniqueness under suitable structural assumptions. In the present framework, however, the low regularity of MFG solutions prevents a direct application of this method. To overcome this difficulty, we perform the above computation at the approximate level, and then pass to the limit. In this respect we follow \cite{federico_gozzi_swiech_2026}. However, instead of deriving the Kolmogorov equation that is satisfied by $v^1-v^2$ and approximating its solution, we use the regularization procedure that we already employed in the proof of uniqueness for the Fokker--Planck equation and apply it separately to two solutions $v^1$ and $v^2$ of the original HJB equation. Uniqueness of the MFG system is then recovered by following the same strategy as in the finite-dimensional, smooth setting. 

In Sections \ref{sec:examples} and \ref{section_optimal_advertising}, we discuss various examples. In particular, as an application, we analyze an optimal advertising problem in a large market that incorporates memory effects.

Finally, we compare our results in more detail with those of \cite{federico_gozzi_swiech_2026}. In that work, the analysis is carried out in the framework of mild solutions under the assumptions that $\sigma = I$ and that $A$ is a closed, densely defined, negative, self-adjoint operator such that $(-A)^{-1+\delta}$ is trace class for some $\delta > 0$. These conditions are used to apply the uniqueness result of \cite{bogachev_daprato_roeckner_2011} for weak solutions to the linear Fokker--Planck equation, to obtain the required smoothing properties of the associated Ornstein--Uhlenbeck semigroup, and to ensure compactness of certain sets of measures arising in the analysis. In contrast, we work in the framework of viscosity solutions. Here, $\sigma$ is allowed to be degenerate and $A$ can, in principle, be any maximal dissipative operator. Thus, our assumptions are of a different nature and apply to a broader class of models. However, the two sets of assumptions are essentially disjoint, and the results of \cite{federico_gozzi_swiech_2026} and the present work should be viewed as complementary. The use of mild solutions in \cite{federico_gozzi_swiech_2026} allows for minimal assumptions on the Hamiltonian $\mathcal{H}$. By contrast, the viscosity approach requires working in the weaker space $H_{-1}$, defined via a suitable compact operator $B$ associated with $A$. This leads to continuity requirements on $\mathcal{H}$ with respect to the $H_{-1}$-norm and imposes some restrictions on the operator $A$. In our setting, compactness is recovered by working in the space $\mathcal{P}_1(H_{-1})$. Moreover, we require that viscosity solutions $v$ of \eqref{HJB} satisfy $v(t,\cdot) \in C^{1,1}(H_{-1})$. This appears to be a minimal regularity condition for the applicability of the viscosity framework, and it is satisfied in a number of relevant situations. Overall, the results of the present work significantly advance the understanding of MFG systems in Hilbert spaces.

\section{Notation and Assumptions}\label{sec:notass}

Throughout the manuscript, $H$ is a real, separable Hilbert space with inner product $\langle \cdot,\cdot \rangle$ and norm $\|\cdot\|$, and $A:\mathcal{D}(A) \subset H \to H$ is a linear, closed, densely defined maximal dissipative operator. Moreover, $B:H\to H$ is a linear, bounded, self-adjoint, strictly positive, compact operator such that $A^*B$ is bounded. We identify $H$ with its dual. We use the following notation.
\begin{itemize}
 \item For real, separable Hilbert spaces $V$ and $W$, $L(V,W)$ denotes the space of bounded linear operators from $V$ to $W$ equipped with the operator norm $\|\cdot\|_{L(V,W)}$. If $V=W$ we will just write $L(V)$. We write $L_2(V,W)$ to denote the space of Hilbert--Schmidt operators in $L(V,W)$ and use $\|\cdot\|_{L_2(V,W)}$ to denote its norm. We denote by $S(V)$ the space of self-adjoint operators in $L(V)$. We denote by $A_n = nA(nI-A)^{-1}$, $n=1,2,\dots$, the Yosida approximations of $A$.

 For a Fr\'echet differentiable function $\varphi: V \to \mathbb{R}$, we denote by $D\varphi(x)$ its derivative at $x$, identified with the gradient in $V$ via the Riesz representation theorem. We write $D_V \varphi(x)$ to emphasize the space and the corresponding inner product used to define this gradient.
  
\item We define the space $H_{-1}$ as the completion of $H$ with respect to the norm $\|x\|_{-1}^2 := \langle Bx,x\rangle$.  $H_{-1}$ is a Hilbert space with the inner product $\langle x,y\rangle_{-1}:= \langle B^{1/2}x,B^{1/2}y\rangle$. For more details, see \cite[Section 3.1.1]{fabbri_gozzi_swiech_2017}. There exists an orthonormal basis $(e_k)_{k\in \mathbb{N}}$ of $H$ consisting of eigenvectors of $B$. We arrange them in order of decreasing eigenvalues $(\lambda_k)_{k\in\mathbb{N}}$. We denote $H_N={\rm span}\{e_1,\dots, e_N\}$ and let $P_N$ be the orthogonal projection in $H$ onto $H_N$. We denote $Q_N=I-P_N$ and $H_N^\perp=Q_N(H)$. For $x\in H$ we will write $x=(x_N,x_N^\perp)$ and for $x_N\in H_N$ we will sometimes write $x_N=(x_N,0)$. Since $H_N$ and $H_N^\perp$ are also orthogonal in $H_{-1}$, $P_N$ and $Q_N$ extend to orthogonal projections in $H_{-1}$ onto $H_N$ and the completion of $H_N^\perp$. We will also denote these projections in $H_{-1}$ by $P_N$ and $Q_N$. For $n,N\in \mathbb{N}$, we define the operators $A_{n,N}=P_NA_nP_N$. 
If $X\in S(H_N)$ we may identify it with the operator $XP_N=P_NXP_N\in S(H)$, that is we consider it as the operator in $S(H)$ in the block form
\[
\begin{bmatrix}
X & 0\\
0 & 0
\end{bmatrix}.
\]
With this identification, if $\varphi\in C^2(H_N)$, then the function $\psi(x)=\varphi(P_Nx)$ is in $C^2(H)$ and $D^2\psi(x)=P_ND^2\varphi(P_Nx)P_N$, that is
\[
D^2\psi(x)=
\begin{bmatrix}
D^2\varphi(P_Nx) & 0\\
0 & 0
\end{bmatrix}.
\]
Moreover, if $\varphi\in C^2(H)$, then $D^2_{x_N}\varphi(x)=P_ND^2\varphi(x)P_N$, which when restricted to $H_N$ is an operator in $S(H_N)$.

    \item We denote by $\mathcal{P}(H)$ the set of all Borel probability measures on $H$; for $p\in [1,\infty)$, we denote by $\mathcal{P}_p(H)$ the set of all $\mu\in \mathcal{P}(H)$ with finite $p$-th moment, i.e., $\int_{H} \|x\|^p \mu(\mathrm{d}x) < \infty$. Let $\mathbf{d}_{p}:\mathcal{P}_p(H) \times \mathcal{P}_p(H) \to \mathbb{R}$ denote the $p$-th Wasserstein distance, i.e.,
    \begin{equation}\label{definition_wasserstein_distance}
        \mathbf{d}_p(\mu,\nu) = \inf_{\gamma \in \Gamma(\mu,\nu)} \left ( \int_{H\times H} \| x-y \|^p \gamma(\mathrm{d}x,\mathrm{d}y) \right )^{\frac1p},
 \end{equation}
    where $\Gamma(\mu,\nu)$ is the set of all Borel probability measures on $H \times H$ with first and second marginals $\mu$ and $\nu$, respectively. 
    We notice that if $(\Omega, \mathcal{F},\mathbb{P})$ is a probability space then
    \[
   \mathbf{d}_p(\mu,\nu)\leq \inf \left \{ \left ( \int_{\Omega} \|X(\omega)-Y(\omega)\|^p \mathrm{d}\omega \right )^{\frac1p}: \; X,Y\in L^p(\Omega;H), \; {\mathcal L}(X) = \mu, \; {\mathcal L}(Y)= \nu \right \}.
   \]
   Analogously, $\mathbf{d}_{p,-1}: \mathcal{P}_p(H_{-1}) \times \mathcal{P}_p(H_{-1}) \to \mathbb{R}$ denotes the $p$-th Wasserstein distance on $\mathcal{P}_p(H_{-1})$, i.e., the space $H$ and its norm above are replaced by the space $H_{-1}$ and its norm, and $\Gamma(\mu,\nu)$ is replaced by the set of $\Gamma_{-1}(\mu,\nu)$ of Borel probability measures on $H_{-1} \times H_{-1}$ with marginals $\mu$ and $\nu$. We denote by $\mathcal{M}(H_{-1})$ the space of finite signed Borel measures on $H_{-1}$ endowed with the weak topology. We remark that every measure in $\mathcal{P}_p(H)$ or in $\mathcal{P}_p(H_{-1})$ is tight and has compact inner approximation property for every Borel set, see e.g. \cite[Chapter II, Theorems 3.1 and 3.2]{parthasarathy_1967}. Also, since $\mathcal{B}(H_{-1}) \cap H = \mathcal{B}(H)$, see \cite[Lemma 1.17]{fabbri_gozzi_swiech_2017}, every $\mu\in \mathcal{P}_1(H)$ can be considered an element of $\mathcal{P}_1(H_{-1})$ by setting $\mu(F)= \mu(F \cap H)$ for $F \in \mathcal{B}(H_{-1})$.
   
   \item
   We define the sets $S,\hat S, \tilde S$:
\[
S:=\bigg\{m:[0,T]\to \mathcal{P}_1(H): m\in C([0,T];\mathcal{P}_1(H_{-1}))\,\,\text{and}\,\,\sup_{t\in[0,T]}\int_H\|x\| m(t,\mathrm{d}x)<+\infty\bigg\},
\]
 \[
  \hat S:=C([0,T];(\mathcal{P}_1(H), {\mathbf{d}_{1}})),\quad \tilde S:=C([0,T];(\mathcal{P}_1(H_{-1}), {\mathbf{d}_{1,-1}})).
  \]
The sets $\hat S, \tilde S$ are complete metric spaces equipped, respectively, with the metrics
\[
\rho_{1,\infty}(m_1,m_2):=\max_{t\in[0,T]}{\mathbf{d}_{1}}(m_1(t),m_2(t)),\quad \rho_{-1,\infty}(m_1,m_2):=\max_{t\in[0,T]}{\mathbf{d}_{1,-1}}(m_1(t),m_2(t)).
\]
We notice that if $m\in C([0,T];\mathcal{P}_1(H))$ and $t_n\to t$ then $m(t_n)$ converges weakly to $m(t)$ so, since $C_b(H_{-1})\subset C_b(H)$, $m(t_n)$ converges weakly to $m(t)$ when measures are considered to be elements of $\mathcal{P}_1(H_{-1})$. It then easily follows from \cite[Proposition 7.1.5]{ambrosio_gigli_savare_2008} that $\mathbf{d}_{1,-1}(m(t_n),m(t))\to 0$. Hence, $m\in C([0,T];\mathcal{P}_1(H_{-1}))$.

We also notice that if $m\in S$ then $m(t)\mathrm{d}t$ is a Borel measure on $[0,T]\times H_{-1}$ and hence also on $[0,T]\times H$. To show this, let $m\in S$. We first repeat the argument from \cite{federico_gozzi_swiech_2026} that $t\mapsto m(t)(F)$ is measurable for each $F \in B(H_{-1})$. Indeed, this map is the composition of the map $t\mapsto m(t)$ from $[0,T]$ to $(\mathcal{P}_1(H_{-1}),\mathbf{d}_{1,-1})$ and the map $\mu\mapsto \mu(F)$ which we consider as a map from $(\mathcal{P}_1(H_{-1}), \text{weak topology})$ to $\mathbb{R}$. The first map is continuous by definition of $S$ and, since the $\mathbf{d}_{1,-1}$-topology is stronger than the weak topology, it suffices to show that the second map is measurable. Since $\mu\mapsto \mu(F)$ is upper semicontinuous for every closed set $F$ (see e.g. \cite{ambrosio_gigli_savare_2008}, page 110), the measurability for every $F \in \mathcal{B}(H_{-1})$ is a consequence of \cite[Proposition 7.25]{bertsekas_shreve_1978}. Now let $G\in {\mathcal B}([0,T]\times H_{-1})$. We know that the function $t\to m(t)(G_t)$, where $G_t=\{x\in H_{-1}:(t,x)\in G\}$, is measurable for all rectangular sets $G=I\times F$, where $I\in \mathcal{B}(\mathbb R), F\in \mathcal{B}(H_{-1})$. Thus the measurability for any Borel set $G$ follows from the monotone class theorem. Therefore $\nu=m(t)\mathrm{d}t$ defined by
\[
\nu(G)=\int_0^Tm(t)(G_t) \mathrm{d}t
\]
is a Borel measure on $[0,T]\times H_{-1}$. Since $m\in S$ is supported on $[0,T]\times H$, $\nu$ is also a Borel measure on $[0,T]\times H$.

\item
For a set $\mathcal{D}$ and functions $v:\mathcal{D}\to \mathbb{R}$ and $\tilde{v}:\mathcal{D}\to H$, we denote (with a slight abuse of notation) $\|v\|_{\infty} = \sup_{x\in \mathcal{D}} |v(x) | $ and $\|\tilde{v}\|_{\infty} = \sup_{x\in \mathcal{D}} \| \tilde{v}(x) \|$, respectively. For a function $v:\mathcal{D}\to H_{-1}$, we denote $\|v\|_{-1,\infty} = \sup_{x\in \mathcal{D}} \| v(x) \|_{H_{-1}}$.

\item For a real, separable Hilbert space $(V,\|\cdot\|_V)$, we denote by $C^{1,1}_b(V)$ the space of bounded functions $v:V\to\mathbb{R}$ which are Fr\'echet differentiable and whose Fr\'echet derivatives are bounded and Lipschitz continuous. $C^{1,1}_b(V)$ is a Banach space equipped with the norm
  \[
  \|v\|_{C^{1,1}_b(V)}:=\|v\|_\infty+\|Dv\|_\infty+\sup_{x,y\in V,x\not= y}\frac{\|Dv(x)-Dv(y)\|_V}{\|x-y\|_V}.
  \]
  We denote 
  \[
  \begin{split}
C_{b}^{1,2}([0,T]\times V):=\Big\{\varphi\in C^{1,2}([0,T]\times V)&: \partial_t\varphi\in C_{b}([0,T]\times V), D\varphi\in C_{b}([0,T]\times V;V), 
\\
&D^2\varphi\in C_{b}([0,T]\times V;L(V))\Big\}.
\end{split}
\]
\end{itemize}

We use the notion of $B$-continuous viscosity solutions. This type of viscosity solution was first introduced for first order equations in \cite{crandall_lions_1990,crandall_lions_1991} and then generalized to second order equations in \cite{swiech_1994}. The full account of the basic theory of $B$-continuous viscosity solutions can be found in \cite[Chapter 3]{fabbri_gozzi_swiech_2017}. In particular, we refer to \cite[Definition 3.35]{fabbri_gozzi_swiech_2017} for the definition of the ($B$-continuous) viscosity solution used in this paper. The notion of $B$-continuity is also explained in detail in \cite[Section 3.1.1]{fabbri_gozzi_swiech_2017}. For the basic definition, see Appendix \ref{section_appendix}.

In addition to \eqref{HJB}, we consider the two approximating equations
\begin{equation}\label{HJBn}
    \partial_t v + \langle A_nx,Dv \rangle +\frac12 \operatorname{Tr} [ \sigma(x) \sigma^*(x) D^2 v ] - \mathcal{H}(x,Dv,m(t)) = 0,\quad v(T,\cdot) = G(\cdot,m(T)), 
\end{equation}
and
 \begin{equation}\label{HJBnN}
    \partial_t v + \langle A_{n,N} x,Dv \rangle + \frac12 \operatorname{Tr} [ \sigma(P_Nx) \sigma^*(P_Nx)P_N D^2 v P_N ] - \mathcal{H}(P_Nx,P_NDv,m(t)) = 0,\quad v(T,\cdot) = G(P_N\cdot,m(T)). 
\end{equation}

We impose the following assumptions.
\begin{assumption} 
\label{ass:HJB}
\begin{enumerate}[label=(A\arabic*)]
   \item
    The linear operator $A:\mathcal{D}(A) \subset H \to H$ is a closed, densely defined maximal dissipative operator, and the weak $B$-condition is satisfied, that is, there exist a strictly positive $B\in S(H)$ and $c_0\geq 0$ such that
    \[
        -A^* B+c_0 B \geq 0.
    \]
The operator $B$ is compact and $A^* B^{1/2} : H\to H$ is bounded.
    \item There exists $L\geq0$ such that $\forall x,y\in H, p,p'\in H, \ \mu,\mu'\in\mathcal{P}_1(H)$,
\[
|\mathcal{H}(x,p,\mu)-\mathcal{H}(y,p',\mu')|\leq L\left (\|x-y\|_{-1}+\mathbf{d}_{1,-1}(\mu,\mu') \right )(1+\|p\|+\|p'\|)+L\|p-p'\|.
\]
Moreover, the function $\mathcal{H}(\cdot,0,\cdot)$ is bounded.
\item $\mathcal{H}$ is Fr\'echet differentiable with respect to the second variable, $\mathcal{H}_{p}$ is bounded on $H\times B_R(0)\times \mathcal{P}_1(H)$ for every $R>0$, and there exists $C>0$ such that
\[
\|\mathcal{H}_{p}(x,p,\mu)-\mathcal{H}_{p}(y,p',\mu')\|\leq C(\|x-y\|_{-1}+\|p-p'\|+ \mathbf{d}_{1,-1}(\mu,\mu')), \ \ \ \ \forall x,y\in H, \ p,p'\in H, \ \mu,\mu'\in\mathcal{P}_1(H).
\]
\item
Let $\Xi$ be a real, separable Hilbert space. The function $\sigma:H\to L_2(\Xi,H)$ is bounded and there is a constant $C\geq 0$ such that
\begin{equation}
    \label{assumption2}
     \|\sigma(x)-\sigma(y)\|_{L_2(\Xi,H)} \leq C\|x-y\|_{-1}\quad\forall \,x,y\in H.
\end{equation}

\item
The function $G:H\times \mathcal{P}_1(H) \to \mathbb{R}$ is bounded, it is uniformly continuous in the $\|\cdot\|_{-1}\times \mathbf{d}_{1,-1}$ metric on bounded subsets of $H\times \mathcal{P}_1(H)$ and for every $\mu\in \mathcal{P}_1(H)$, $G(\cdot,\mu)$ extends to a function in $C^{1,1}_b(H_{-1})$. Moreover, there exists $\hat M\geq 0$ such that for every $\mu\in \mathcal{P}_1(H)$
\[
\|G(\cdot,\mu)\|_{C^{1,1}_b(H_{-1})}\leq \hat M.
\]

\item
Let $n\geq 2c_0, N\in \mathbb{N}$, let $m\in S$ and let $G$ satisfy (A5). Let $v,v_n, v_{n,N}$ be the viscosity solutions of \eqref{HJB}, \eqref{HJBn} and \eqref{HJBnN}, respectively (see comments below). For every $t\in[0,T]$ the functions $v(t,\cdot),v_n(t,\cdot), v_{n,N}(t,\cdot)$ extend to functions in $C^{1,1}_b(H_{-1})$, and there exists a constant $\hat M_1\geq 0$ such that
\[
\sup_{t\in[0,T]} \left(\|v(t,\cdot)\|_{C^{1,1}_b(H_{-1})}+\|v_n(t,\cdot)\|_{C^{1,1}_b(H_{-1})}+\|v_{n,N}(t,\cdot)\|_{C^{1,1}_b(H_{-1})}\right)\leq \hat M_1.
\]
\end{enumerate}
\end{assumption}

\begin{remark}\label{remark_convergence_Dv}
\begin{enumerate}[label=(\roman*)]
    \item We note that if Assumption \ref{ass:HJB}(A1) is satisfied, the Yosida approximations $A_n$ and the operators $A_{n,N}$ satisfy the weak $B$-condition with constant $2c_0$ for $n\geq 2c_0, N\in \mathbb{N}$, see \cite[page 264]{fabbri_gozzi_swiech_2017}.
    \item If Assumptions \ref{ass:HJB}(A1),(A2),(A4),(A5) are satisfied (in fact only Lipschitz continuity of $G(\cdot,\mu)$ with respect to the $H_{-1}$-norm is needed) then, by \cite[Theorem 3.50]{fabbri_gozzi_swiech_2017} and \cite[Theorem 3.86]{fabbri_gozzi_swiech_2017}, for every $m\in S$, equations \eqref{HJB}, \eqref{HJBn} and \eqref{HJBnN}, $n\geq 2c_0,N\geq 1$, have unique (within the class of bounded $B$-continuous functions) bounded viscosity solutions $v, v_n$ and $v_{n,N}$, respectively, which are bounded uniformly in $n,N$. We also have that $v_{n,N}\to v_n$ as $N\to\infty$, and $v_n\to v$ as $n\to \infty$ uniformly on bounded subsets of $[0,T]\times H$. Moreover, since $A_n, A_{n,N}$ are bounded, it follows from the construction of $v_n, v_{n,N}$, that they are also the viscosity solutions of \eqref{HJBn} and \eqref{HJBnN} in the usual sense where we use the standard $C^{1,2}((0,T)\times H)$ class of test functions. We also remark that the equations \eqref{HJB}, \eqref{HJBn} and \eqref{HJBnN} have comparison principles.
    \item Suppose that Assumptions \ref{ass:HJB}(A1),(A2),(A4),(A5),(A6) hold and let $m_1,m_2\in S$. Let $v_1$ and $v_2$ be the viscosity solutions of \eqref{HJB} associated with $m_1$ and $m_2$, respectively. Then $v_1^\pm(t,x)=v_1(t,x)\pm\|G(\cdot,m_1(T))-G(\cdot,m_2(T))\|_\infty \pm L(1+2\hat M)\rho_{-1,\infty}(m_1,m_2)(T-t)$ is a viscosity super/subsolution of \eqref{HJB} associated with $m_2$, and thus $v_1^-\leq v_2\leq v_1^+$. In particular, this implies that if $\rho_{-1,\infty}(m_k,m)\to 0$ as $k\to\infty$, then the corresponding viscosity solutions $v_k$ of \eqref{HJB} associated with $m_k$ converge uniformly to the viscosity solution of \eqref{HJB} associated with $m$. Following the argument of \cite[Lemma A.1]{defeo_swiech_wessels_2025}, it is then standard to observe that $Dv_k\to Dv$ uniformly on $[0,T]\times H$.
    \item If $A$ is a linear densely defined maximal dissipative operator in $H$ then $B:=((-A+I)(-A^*+I))^{-1}=(-A^*+I)^{-1}(-A+I)^{-1}$ always satisfies the weak $B$-condition with $c_0 =1$ since
    \begin{equation}
        -A^*B+B=(-A^*+I)B=(-A+I)^{-1}\geq 0.
    \end{equation}
    Moreover, it follows from e.g. \cite[Proposition B.2]{fabbri_gozzi_swiech_2017} that $\text{Range}(B^{1/2})=\mathcal{D}(A^*)$ and hence $A^*B^{1/2}$ is bounded.
\end{enumerate}
\end{remark}

The operator $B$ above is compact in many interesting cases. Let us discuss here a second order differential operator. Another example of a pair $A,B$ satisfying Assumption \ref{ass:HJB}(A1) is discussed in Section \ref{section_optimal_advertising}. There, the operator $A$ is a transport-type differential operator that arises in the context of delay differential equations.

\begin{example}
    For instance, let ${\mathcal O}\subset {\mathbb R}^n$ be a bounded domain with smooth boundary and let
    \begin{equation}
    \begin{cases}
        A:= \sum_{i,j=1}^n \partial_i (a_{ij}\partial_j) + \sum_{i=1}^n b_i\partial_i+c\\
        \mathcal{D}(A) := H^1_0(\mathcal{O}) \cap H^2(\mathcal{O}),
    \end{cases}
    \end{equation}
    where $a_{ij}=a_{ji}\in C^{1}(\overline{\mathcal{O}})$, $i,j\in \{1,\dots,n\}$, satisfy
    \begin{equation}
        \sum_{i,j=1}^n a_{ij}\xi_i\xi_j\geq \theta|\xi|^2\quad \forall \xi\in\mathbb{R}^n
    \end{equation}
    in $\mathcal{O}$ for some $\theta >0$, and $b_i,c \in L^\infty(\mathcal{O})$ for $i \in \{1,\dots,n\}$. Then, if $c \leq - C_0$ for some sufficiently large constant $C_0$ depending on $b_i$ and $a_{ij}$, $i,j\in \{1,\dots,n\}$, $A$ is maximal dissipative. Moreover, it is well known that $(-A+I)^{-1}: L^2({\mathcal O})\to H^2(\mathcal{O})$ is bounded and since the embedding of $H^1_0(\mathcal{O})$ into $L^2({\mathcal O})$ is compact, $(-A+I)^{-1}:L^2(\mathcal{O})\to L^2(\mathcal{O})$ is compact. Hence $B:=((-A+I)(-A^*+I))^{-1}$ is compact in $L(H)$, for $H=L^2(\mathcal{O})$.
\end{example}

\section{The Linear Fokker--Planck Equation: Weak Solutions}\label{sec:LFP}

In this section, we study a linear Fokker--Planck equation
\begin{equation}\label{FP_linear}
    \partial_t m(t) - L^* m(t) - \operatorname{div} \left (w(t,x) m(t) \right ) = 0,\quad m(0) = m_0.
\end{equation}
%where $L^*$ is the \textcolor{red}{formal} adjoint of
%\begin{equation}
%    L\phi(x) = \langle Ax, D\phi(x) \rangle + \frac12 \operatorname{Tr}[ \sigma(x) \sigma^*(x) D^2 \phi(x)].
%\end{equation}
The precise meaning of \eqref{FP_linear} is explained in Definition \ref{definition_weak_solution} below. We note that this equation arises from \eqref{FP} by setting $w(t,x) = \mathcal{H}_p(x,Dv(t,x),m(t))$. We make the following assumption.

\begin{assumption}\label{Assumption:1}
The function $w:[0,T]\times H\to H$ is continuous and bounded; furthermore, there is a constant $C_1 \geq 0$ such that
\begin{equation}
    \label{assumption1} \|w(t,x)-w(t,y)\| \leq C_1\|x-y\|_{-1}\quad\forall \,t\in [0,T],x,y\in H.
\end{equation}
\end{assumption}

We introduce the class of test functions
\[
    \mathcal{D}_T = \Big\{ \varphi \in C_{b}^{1,2}([0,T]\times H) : A^* D\varphi(t,x) \in C_b([0,T]\times H;H)\Big\}.
\]
Let $L_0$ be the operator defined on functions $\varphi \in \mathcal{D}_T$ by
\begin{equation}
    (L_0\varphi)(x) = \langle x,A^*D\varphi(x) \rangle + \frac12 \operatorname{Tr} [ \sigma(x) \sigma^*(x) D^2 \varphi(x) ].
\end{equation}

We denote by $S_{FP}$ the set of finite Borel measures $m$ on $[0,T]\times H$ of the form $m(\mathrm{d}t,\mathrm{d}x)=m(t,\mathrm{d}x)\mathrm{d}t$ such that $\int_0^T\int_H\|x\|m(t,\mathrm{d}x)\mathrm{d}t<\infty$. We recall that both $S$ and $\hat S$ are contained in $S_{FP}$. Moreover, every measure in $S_{FP}$ can be naturally extended to a Borel measure on $[0,T]\times H_{-1}$ by setting $m(F)=m(F\cap [0,T]\times H)$ for $F\in {\mathcal B}([0,T]\times H_{-1})$.

\begin{definition}\label{definition_weak_solution}
    A measure $m\in S_{FP}$ is a weak solution of equation \eqref{FP_linear} with initial condition $m_0 \in \mathcal{P}_1(H)$ if
    \begin{equation}
    \begin{split}
        &\int_H \varphi(t,x) m(t,\mathrm{d}x) - \int_H \varphi(0,x) m_0(\mathrm{d}x)\\
        &= \int_0^t \left ( \int_H \left [ \partial_t \varphi(s,x) + L_0 \varphi(s,x) - \langle w(s,x), D\varphi(s,x) \rangle \right ] m(s,\mathrm{d}x) \right ) \mathrm{d}s
    \end{split}
    \end{equation}
    for every $t\in [0,T]$ and $\varphi \in \mathcal{D}_T$.
\end{definition}

We note that if 
$\varphi\in \mathcal{D}_T$, then the function $f(s,x):=\partial_t \varphi(s,x) + L_0 \varphi(s,x) - \langle w(s,x), D\varphi(s,x)\rangle$ is continuous on $[0,T]\times H$ and satisfies the growth condition $|f(s,x)|\leq C(1+\|x\|)$. Hence $f\in L^1([0,T]\times H;m)$ and the integrals above are well-defined.

%Hence $g(s):=\int_H  f(s,x)m(s)(\mathrm{d}x)$ is well defined if $m\in S$. We will show that $g$ is Borel measurable.

%We first approximate $f$ by functions $f_m\in UC_b([0,T]\times H)$ such that $f_m\to f$ pointwise and also $|f_m(s,x)|\leq C(1+\|x\|)$ for $m=1,2,\dots.$ Then we %define for $n=1,2,\dots$ the functions $f_{m,n}(t,x):=f_m(t,P_nx)$. We have $f_{m,n}\in UC_b([0,T]\times H_{-1})$. Since convergence in $d_{1,-1}$ implies weak %convergence, it is easy to see that the functions $g_{m,n}(s):=\int_H f_{m,n}(s,x)m(s)(\mathrm{d}x)$ are continuous on $[0,T]$ and, by dominated convergence theorem, %converge to $g_{m}(s):=\int_H f_{m}(s,x)m(s)(\mathrm{d}x)$ for every $s\in[0,T]$. Hence the functions $g_m$ are Borel measurable. Applying dominated convergence %theorem again we see that $g_{m}(s)$ converges for every $s$ to $g(s)$. Hence $g$ is Borel measurable.

\subsection{Uniqueness}

We prove the uniqueness of a weak solution via a variation of the classical method based on establishing the density of the range of the operator 
$\partial_t \varphi + L_0 \varphi - \langle w, D\varphi \rangle$. To this end, let us introduce the equation
\begin{equation}\label{equation_1}
    \partial_t u +  \langle Ax,Du \rangle +\frac12 \operatorname{Tr} [ \sigma(x) \sigma^*(x) D^2 u ]  - \langle w(t,x), Du \rangle = f(t,x),\quad u(T,\cdot) = 0.
\end{equation}
Due to the unbounded operator and the infinite dimensionality of the underlying space, we consider two approximating problems. For $n,N\geq 1$, we introduce the equations
\begin{equation}\label{equation_1n}
    \partial_t u + \langle A_nx,Du \rangle +\frac12 \operatorname{Tr} [ \sigma(x) \sigma^*(x) D^2 u ]  - \langle w(t,x), Du \rangle = f(t,x),\quad u(T,\cdot) = 0,
\end{equation}
and
\begin{equation}\label{equation_1nN}
    \partial_t u + \langle A_{n,N}x,Du \rangle + \frac12 \operatorname{Tr} [ \sigma(P_Nx) \sigma^*(P_Nx)P_N D^2 u P_N ]  - \langle P_Nw(t,P_Nx), Du \rangle = f(t,P_Nx),\quad u(T,\cdot) = 0.
\end{equation}

We first collect a few known results about these equations.

\begin{lemma}\label{lem:KolmogorovLip}
Let Assumptions \ref{ass:HJB}(A1),(A4) and Assumption \ref{Assumption:1} be satisfied.
Suppose that $f\in C_b([0,T]\times H)$ is such that there is $C_2\geq 0$ such that for all $t\in[0,T], x,y\in H$,
\begin{equation}\label{eq:KLipwf}
|f(t,x) - f(t,y)|\leq C_2\|x-y\|_{-1}.
\end{equation}
Then, there exists a unique (in the class of bounded $B$-continuous functions) bounded viscosity solution $u$ of equation \eqref{equation_1} and for every $n\geq 2c_0$ there exist unique bounded viscosity solutions $u_n$ of equation \eqref{equation_1n} and $u_{n,N}$ of equation \eqref{equation_1nN}. We have 
$\|u\|_\infty,\|u_n\|_\infty, \|u_{n,N}\|_\infty\leq T\|f\|_\infty$. Moreover, there exists $M\geq 0$ such that for all $R>0$, $t,s\in[0,T], x,y\in H,\|x\|,\|y\|\leq R$ and $n,N$ as above,
\begin{equation}\label{eq:KLip1}
|u(t,x) - u(s,y) | +|u_n(t,x) - u_n(s,y) |+ |u_{n,N}(t,x) - u_{n,N}(s,y) |\leq M\|x-y\|_{-1}+K_R |t-s|^{\frac{1}{2}}
\end{equation}
The constant $M$ only depends on $T,\|B\|_{L(H)},C_1,C_2$, and the constant $C$ from Assumption \ref{ass:HJB}(A4); the constants $K_R$ may also depend on $\|A^*B\|_{L(H)},\|f\|_\infty, \|w\|_\infty, \|\sigma\|_\infty$. Moreover the functions $u_{n,N}$ converge to $u_n$ as $N\to \infty$, and 
$u_{n}$ converge to $u$ as $n\to \infty$, uniformly on bounded subsets of $[0,T]\times H$.
\end{lemma}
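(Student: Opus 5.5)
The plan is to deduce existence, uniqueness and convergence from the general theory of $B$-continuous viscosity solutions in \cite[Chapter 3]{fabbri_gozzi_swiech_2017}, exactly as in Remark \ref{remark_convergence_Dv}(ii), and to obtain the uniform estimates \eqref{eq:KLip1} by hand through doubling of variables and comparison.

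We first recast \eqref{equation_1} as a terminal value HJB problem. Its Hamiltonian is $F(t,x,p,X)=-\frac12\operatorname{Tr}[\sigma\sigma^*(x)X]+\langle w(t,x),p\rangle+f(t,x)$, up to the sign convention. By Assumption \ref{Assumption:1}, (A4) and \eqref{eq:KLipwf}, this $F$ is Lipschitz in $p$, and its dependence on $x$ is controlled by $\|x-y\|_{-1}$ through the constants $C_1$, $C$, $C_2$. The terminal datum is $0$. Hence \cite[Theorem 3.50]{fabbri_gozzi_swiech_2017} (comparison) and \cite[Theorem 3.86]{fabbri_gozzi_swiech_2017} (existence via finite-dimensional approximation) give a unique bounded $B$-continuous viscosity solution $u$. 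The same theorems give $u_n$ and $u_{n,N}$, because by Remark \ref{remark_convergence_Dv}(i) the operators $A_n$ and $A_{n,N}$ satisfy the weak $B$-condition with constant $2c_0$ when $n\ge 2c_0$. The coefficients $P_Nw(t,P_N\cdot)$, $\sigma(P_N\cdot)$ and $f(t,P_N\cdot)$ satisfy the same bounds, since $\|P_Nx-P_Ny\|_{-1}\le\|x-y\|_{-1}$. The convergence $u_{n,N}\to u_n$ and $u_n\to u$ uniformly on bounded sets is the content of that existence construction, exactly as cited in Remark \ref{remark_convergence_Dv}(ii). The sup bound follows from comparison with the explicit sub/supersolutions $\pm(T-t)\|f\|_\infty$.

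For the spatial estimate we aim at $|u(t,x)-u(t,y)|\le M\|x-y\|_{-1}$ with $M$ independent of $n,N$. We consider
\[
\Phi(t,x,y)=u(t,x)-u(t,y)-e^{K(T-t)}\Big(M_0\sqrt{\varepsilon+\|x-y\|_{-1}^2}\Big)-\delta(\|x\|^2+\|y\|^2)-\frac{\eta}{t}
\]
or a variant of it, and apply the maximum principle for $B$-continuous solutions. The first-order term contributes $\langle A^*B(x-y),\ldots\rangle$-type quantities. The weak $B$-condition $\langle B(x-y),A(x-y)\rangle\le c_0\|x-y\|_{-1}^2$ makes this term favorable, up to the constant $c_0$, which is absorbed by $K$. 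The drift term is handled by $\|w(t,x)-w(t,y)\|\le C_1\|x-y\|_{-1}$, and the source term by $C_2\|x-y\|_{-1}$. The second-order term uses the standard doubling matrix inequality, giving $\operatorname{Tr}[(\sigma(x)-\sigma(y))(\sigma(x)-\sigma(y))^*B]\le \|B\|C^2\|x-y\|_{-1}^2$. Equivalently, one may realize the bound via the representation $u(t,x)=-\mathbb{E}\int_t^T f(s,X_s)\,ds$ along the SDE with the drift $-w$. That is valid for $u_n$ and $u_{n,N}$ because $A_n$ is bounded: Itô's formula for $\|X^x_s-X^y_s\|_{-1}^2$ with the weak $B$-condition and Gronwall give $\mathbb{E}\|X^x_s-X^y_s\|_{-1}^2\le e^{Cs}\|x-y\|_{-1}^2$. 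Then $|u_n(t,x)-u_n(t,y)|\le C_2Te^{CT}\|x-y\|_{-1}$, with constants depending only on $T,\|B\|,C_1,C_2,C$. The bound for $u$ then follows by passing to the limit.

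For time regularity we use the Lipschitz bound and dynamic programming (or the SDE). We have $|u(t,x)-u(s,x)|\le \|f\|_\infty|t-s|+M\,\mathbb{E}\|X_s^{t,x}-x\|_{-1}$. The key estimate is $\mathbb{E}\|X_s-x\|_{-1}\le K_R|t-s|^{1/2}$ for $\|x\|\le R$. It is obtained from Itô's formula for $\|X_s-x\|_{-1}^2$, where the unbounded term is $\langle A_nX,B(X-x)\rangle$. This term is bounded by $c_0\|X-x\|_{-1}^2+|\langle x,A_n^*B(X-x)\rangle|$, which involves $\|A^*B\|$, $R$, and moment bounds. The diffusion contributes $\|\sigma\|_\infty^2\|B\|\,|t-s|$. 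I expect the main obstacle to be this step: making these uniform estimates rigorous for the limit equations without classical solutions, i.e. ensuring that the constants in the Yosida and projection approximations are uniform (through $\|A_n^*B\|\le C\|A^*B\|$). In a purely viscosity framework the same bound is obtained by comparison with barriers $u(s,x)\pm(M(\|y-x\|^2_{-1}/\epsilon+\epsilon)+C_\epsilon(s-t))$ and optimizing in $\epsilon$.
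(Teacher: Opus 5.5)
Your proposal is correct and follows the paper in substance. Well-posedness comes from the $B$-continuous viscosity theory of \cite{fabbri_gozzi_swiech_2017}, and \eqref{eq:KLip1} from the stochastic representation together with It\^o estimates for $\|X^x_s-X^y_s\|_{-1}^2$ and $\|X_s-x\|_{-1}^2$, using the weak $B$-condition and $\|A_n^*B\|_{L(H)}\le\|A^*B\|_{L(H)}$; this is exactly the content of \cite[Lemmas 3.20, 3.22]{fabbri_gozzi_swiech_2017} that the paper invokes.

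The differences are bookkeeping. The paper reads the bounds for $u$ directly off its representation \cite[Theorem 3.66]{fabbri_gozzi_swiech_2017} instead of passing to the limit from $u_n$. It also does not simply cite the existence construction for $u_{n,N}\to u_n\to u$. Instead it derives these convergences from the uniform bounds: Arzel\`a--Ascoli applies because bounded subsets of $[0,T]\times H$ are compact in $[0,T]\times H_{-1}$, and the limit is identified using the stability result \cite[Theorem 3.41]{fabbri_gozzi_swiech_2017} and uniqueness.
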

\begin{proof}
The existence of unique viscosity solutions to these equations follows from \cite[Theorem 3.66]{fabbri_gozzi_swiech_2017}. Estimate \eqref{eq:KLip1} is standard and follows easily from the stochastic representation formulas for $u,u_n$ and $u_{n,N}$, see \cite[Theorem 3.66]{fabbri_gozzi_swiech_2017} and \cite[Lemmas 3.20, 3.22]{fabbri_gozzi_swiech_2017}. Regarding convergence of $u_n$, since bounded and closed subsets of $[0,T]\times H$ are compact in $[0,T]\times H_{-1}$, using Arzel\`{a}--Ascoli Theorem we obtain that every subsequence of the sequence $(u_n)$ has a further subsequence which converges uniformly on bounded subsets of $[0,T]\times H$ to some function $\bar u$ which is bounded and satisfies \eqref{eq:KLip1}. It then follows from \cite[Theorem 3.41]{fabbri_gozzi_swiech_2017} that $\bar u$ is a bounded viscosity solution of \eqref{equation_1} and, since this equation has a unique viscosity solution, $\bar u$ must be equal to $u$. This implies convergence of the whole sequence $(u_n)$ to $u$. Similar arguments give the required convergence of $(u_{n,N})$ to $u_n$. We remark that the required convergence of $u_n$ to $u$ can also be easily deduced from the stochastic representation formulas for $u_n$ and $u$ since we have
\[
\lim_{n\to\infty}\sup\{\|\mathrm{e}^{tA}x-\mathrm{e}^{tA_n}x\|_{-1}:t\in[0,T],x\in H,\|x\|\leq 1\}=0.
\]
The latter is an easy consequence of compactness of $\{\|x\|\leq 1\}$ in $H_{-1}$ and the fact that $\|\mathrm{e}^{tA}x\|_{-1}+\|\mathrm{e}^{tA_n}x\|_{-1}\leq C(T)\|x\|_{-1}
$ for all $x\in H$, which follows from \cite[Lemma 3.19(ii)]{fabbri_gozzi_swiech_2017}.
\end{proof}
\begin{remark}\label{rem:Defvs}
\begin{enumerate}[label=(\roman*)]
    \item We note that the solutions $u,u_n$ and $u_{n,N}$ are the value functions for the associated SDE (there is no control here), see \cite[Theorem 3.66]{fabbri_gozzi_swiech_2017}. Since $A_n, A_{n,N}$ are bounded, it also follows that the value functions, and hence $u_n, u_{n,N}$, are the viscosity solutions of \eqref{equation_1n} and \eqref{equation_1nN} in the usual sense where we use the standard $C^{1,2}((0,T)\times H)$ class of test functions. Hence, the two definitions give the same viscosity solution for \eqref{equation_1n} and \eqref{equation_1nN}.
    \item Due to \eqref{eq:KLip1}, the functions $u,u_n,u_{n,N}$ extend to functions on $[0,T]\times H_{-1}$. 
\end{enumerate}
\end{remark}

\begin{lemma}\label{lem:Kolmogorov.C11}
Let the assumptions of Lemma \ref{lem:KolmogorovLip} be satisfied and suppose in addition there is $C_3\geq 0$ such that for all $t\in [0,T]$ and $x,y\in H$
\begin{equation}\label{eq:KLipa2}
        \| (D\sigma(x) - D\sigma(y))(x-y) \|_{L_2(\Xi,H_{-1})} \leq C_3 \|x-y\|_{-1}^2,
\end{equation}
\begin{equation}\label{eq:KLipa3}
\|(Dw(t,x)-Dw(t,y))(x-y)\|_{-1}\leq C_3\|x-y\|_{-1}^2,\quad \|Df(t,x)-Df(t,y)\|_{-1}\leq C_3\|x-y\|_{-1}.
\end{equation}
Then the viscosity solutions $u, u_n,u_{n,N}$ from Lemma \ref{lem:KolmogorovLip} satisfy
\begin{equation}\label{eq:KC11}
    \max_{t\in[0,T]}\left(\|u(t,\cdot)\|_{C^{1,1}_b(H_{-1})}+\|u_n(t,\cdot)\|_{C^{1,1}_b(H_{-1})} + \|u_{n,N}(t,\cdot)\|_{C^{1,1}_b(H_{-1})} \right) \leq L
\end{equation}
for some absolute constant $L\geq 0$ which depends on the constants from Lemma \ref{lem:KolmogorovLip} and $C_3$.
\end{lemma}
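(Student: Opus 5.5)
We plan to prove \eqref{eq:KC11} by establishing \emph{semiconcavity and semiconvexity} of $u(t,\cdot)$ (and of $u_n,u_{n,N}$) with respect to the $H_{-1}$-norm, uniformly in $t,n,N$. Combined with the Lipschitz bound \eqref{eq:KLip1}, this gives $C^{1,1}_b(H_{-1})$ regularity through the standard fact that a function on a Hilbert space which is both semiconvex and semiconcave with constant $K$ is $C^{1,1}$, with Lipschitz constant of the gradient controlled by $K$. This is the route of \cite{defeo_swiech_wessels_2025}, and here it is simpler because the equation is linear.

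\textbf{Step 1 (probabilistic representation).} By \cite[Theorem 3.66]{fabbri_gozzi_swiech_2017} (Remark \ref{rem:Defvs}(i)), we write
\[
u(t,x)=-\mathbb{E}\int_t^T f(s,X^{t,x}(s))\,\mathrm{d}s ,
\]
where $X^{t,x}$ is the mild solution of $\mathrm{d}X=(AX-w(s,X))\mathrm{d}s+\sigma(X)\mathrm{d}W$, $X(t)=x$. The analogous formulas hold for $u_n$ (with $A_n$) and for $u_{n,N}$ (with $A_{n,N}$, $P_N\sigma(P_N\cdot)$, $P_Nw(\cdot,P_N\cdot)$). Since $A_n$ and $A_{n,N}$ are bounded, we first prove the estimate for $u_n$ and $u_{n,N}$, where Itô's formula applies directly. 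We then pass to $u$ by the uniform convergence in Lemma \ref{lem:KolmogorovLip}, because uniform semiconcavity and semiconvexity bounds are stable under locally uniform limits.

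\textbf{Step 2 (second-order difference estimate).} Fix $x,h\in H$ and set $X^\pm=X^{t,x\pm h}$, $X^0=X^{t,x}$. The quantity to bound is
\[
\bigl|u(t,x+h)+u(t,x-h)-2u(t,x)\bigr|\le \int_t^T\mathbb{E}\bigl|f(s,X^+)+f(s,X^-)-2f(s,X^0)\bigr|\mathrm{d}s .
\]
Using the $C^{1,1}$ bound on $f$ in $H_{-1}$ from \eqref{eq:KLipa3}, we get
\[
|f(X^+)+f(X^-)-2f(X^0)|\le \|Df\|_{-1}\,\|X^++X^--2X^0\|_{-1}+C_3\bigl(\|X^+-X^0\|_{-1}^2+\|X^--X^0\|_{-1}^2\bigr).
\]
Here $\|Df\|_{-1}$ is finite because $f$ is Lipschitz in $\|\cdot\|_{-1}$. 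It therefore suffices to prove two moment bounds:
\[
\mathbb{E}\|X^\pm(s)-X^0(s)\|_{-1}^2\le C\|h\|_{-1}^2,\qquad \mathbb{E}\|X^+(s)+X^-(s)-2X^0(s)\|_{-1}\le C\|h\|_{-1}^2 .
\]

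\textbf{Step 3 (first moment bound).} We apply Itô's formula to $\|X^+-X^0\|_{-1}^2=\langle B(X^+-X^0),X^+-X^0\rangle$. The drift from $A_n$ is handled by the weak $B$-condition with constant $2c_0$ (Remark \ref{remark_convergence_Dv}(i)). The $w$ term is handled by \eqref{assumption1} together with $\|B^{1/2}y\|\le\|B\|^{1/2}\|y\|_{-1}$. The Itô correction is bounded by $\|B\|\,\|\sigma(X^+)-\sigma(X^0)\|_{L_2}^2\le C\|X^+-X^0\|_{-1}^2$ via \eqref{assumption2}. Gronwall's lemma then gives the first bound, with constants independent of $n,N$.

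\textbf{Step 4 (second moment bound; the main obstacle).} Let $Z=X^++X^--2X^0$. In its equation, the nonlinear parts appear as second differences: $w(X^+)+w(X^-)-2w(X^0)$ and $\sigma(X^+)+\sigma(X^-)-2\sigma(X^0)$. We decompose each of these into a linear term in $Z$ plus a remainder. For example,
\[
\sigma(X^+)+\sigma(X^-)-2\sigma(X^0)=D\sigma(X^0)Z+R,
\]
and, using \eqref{eq:KLipa2} and the corresponding condition in \eqref{eq:KLipa3} for $w$ (applied along segments via the mean value formula), we aim to show $\|R\|\le C(\|X^+-X^0\|_{-1}^2+\|X^--X^0\|_{-1}^2)$. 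The difficulty is that the noise remainder has size only of order $\|h\|^2$. It therefore cannot be handled by an $L^2$ Itô estimate on $\|Z\|_{-1}^2$, which would require fourth moments of $X^\pm-X^0$ of order $\|h\|_{-1}^4$. We plan to work instead with $\mathbb{E}\|Z\|_{-1}^2$ and to also establish the fourth moment bound $\mathbb{E}\|X^\pm-X^0\|_{-1}^4\le C\|h\|_{-1}^4$ by the same Itô/Gronwall argument as in Step 3. This yields $\mathbb{E}\|Z\|_{-1}^2\le C\|h\|_{-1}^4$, and the Cauchy--Schwarz inequality then gives the second bound in Step 2. If smoothness of $\sigma$ and $w$ (needed to write $D\sigma$, $Dw$) is not available, we would mollify them on finite-dimensional projections at the $u_{n,N}$ level and pass to the limit.

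Finally, combining Steps 2–4 gives uniform semiconcavity and semiconvexity of $u(t,\cdot)$ in $\|\cdot\|_{-1}$. The boundedness part of \eqref{eq:KC11} is already in Lemma \ref{lem:KolmogorovLip}: $\|u\|_\infty\le T\|f\|_\infty$, and $\|Du\|_{-1}\le M$ by \eqref{eq:KLip1}. Together these give \eqref{eq:KC11}.
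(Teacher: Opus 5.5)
Your proposal is correct and follows the paper's route. The paper also obtains uniform semiconvexity and semiconcavity of $u(t,\cdot)$, $u_n(t,\cdot)$, $u_{n,N}(t,\cdot)$ in $H_{-1}$ from the Feynman--Kac representation and the $H_{-1}$-semiconvexity/semiconcavity of $f$, then concludes with the Lasry--Lions criterion \cite{lasry_lions_1986}; it defers the moment estimates to \cite[Theorem 5.10]{defeo_swiech_wessels_2025}, which works with $X^\lambda-X_\lambda$ where you use the equivalent symmetric difference $X^++X^--2X^0$ with second and fourth moment bounds. One small slip: in Step 3 the inequality you need is $\|By\|\le\|B\|^{1/2}\|y\|_{-1}$, not $\|B^{1/2}y\|\le\|B\|^{1/2}\|y\|_{-1}$.
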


\begin{proof}
   Estimate \eqref{eq:KC11} can be proved using the fact that a uniformly continuous bounded function from $H_{-1}$ to $\mathbb{R}$ that is semiconvex and semiconcave, is $C^{1,1}_b(H_{-1})$, see \cite{lasry_lions_1986}. Here, the semiconvexity and semiconcavity follows from the Feynman--Kac representation of $u$ (resp. $u_n$ and $u_{n,N}$) and the semiconvexity and semiconcavity of $f$. The details can be found in \cite[Theorem 5.10]{defeo_swiech_wessels_2025}. Note that in that reference, the coefficients did not depend explicitly on $t$; instead they depend on a control variable. Nevertheless, the proof of the semiconcavity estimate for $u(t,\cdot), u_n(t,\cdot),u_{n,N}(t,\cdot)$ here follows exactly the same arguments. Since there is no control in the present case, the same proof gives uniform semiconvexity of $u(t,\cdot), u_n(t,\cdot), u_{n,N}(t,\cdot)$.
 \end{proof}

\begin{remark}\label{rem:Lipt}
It is standard to notice, see \cite[Remark 4.9]{swiech_wessels_2025}, that under the assumptions of Lemma \ref{lem:Kolmogorov.C11}, for every $R>0$ there is a constant $L_R\geq 0$ such that
\begin{equation}\label{eq:KLipt}
|u_{n,N}(t,x) - u_{n,N}(s,x) |,|u_n(t,x) - u_n(s,x) |\leq L_R|t-s|\quad\text{for all}\,\,t,s\in[0,T],x\in H,\|x\|\leq R.
\end{equation}
Moreover, see e.g. \cite[Lemma A.1]{defeo_swiech_wessels_2025}, $Du, Du_n, Du_{n,N}$ are uniformly continuous on bounded subsets of $[0,T]\times H$. The same arguments as those used to prove \cite[Lemma A.1]{defeo_swiech_wessels_2025} also show that $Du_n\to Du$ and $Du_{n,N}\to Du_n$ uniformly on bounded subsets of $[0,T]\times H$. We also notice that \eqref{eq:KC11} implies that for every $t\in [0,T]$, $D_{H_{-1}}u_{n,N}(t,\cdot):H_{-1}\to H_{-1}$ is Lipschitz continuous with a uniform Lipschitz constant independent of $n,N$. Therefore, since $Du_{n,N}=BD_{H_{-1}}u_{n,N}$ and $A^*B^{1/2}\in L(H)$, we obtain that $A^*Du_{n,N}(t,\cdot)=A^*BD_{H_{-1}}u_{n,N}(t,\cdot)=A^*B^{1/2}B^{1/2}D_{H_{-1}}u_{n,N}(t,\cdot)$ is Lipschitz continuous as a map from $H_{-1}$ to $H$ with a uniform Lipschitz constant independent of $n,N$ and $\|A^*Du_{n,N}\|_\infty\leq \|A^*B^{1/2}\|_{L(H)}M$. Therefore the same is true for $A^*_{n,N}Du_{n,N}$.
\end{remark}

The following lemma is the main ingredient in the proof of uniqueness for the linear Fokker--Planck equation \eqref{FP_linear}.

\begin{lemma}\label{lem:Kolmogorov.approx}
Let Assumptions \ref{ass:HJB}(A1),(A4) and Assumption \ref{Assumption:1} be satisfied and let $f\in C_b([0,T]\times H)$ satisfy \eqref{eq:KLipwf}. Suppose that 
\eqref{eq:KLipa2} and \eqref{eq:KLipa3} hold. Let $u$ be the viscosity solution of \eqref{equation_1}. Then there exist functions $\hat{u}_n\in \mathcal{D}_T$, $n=1,2,\dots$, such that $\hat{u}_n\to u, D\hat{u}_n \to Du$ uniformly on bounded subsets of $[0,T]\times H$ and such that 
\begin{equation}\label{equation_111}
    \partial_t \hat{u}_n(t,x) + L_0 \hat{u}_n(t,x) - \langle w(t,x), D\hat{u}_n(t,x) \rangle = f_n(t,x) \quad\text{in}\,\,(0,T)\times H,
\end{equation}
where $f_n\to f$ pointwise and $|f_n(t,x)|\leq C(1+\|x\|)$ for some $C\geq 0$ independent of $n$. Moreover, 
\begin{equation}\label{Bound_for_DH-1uhat}
\|D_{H_{-1}}\hat{u}_n\|_{-1,\infty}\leq M,
\end{equation}
where $M$ is the constant from \eqref{eq:KLip1}.
\end{lemma}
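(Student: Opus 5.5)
We will obtain $\hat u_n$ from the doubly approximated solutions $u_{n,N}$ of \eqref{equation_1nN} by a finite-dimensional mollification, and then take a diagonal sequence. Fix $n\geq 2c_0$ and $N$. The function $u_{n,N}(t,x)$ depends on $x$ only through $P_Nx$, since the equation \eqref{equation_1nN} only involves $P_Nx$ and $A_{n,N}=P_NA_nP_N$; the stochastic representation makes this clear. We can therefore regard $u_{n,N}$ as a function $\tilde u(t,x_N)$ on $[0,T]\times H_N\cong[0,T]\times\mathbb{R}^N$. It is a viscosity solution there of a finite-dimensional linear degenerate parabolic equation. Let $\rho_\varepsilon$ be a standard mollifier in $(t,x_N)$, after extending $\tilde u$ in $t$ to a slightly larger interval, for instance constantly. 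Set $u^\varepsilon_{n,N}(t,x)=(\tilde u*\rho_\varepsilon)(t,P_Nx)$. This function is smooth with bounded derivatives. Moreover, $A^*Du^\varepsilon_{n,N}=A^*P_N D_{x_N}(\tilde u*\rho_\varepsilon)$ is bounded and continuous, since $P_N$ has range in $\mathcal D(A^*)$ whenever $e_k\in\mathcal D(A^*)$. The latter holds because $e_k=\lambda_k^{-1}Be_k$ and $A^*B$ is bounded. Hence $u^\varepsilon_{n,N}\in\mathcal D_T$.

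Next we compute the residual. By Lemma \ref{lem:Kolmogorov.C11} and Remark \ref{rem:Lipt}, $\tilde u$ is Lipschitz in $t$ and $C^{1,1}$ in $x_N$, uniformly. A viscosity solution with this regularity is differentiable a.e. and twice differentiable a.e. by Alexandrov/Rademacher, so it satisfies \eqref{equation_1nN} pointwise a.e. It is then a strong (a.e.) solution, and the equation can be mollified. We get
\[
\partial_t u^\varepsilon + \langle A_{n,N}x, Du^\varepsilon\rangle+\tfrac12\operatorname{Tr}[\sigma\sigma^*(P_Nx)P_ND^2u^\varepsilon P_N]-\langle P_Nw(t,P_Nx),Du^\varepsilon\rangle = f(t,P_Nx)*\rho_\varepsilon + R_\varepsilon,
\]
where $R_\varepsilon$ collects the commutator terms, of the form $(b\cdot D\tilde u)*\rho_\varepsilon-b\cdot D(\tilde u*\rho_\varepsilon)$ and the analogous second-order term. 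These commutators tend to $0$ pointwise as $\varepsilon\to0$. For the first-order term this follows from continuity of the coefficients and boundedness of $D\tilde u$. For the second-order term it follows from continuity of $\sigma\sigma^*$ and boundedness of $D^2\tilde u$, via a Lebesgue-point argument. Locally on bounded sets, the first-order drift commutator is bounded by $C(1+\|x\|)$. Rewriting the left side with $L_0$ and $w$ yields \eqref{equation_111} with
\[
f_{n,N,\varepsilon}=f(\cdot,P_N\cdot)*\rho_\varepsilon+R_\varepsilon+\langle x,(A^*-A_{n,N}^*)Du^\varepsilon\rangle+\tfrac12\operatorname{Tr}[(\sigma\sigma^*(x)-P_N\sigma\sigma^*(P_Nx)P_N)D^2u^\varepsilon]-\langle w(t,x)-P_Nw(t,P_Nx),Du^\varepsilon\rangle .
\]
The term $D^2u^\varepsilon=P_N D^2 u^\varepsilon P_N$ makes the trace term equal to $\operatorname{Tr}[P_N(\sigma\sigma^*(x)-\sigma\sigma^*(P_Nx))P_N D^2u^\varepsilon]$, which tends to $0$ as $N\to\infty$ by (A4) and because $\|Q_Nx\|_{-1}\to0$. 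The remaining error terms are controlled as follows. The $A$-term is bounded by $\|x\|\,\|(A^*-A^*_{n,N})Du^\varepsilon\|$, which has linear growth by Remark \ref{rem:Lipt}, and it tends to $0$ pointwise as $N\to\infty$ and then $n\to\infty$. The $w$-term tends to $0$ by \eqref{assumption1}.

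Finally we choose a diagonal sequence $\varepsilon=\varepsilon(n,N)$, $N=N(n)$. The convergences $u_{n,N}\to u_n\to u$ and $Du_{n,N}\to Du_n\to Du$ hold uniformly on bounded sets by Lemma \ref{lem:KolmogorovLip} and Remark \ref{rem:Lipt}. Mollification in $x_N$ preserves uniform convergence on bounded sets of $H$ up to $\varepsilon$, using uniform continuity of $D\tilde u$. Together these give $\hat u_n\to u$ and $D\hat u_n\to Du$. For the bound \eqref{Bound_for_DH-1uhat}, $u_{n,N}(t,\cdot)$ is $M$-Lipschitz in $\|\cdot\|_{-1}$ by \eqref{eq:KLip1}, and averaging preserves this Lipschitz constant in the $H_{-1}$-norm on $H_N$. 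Hence $\|D_{H_{-1}}\hat u_n\|_{-1}\le M$.

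The main obstacle is making pointwise convergence $f_n\to f$ hold everywhere, rather than merely a.e., while keeping the uniform linear growth bound. Pointwise convergence of the mollification commutator $R_\varepsilon$ at every point requires care, since $D^2\tilde u$ is only $L^\infty$. To handle this I would first try exploiting continuity of the coefficients, writing $R_\varepsilon$ as $\int (b(y)-b(x))\cdot D\tilde u(y)\rho_\varepsilon(x-y)\,dy$. The second-order analogue only needs $\|D^2\tilde u\|_\infty$ together with continuity of $\sigma\sigma^*$. With this form, pointwise convergence to $0$ holds at every point without any Lebesgue-point condition.
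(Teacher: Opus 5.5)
Your construction is essentially the paper's: solve on $H_N$, mollify in $(t,x_N)$, use the a.e.\ equation with commutator estimates, then choose $N(n)$ and $\varepsilon(n)$ diagonally. The trace, $w$, source and commutator errors are handled adequately, given the uniform bounds $\|D^2u^\varepsilon\|\le \|B\|L$ and $\|Du^\varepsilon\|\le\|B^{1/2}\|M$.

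\textbf{The gap: the unbounded term.} The gap is in $\langle x,(A^*-A^*_{n,N})Du^\varepsilon\rangle$, the one step where the lemma genuinely depends on the structure $A^*B^{1/2}\in L(H)$. You bound it by $\|x\|\,\|(A^*-A^*_{n,N})Du^\varepsilon\|$ and say it vanishes ``as $N\to\infty$ and then $n\to\infty$''. That iterated limit, $P_NA_n^*y\to A_n^*y\to A^*y$, is valid for a fixed vector $y$. Here, however, the vector $Du^\varepsilon_{n,N}(t,x)$ changes with $n,N,\varepsilon$ along the diagonal. All you know is that $A^*Du^\varepsilon_{n,N}=A^*B^{1/2}z$ with $z=B^{1/2}D_{H_{-1}}u^\varepsilon_{n,N}(t,x)$ and $\|z\|\le M$. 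The convergence $D\hat u_n=B^{1/2}z\to Du$ only gives weak convergence of these $z$. Moreover, $A^*B^{1/2}$ is typically not compact: for $B=((I-A)(I-A^*))^{-1}$ the operator $(I-A^*)B^{1/2}$ is unitary, so $A^*B^{1/2}$ is compact minus unitary. Hence norm convergence of $(A^*-A^*_{n,N})Du^\varepsilon$ is not available, and your argument does not yield $f_n\to f$ pointwise.

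\textbf{The fix.} Put all the approximation on the fixed point $x$ rather than on the varying gradient. Since $P_N$ commutes with $B^{1/2}$ and $Du^\varepsilon=BD_{H_{-1}}u^\varepsilon\in H_N$, you have
\[
\langle x,A^*_{n,N}Du^\varepsilon\rangle=\langle P_N(A^*B^{1/2})^*n(nI-A)^{-1}P_Nx,\,B^{1/2}D_{H_{-1}}u^\varepsilon\rangle,\qquad \langle x,A^*Du^\varepsilon\rangle=\langle (A^*B^{1/2})^*x,\,B^{1/2}D_{H_{-1}}u^\varepsilon\rangle .
\]
The error is therefore at most $M\|P_N(A^*B^{1/2})^*n(nI-A)^{-1}P_Nx-(A^*B^{1/2})^*x\|$. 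This is $\le C\|x\|$, and it tends to $0$ for each $x$ along any sequence with $N(n)\to\infty$, uniformly in $\varepsilon$, because $\|n(nI-A)^{-1}\|\le1$ and $n(nI-A)^{-1}\to I$ strongly. This is how the paper does it.

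\textbf{Two smaller points.}
\begin{itemize}
\item The diagonal choice of $\varepsilon(n)$ needs the mollification errors to vanish uniformly on bounded subsets of $[\tau,T-\tau]\times H_N$, not merely pointwise. This holds here because $H_N$ is finite dimensional and the coefficients are uniformly continuous on compacts.
\item With the constant-in-time extension, the mollified equation holds only on $[\varepsilon,T-\varepsilon]$. Near $t=0$ and $t=T$ you must define $f_n$ as the left-hand side and check its linear growth there.
\end{itemize}
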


\begin{proof}
The proof is based on three approximation procedures: (i) a Yosida approximation $A_n$, $n\in \mathbb{N}$, of the unbounded operator $A$; (ii) a finite dimensional projection $P_N$, $N\in \mathbb{N}$, in the Hilbert space $H$; and (iii) a mollification which introduces a third parameter $\eta>0$.

First, let $u_n$, $n\geq 2c_0$, and $u_{n,N}$, $N\geq 1$ be the viscosity solutions of \eqref{equation_1n} and \eqref{equation_1nN}, respectively. By Lemmas \ref{lem:KolmogorovLip}, \ref{lem:Kolmogorov.C11} and Remark \ref{rem:Lipt} the functions $u_n, u_{n,N}$ satisfy \eqref{eq:KLip1}, \eqref{eq:KC11}, \eqref{eq:KLipt}. We note that the functions $u_{n,N}$ are obtained by first finding the viscosity solutions $\tilde{u}_{n,N}:[0,T]\times H_N\to\mathbb{R}$ of the equation
  \begin{equation}\label{equation_1nNfd}
    \begin{cases}
    \partial_t \tilde{u}_{n,N} +\langle A_{n,N}x,D\tilde{u}_{n,N} \rangle + \frac12 \operatorname{Tr} [ \sigma(x) \sigma^*(x) P_N D^2 \tilde{u}_{n,N} P_N ] \\
  \qquad\qquad\qquad - \langle w(t,x), D\tilde{u}_{n,N}\rangle = f(t,x)\quad\text{in } [0,T]\times H_N\\
    \tilde{u}_{n,N}(T,\cdot) = 0,
    \end{cases}
\end{equation}
and then setting $u_{n,N}(t,x)=\tilde{u}_{n,N}(t,P_Nx)$. Moreover, $\tilde{u}_{n,N}$ also satisfy \eqref{eq:KLip1}, \eqref{eq:KC11}, \eqref{eq:KLipt}. We point out that above we consider $H_N$ as a subspace of $H$ and hence consider $D\tilde{u}_{n,N}$ as an element of $H$ and $D^2\tilde{u}_{n,N}$ as an element of $S(H)$ using the identification discussed in Section \ref{sec:notass}. Moreover, we identify $x= \sum_{i=1}^{N} x^i e_i\in H_N$ with $(x^1,\dots,x^N)\in {\mathbb{R}}^N$ and hence we identify $H_N$ with ${\mathbb{R}}^N$. With this identification we can write $\tilde{u}_{n,N}(t,x)=\tilde{u}_{n,N}(t,x^1,\dots,x^N)$. Then $D\tilde{u}_{n,N}(t,x) = \sum_{i=1}^{N} \partial_{x^i}\tilde{u}_{n,N}(t,x) e_i$ and $D^2\tilde{u}_{n,N}(t,x)$ can be identified with the Hessian matrix $(\partial^2_{x^ix^j}\tilde{u}_{n,N}(t,x))_{1\leq i,j\leq N}$ by $D^2\tilde{u}_{n,N}(t,x)z=\sum_{i=1}^N\sum_{j=1}^N\partial^2_{x^ix^j}\tilde{u}_{n,N}(t,x)z^j e_i$ for $z=\sum_{i=1}^{N} z^i e_i$.

In order to use a mollification, we extend the functions $\tilde{u}_{n,N}$ from $[0,T]\times\mathbb{R}^N$ to ${\mathbb{R}}^{N+1}$ by setting $\tilde{u}_{n,N}(t,x)=\tilde{u}_{n,N}(T,x)=0$ for $t>T$ and $\tilde{u}_{n,N}(t,x)=\tilde{u}_{n,N}(0,x)$ for $t<0$. Now, let $\rho \in C^{\infty}({\mathbb{R}}^{N+1})$ be a standard mollifier with $\text{supp}(\rho) \subset B_1(0)$, $\rho(t,x) \geq 0$, $\int_{{\mathbb{R}}^{N+1}} \rho(t,x) \mathrm{d}t\mathrm{d}x = 1$, and let for $\eta>0$, $\rho_\eta(t,x) = (1/\eta^{N+1}) \rho((t,x)/\eta)$. We define
    \begin{equation}
       \tilde{u}_{n,N}^{\eta}(t,x) =\int_{\mathbb{R}^{N+1}} \tilde{u}_{n,N}(s,y) \rho_{\eta}(t-s,x-y)  \mathrm{d}s\mathrm{d}y \quad \text{for } t\in [0,T], x\in \mathbb{R}^N
    \end{equation}
    and then
    \[
    u_{n,N}^{\eta}(t,x)=\tilde{u}_{n,N}^{\eta}(t,P_Nx)\quad\text{for}\,\,t\in[0,T],x\in H.
    \]
Let us collect a few properties of these mollified functions. We have $\tilde{u}_{n,N}^{\eta}\in C^{\infty}([0,T]\times H_N)$ and hence $u_{n,N}^{\eta}\in C^{\infty}([0,T]\times H)$. 
%\textcolor{red}{$A^*_{n,N}Du_{n,N}^{\eta}\in C^{\infty}([0,T]\times H;\textcolor{blue}{H})$}. 
Using standard convolution arguments it is easy to see that 
\begin{equation}\label{eq:convvnNeta}
    \tilde{u}_{n,N}= \lim_{\eta\to 0} \tilde{u}_{n,N}^{\eta}, \quad D\tilde{u}_{n,N}= \lim_{\eta\to 0} D\tilde{u}_{n,N}^{\eta}, 
    %\quad \textcolor{red}{A^*_{n,N}D\tilde{u}_{n,N}= \lim_{\eta\to 0} A^*_{n,N}D\tilde{u}_{n,N}^{\eta}}
\end{equation}
uniformly on bounded subsets of $[0,T]\times H_N$ and hence
\begin{equation}\label{eq:convunNeta}
     u_{n,N}= \lim_{\eta\to 0} u_{n,N}^{\eta}, \quad Du_{n,N}= \lim_{\eta\to 0} Du_{n,N}^{\eta}, 
     %\quad \textcolor{red}{A^*_{n,N}Du_{n,N}= \lim_{\eta\to 0} A^*_{n,N}Du_{n,N}^{\eta}}
\end{equation}
uniformly on bounded subsets of $[0,T]\times H$. We also see that if $M$ is the constant from \eqref{eq:KLip1} and $L_R$, $R>0$, are constants from \eqref{eq:KLipt}, then for $\eta$ sufficiently small
\begin{equation}\label{eq:KLiptetak}
    |\tilde{u}_{n,N}^{\eta}(t,x) - \tilde{u}_{n,N}^{\eta}(s,y) |\leq L_{R+1}|t-s| +M\|x-y\|_{-1}\quad\text{for all}\,\,t,s\in[0,T],x,y\in H_N,\|x\|,\|y\|\leq R
\end{equation}
and thus the same is true for $u_{n,N}^{\eta}$. Hence $u_{n,N}^{\eta}$ extends to a function on $[0,T]\times H_{-1}$ and $\|D_{H_{-1}}u_{n,N}^{\eta}\|_{-1,\infty} \leq M$. Similarly we have $\|D_{H_{-1}}\tilde{u}_{n,N}^{\eta}\|_{-1,\infty} \leq M$.
%Moreover, since 
%\[
%\partial_tu_n^{\eta,k}(t,x)=\int_{\mathbb{R}^{k+1}} u_n(s,y^1,\dots,y^k,x') \rho_{\eta}'(t-s)\prod_{i=1}^k \rho_{\eta_i}(x^i-y^i)  \mathrm{d}%s\mathrm{d}y^1\dots\mathrm{d}y^k,
%\]
%which is Lipschitz continuous on $[0,T]\times H$, uniformly in $k$, and the sequence $(\partial_tu_n^{\eta,k})$ is uniformly convergent on $%[0,T]\times H$ as $k\to\infty$, we obtain that its limit is Lipschitz continuous on $[0,T]\times H$ and must be equal to $\partial_tu_n^{\eta}$. %Similarly, we have
%\[
%Du_n^{\eta,k}(t,x)=\int_{\mathbb{R}^{k+1}} Du_n(s,x^1-y^1,\dots,x_k-y^k,x') \rho_{\eta}(t-s)\prod_{i=1}^k \rho_{\eta_i}(y^i)  \mathrm{d}%s\mathrm{d}y^1\dots\mathrm{d}y^k,
%%\]
%from which, using \eqref{eq:KLipa3}, we easily deduce that the sequence $(Du_n^{\eta,k})$ is uniformly convergent on $[0,T]\times H$ as %$k\to\infty$ and
% \begin{equation}\label{eq:KLipxetak1}
%\|Du_n^{\eta,k}(t,x) - Du_n^{\eta,k}(t,y) \|\leq \|B^{\frac{1}{2}}\|L\|x-y\|_{-1}\quad\mbox{for all}\,\,t\in[0,T],x,y\in H,
%\end{equation}
%where $L$ is the constant from \eqref{eq:KC11}. We can then conclude that $u_n^\eta$ has Fr\'echet derivative with respect to $x$ which %is equal to the limit of the sequence $(Du_n^{\eta,k})$. 
Moreover, \eqref{eq:KC11} implies that for all $t\in [0,T], x,y\in H_N$ and $0\leq \lambda\leq 1$,
\begin{equation}\label{eq:C11vnNeta}
\begin{split}
&|\lambda \tilde{u}_{n,N}^{\eta}(t,x) +(1-\lambda)\tilde{u}_{n,N}^{\eta}(t,y) - \tilde{u}_{n,N}^{\eta}(t,\lambda x+(1-\lambda)y)|
\\
&\leq
\int_{\mathbb{R}^{N+1}} \Big |\lambda \tilde{u}_{n,N}(s,x-z) +(1-\lambda)\tilde{u}_{n,N}(s,y-z)
\\
&
\qquad\qquad
-\tilde{u}_{n,N}(s, \lambda (x-z)+(1-\lambda) (y-z))\Big |
\rho_{\eta}(t-s,z)  \mathrm{d}s\mathrm{d}z
\\
&
\leq 
\int_{\mathbb{R}^{N+1}} \lambda(1-\lambda)L\|x-y\|_{-1}^2\rho_{\eta}(t-s,z)  \mathrm{d}s\mathrm{d}z= \lambda(1-\lambda)L\|x-y\|_{-1}^2.
\end{split}
\end{equation}
%This, together with \eqref{eq:KLiptetak}, also  implies that \textcolor{blue}{$\|D_{H_{-1}}\tilde{u}_{n,N}^{\eta}\|_{-1,\infty} \leq M$} and hence 
%\textcolor{blue}{$\|D_{H_{-1}}u_{n,N}^{\eta}\|_{-1,\infty} \leq M$}. 
Thus we conclude that the functions 
%\textcolor{blue}{$\tilde{u}_{n,N}^{\eta}$} and 
$u_{n,N}^{\eta}$ 
satisfy \eqref{eq:KC11} with some constant $L$ independent of $n,N,\eta$. Also, arguing as in Remark \ref{rem:Lipt}, we have $\|A^* Du_{n,N}^{\eta}\|_\infty \leq \|A^*B^{1/2}\|_{L(H)}M$ and $\|A^*_{n,N}D u_{n,N}^{\eta}\|_\infty \leq \|A^*B^{1/2}\|_{L(H)}M$. 
%\textcolor{blue}{and the same is true for $u^{\eta}_{n,N}$.} 
This shows that $u^{\eta}_{n,N} \in \mathcal{D}_T$ for any $\eta>0$ and $n,N\in \mathbb{N}$.

%\textcolor{blue}{Since the solution $\tilde{u}_{n,N}$ of equation \eqref{equation_1nNfd} satisfies \eqref{eq:KLip1}, \eqref{eq:KC11},%
%\eqref{eq:KLipt}}, 
Since $\tilde{u}_{n,N}\in W^{1,2,\infty}_{\rm loc}([0,T]\times H_N)$, it satisfies the equation pointwise a.e., that is for a.e. $(s,y)=(s,y^1,\dots,y^N)\in [0,T]\times H_N$
\[
\begin{split}
 \partial_t \tilde{u}_{n,N}(s,y) &+\langle y,A_{n,N}^*D\tilde{u}_{n,N}(s,y) \rangle + \frac12 \operatorname{Tr} [\sigma(y) \sigma^*(y)P_ND^2 \tilde{u}_{n,N}(s,y)P_N ] 
\\
&- \langle w(s,y), D\tilde{u}_{n,N}(s,y)\rangle = f(s,y).
\end{split}
\]
Let $(t,x)\in (0,T)\times H_N$. We multiply the above equation by $\rho_{\eta}(t-s,x-y)$, $\eta<t$, and integrate over $\mathbb{R}^{N+1}$. Using uniform continuity of $\sigma, f$, uniform continuity on bounded sets of $w$, uniform boundedness of $D\tilde{u}_{n,N}$, $A^*_{n,N}D\tilde{u}_{n,N}$, $D^2\tilde{u}_{n,N}$ and the linearity of the equation it is easy to see that the functions $\tilde{u}_{n,N}^\eta$ satisfy 
\begin{equation}\label{eq:equalityunNetatilde}
\begin{split}
 \partial_t \tilde{u}_{n,N}^\eta(t,x) &+\langle x,A_{n,N}^*D\tilde{u}_{n,N}^\eta(t,x) \rangle + \frac12 \operatorname{Tr} [ \sigma(x) \sigma^*(x)P_ND^2 \tilde{u}_{n,N}^\eta(t,x)P_N ] 
\\
&- \langle w(t,x), D\tilde{u}_{n,N}^\eta(t,x)\rangle = f^\eta_{n,N}(t,x),
\end{split}
\end{equation}
where $f^\eta_{n,N}:[\eta,T-\eta]\times H_N\to\mathbb{R}$, $\|f^\eta_{n,N} \|_\infty\leq \hat C$ for some $\hat C$ independent of $\eta$, $| f^\eta_{n,N} - f | \leq f^{\eta}$, and $f^{\eta} \to 0$ as $\eta \to 0$, uniformly on bounded subsets of $[\tau,T-\tau]\times \bigcup_{N\geq 1}H_N,\tau>0$. Indeed, regarding the estimate for the term involving $A_{n,N}^*$, we have
\[
\begin{split}
&\left|\int_{\mathbb{R}^{N+1}}\langle y,A_{n,N}^*D\tilde{u}_{n,N}(s,y) \rangle \rho_{\eta}(t-s,x-y)\mathrm{d}s\mathrm{d}y-\langle x,A_{n,N}^*D\tilde{u}_{n,N}^\eta(t,x) \rangle\right|\\
&\leq \eta \|A^*B^{\frac{1}{2}}\|_{L(H)}M+
\left|\left \langle x,\int_{\mathbb{R}^{N+1}}A_{n,N}^*D\tilde{u}_{n,N}(s,y) \rho_{\eta}(t-s,x-y)\mathrm{d}s\mathrm{d}y-A_{n,N}^*D\tilde{u}_{n,N}^\eta(t,x)\right \rangle \right |
\\
&=\eta \|A^*B^{\frac{1}{2}}\|_{L(H)}M+|\langle x,A_{n,N}^*D\tilde{u}_{n,N}^\eta(t,x)-A_{n,N}^*D\tilde{u}_{n,N}^\eta(t,x)\rangle|=\eta \|A^*B^{\frac{1}{2}}\|_{L(H)}M.
\end{split}
\]
The remaining terms can be handled similarly.

Hence, since $Du_{n,N}^\eta(t,x)=P_ND\tilde{u}_{n,N}^\eta(t,P_Nx)=D\tilde{u}_{n,N}^\eta(t,P_Nx)$ and then $D^2u_{n,N}^\eta(t,x)=P_ND^2\tilde{u}_{n,N}^\eta(t,P_Nx)P_N$, we obtain from \eqref{eq:equalityunNetatilde} 
\begin{equation}\label{eq:equalityunNeta}
\begin{split}
 \partial_t u_{n,N}^\eta(t,x) &+\langle x,A_{n,N}^*Du_{n,N}^\eta(t,x) \rangle + \frac12 \operatorname{Tr} [ \sigma(P_Nx) \sigma^*(P_Nx)D^2 u_{n,N}^\eta(t,x) ] 
\\
&- \langle w(t,P_Nx), Du_{n,N}^\eta(t,x)\rangle = f^\eta_{n,N}(t,P_Nx),\quad\text{for all}\,\,(t,x)\in[\eta,T-\eta]\times H.
\end{split}
\end{equation}
Due to Lemma \ref{lem:KolmogorovLip}, Remark \ref{rem:Lipt} and equation \eqref{eq:convunNeta}, for every $n\geq 2c_0$ we may choose $N(n)\geq n$ and $\eta(n)\leq 1/n$ such that for $t\in[0,T],\|x\|\leq n$
\[
\left | u_{n,N(n)}^{\eta(n)}(t,x)-u_n(t,x) \right | \leq \frac{1}{n},\quad \left \|Du_{n,N(n)}^{\eta(n)}(t,x)-Du_n(t,x) \right \|\leq \frac{1}{n}.
\]
We set $\hat{u}_n:=u_{n,N(n)}^{\eta(n)}$.

Next, we want to eliminate the projections $P_N$ in equation \eqref{eq:equalityunNeta}. To do so, let us treat each term individually. First we note that
\[
A_{n,N(n)}^*D\hat{u}_n(t,x)=P_{N(n)}n(nI-A^*)^{-1}A^*P_{N(n)}D\hat{u}_n(t,x)=P_{N(n)}n(nI-A^*)^{-1}A^*B^{\frac{1}{2}}P_{N(n)}B^{-\frac{1}{2}}D\hat{u}_n(t,x).
\]
Hence, noticing that $D\hat{u}_n(t,x)=BD_{H_{-1}}\hat{u}_n(t,x)$, we can write 
\[
\langle x,A_{n,N(n)}^*D\hat{u}_n(t,x) \rangle=\langle P_{N(n)}(A^*B^{\frac{1}{2}})^*n(nI-A)^{-1}P_{N(n)}x,B^{\frac{1}{2}}D_{H_{-1}}\hat{u}_n(t,x) \rangle.
\]
Since $\|n(nI-A)^{-1}\|_{L(H)}\leq 1$ and $n(nI-A)^{-1}y\to y$ for all $y\in H$, $n(nI-A)^{-1}P_{N(n)}x\to x$ as $n\to\infty$. Thus, as $A^*B^{1/2}\in L(H)$, we obtain $P_{N(n)}(A^*B^{1/2})^*n(nI-A)^{-1}P_{N(n)}x\to (A^*B^{1/2})^*x$ as $n\to\infty$. Therefore, since $\|B^{1/2}D_{H_{-1}}\hat{u}_n(t,x)\| = \|D_{H_{-1}}\hat{u}_n(t,x)\|_{-1}\leq M$, for every $(t,x)\in [0,T]\times H$, we have
\[
\begin{split}
g_1^n(t,x)&:=|\langle x,A_{n,N(n)}^*D\hat{u}_n(t,x) \rangle-\langle x,A^*D\hat{u}_n(t,x) \rangle|
\\
&
=
|\langle x,A_{n,N(n)}^*D\hat{u}_n(t,x) \rangle-\langle (A^*B^{\frac{1}{2}})^*x,B^{\frac{1}{2}}D_{H_{-1}} \hat{u}_n(t,x)\rangle|\to 0 \quad\text{as }n\to\infty
\end{split}
\]
and moreover $|g_1^n(t,x)|\leq M\|x\|$.

In order to treat the diffusion term in equation \eqref{eq:equalityunNeta}, we note that for every $(t,x)\in[0,T]\times H$, we have $D^2\hat{u}_n(t,x)=BD^2_{H_{-1}}\hat{u}_n(t,x)$, and thus $\|D^2\hat{u}_n(t,x)\|_{L(H)}\leq \|B\|_{L(H)}L$. Also, it follows from \eqref{assumption2} that
\[
\|\sigma(x)-\sigma(P_Nx)\|_{L_2(\Xi,H)}\to 0\quad \text{as } N\to \infty
\]
uniformly on bounded subsets of $H$. Hence,
\[
g_2^n(t,x) := \frac12\left|\operatorname{Tr} [ \sigma(P_{N(n)}x) \sigma^*(P_{N(n)}x)D^2 \hat{u}_n(t,x)] 
-\operatorname{Tr} [ \sigma(x) \sigma^*(x) D^2 \hat{u}_n(t,x)] \right| \to 0 \quad \text{as } n\to \infty
\]
uniformly on bounded subsets of $H$ and the functions $g_2^n$ are uniformly bounded.

Regarding the advection term in equation \eqref{eq:equalityunNeta}, since $B^{1/2}$ is compact, $\lim_{N\to+\infty}\|x-P_Nx\|_{-1}\to 0$ uniformly on bounded sets in $H$. This, together with the fact that
$\|D\hat{u}_n\|_\infty\leq \|B^{1/2}\|_{L(H)}M$ implies that
\[
g_3^n(t,x) := \left |\langle w(t,P_{N(n)}x), D\hat{u}_n(t,x)\rangle-\langle w(t,x), D\hat{u}_n(t,x)\rangle \right | \to 0\quad \text{as } n \to \infty
\]
uniformly on bounded subsets of $H$ and the functions $g_3^n$ are uniformly bounded.

Finally, regarding the source term in equation \eqref{eq:equalityunNeta} we note that
\begin{equation}
    g_4^n(t,x) := \left | f(t,x) - f^{\eta(n)}_{n,N(n)}(t, P_{N(n)}x) \right | \leq f^{\eta(n)} \to 0\quad \text{as } n \to \infty
\end{equation}
uniformly on bounded subsets of $H$ and the functions $g_4^n$ are uniformly bounded.

Applying these estimates to \eqref{eq:equalityunNeta} with $\hat{u}_n=u_{n,N(n)}^{\eta(n)}$ we thus conclude that
\begin{equation}\label{eq:equalityunNeta1}
\begin{split}
 \partial_t \hat{u}_n(t,x) &+\langle x, A^*D\hat{u}_n(t,x) \rangle + \frac12 \operatorname{Tr} [ \sigma(x) \sigma^*(x)D^2 \hat{u}_n(t,x) ] 
\\
&- \langle w(t,x), D\hat{u}_n(t,x)\rangle =f_n(t,x):=f(t,x)+ \hat g_n(t,x),\quad\text{for all}\,\,(t,x)\in[\eta(n),T-\eta(n)]\times H,
\end{split}
\end{equation}
where $|\hat g_n(t,x)|\leq g_1^n(t,x)+g_2^n(t,x)+g_3^n(t,x) + g_4^n(t,x)$, that is for every $(t,x)\in(0,T)\times H$, $|\hat g_n(t,x)|\leq C(1+\|x\|)$ and $\hat g_n(t,x)\to 0$ as $n\to\infty$. If $t\in (0,T)\setminus [\eta(n),T-\eta(n)]$ then we set $f_n(t,x)$ to be the left-hand side of \eqref{eq:equalityunNeta1} so that $|\hat f_n(t,x)|\leq C(1+\|x\|)$. However, if $0<t<T$ then for large $n$, $t\in [\eta(n),T-\eta(n)]$ and hence always for every $(t,x)\in (0,T)\times H$ we have $f_n(t,x)\to 0$.
\end{proof}

With a similar proof, we obtain the following lemma which will be used in Section \ref{section_MFG_Uniqueness} to prove uniqueness of the solution of the MFG system.

\begin{lemma}\label{lem:HJB.approx}
Let Assumptions \ref{ass:HJB}(A1),(A2),(A4),(A5),(A6) and let $m\in S$. Let $v$ be the viscosity solution of \eqref{HJB}. Then there exist functions $\hat{v}_n\in \mathcal{D}_T, n\geq 2c_0$ such that $\hat{v}_n\to v, D\hat{v}_n \to Dv$ uniformly on bounded subsets of $[0,T]\times H$ and such that 
\begin{equation}\label{equation_1HJB}
    \partial_t \hat{v}_n(t,x) + L_0 \hat{v}_n(t,x) - \mathcal{H}(x,D\hat{v}_n(t,x),m(t)) = f_n(t,x) \quad\text{in}\,\,(0,T)\times H,
\end{equation}
where $f_n(t,x)\to 0$ for all $(t,x)\in (0,T)\times H$ and $|f_n(t,x)|\leq C(1+\|x\|)$ for some $C\geq 0$ independent of $n$. 
\end{lemma}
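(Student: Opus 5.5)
We follow the proof of Lemma \ref{lem:Kolmogorov.approx} step by step, replacing the linear first order term $-\langle w,Du\rangle+f$ by the nonlinear Hamiltonian $-\mathcal{H}(x,Du,m(t))$. First, for $n\geq 2c_0$ and $N\geq 1$ we take the viscosity solutions $v_n$ and $v_{n,N}$ of \eqref{HJBn} and \eqref{HJBnN} given by Remark \ref{remark_convergence_Dv}(ii). We write $v_{n,N}(t,x)=\tilde v_{n,N}(t,P_Nx)$, where $\tilde v_{n,N}$ solves the corresponding equation on $[0,T]\times H_N\cong[0,T]\times\mathbb{R}^N$.

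By (A6), $v,v_n,v_{n,N}$ are bounded in $C^{1,1}_b(H_{-1})$ uniformly in $t,n,N$. Remark \ref{remark_convergence_Dv} gives $v_{n,N}\to v_n\to v$ uniformly on bounded sets. The argument of \cite[Lemma A.1]{defeo_swiech_wessels_2025}, as used in Remark \ref{remark_convergence_Dv}(iii) and Remark \ref{rem:Lipt}, upgrades this to $Dv_{n,N}\to Dv_n\to Dv$ uniformly on bounded sets. We also need local Lipschitz continuity in $t$ of $v_{n,N}$, as in \eqref{eq:KLipt}. We obtain it as in \cite[Remark 4.9]{swiech_wessels_2025}: since $A_{n,N}$ is bounded and $D_x v_{n,N}$ is bounded, the $C^{1,1}$ regularity in $x$ yields a bound on $\partial_t v_{n,N}$ on bounded sets. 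The $C^{1,1}$ bound with respect to $H_{-1}$ plus $A^*B^{1/2}\in L(H)$ give, exactly as in Remark \ref{rem:Lipt}, uniform bounds on $A^*Dv_{n,N}$ and $A^*_{n,N}Dv_{n,N}$.

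Next we mollify $\tilde v_{n,N}$ in $(t,x)\in\mathbb{R}^{N+1}$, extended in time as before, and set $v^\eta_{n,N}(t,x)=\tilde v^\eta_{n,N}(t,P_Nx)$. As in \eqref{eq:KLiptetak}–\eqref{eq:C11vnNeta}, the functions $v^\eta_{n,N}$ belong to $\mathcal{D}_T$, satisfy the same uniform $C^{1,1}_b(H_{-1})$ and $A^*D$ bounds, and converge together with their gradients as $\eta\to0$. Since $\tilde v_{n,N}\in W^{1,2,\infty}_{\rm loc}$, the equation holds a.e. We convolve it with $\rho_\eta(t-\cdot,x-\cdot)$, and all terms except the Hamiltonian are treated exactly as before.

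The Hamiltonian is the one genuinely new term, and the main obstacle is its commutation with convolution. We need
\[
\Big|\int \mathcal{H}(y,P_ND\tilde v_{n,N}(s,y),m(s))\rho_\eta(t-s,x-y)\,\mathrm{d}s\,\mathrm{d}y-\mathcal{H}(x,D\tilde v^\eta_{n,N}(t,x),m(t))\Big|\to0 .
\]
We split this difference into three contributions and control each with (A2):
\begin{itemize}
\item the $x$-dependence contributes at most $L\eta(1+2\sup\|Dv\|)$;
\item the $m$-dependence contributes at most $C\sup_{|s-t|\leq\eta}\mathbf{d}_{1,-1}(m(s),m(t))$, which tends to $0$ because $m\in S$ is uniformly continuous into $\mathcal{P}_1(H_{-1})$;
\item the gradient dependence contributes at most $L\sup_{|(s,y)-(t,x)|\le\eta}\|D\tilde v_{n,N}(s,y)-D\tilde v_{n,N}(t,x)\|$. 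This tends to $0$ by uniform continuity of $D\tilde v_{n,N}$ on bounded sets (Lipschitz in $x$ from $C^{1,1}$, continuous in $t$ by \cite[Lemma A.1]{defeo_swiech_wessels_2025}).
\end{itemize}
The last bound is why we use the Lipschitz dependence of $\mathcal{H}$ on $p$ rather than linearity. We also need a modulus uniform in $N$, or at least for fixed $(n,N)$, which suffices since $\eta$ is chosen after $n,N$.

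Finally, we choose $N(n)\ge n$ and $\eta(n)\le1/n$ diagonally so that $\hat v_n:=v^{\eta(n)}_{n,N(n)}$ is $1/n$-close to $v_n$ in value and gradient on $[0,T]\times B_n(0)$. We then remove the projections as in the proof of Lemma \ref{lem:Kolmogorov.approx}:
\begin{itemize}
\item the $A^*_{n,N}$ term via $g_1^n$, which gives the bound $C\|x\|$;
\item the trace term via $g_2^n$, using \eqref{assumption2};
\item the Hamiltonian term via (A2), using $\|x-P_{N}x\|_{-1}\to0$ uniformly on bounded sets and $\|P_{N}D\hat v_n-D\hat v_n\|=0$.
\end{itemize}
This produces \eqref{equation_1HJB} with $f_n\to0$ pointwise on $(0,T)\times H$. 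On the boundary layers $t\notin[\eta(n),T-\eta(n)]$ we define $f_n$ as the left-hand side of \eqref{equation_1HJB}. The bound $|f_n|\le C(1+\|x\|)$ then follows from boundedness of $\mathcal{H}(\cdot,0,\cdot)$, the uniform bounds on $D\hat v_n$, $D^2\hat v_n$, $A^*D\hat v_n$ and $\partial_t\hat v_n$, and boundedness of $\sigma$.
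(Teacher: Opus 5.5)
Your proposal is correct and follows essentially the same route as the paper: approximations $v_{n,N}=\tilde v_{n,N}(t,P_Nx)$, space-time mollification of the a.e.-satisfied finite-dimensional equation, a diagonal choice $\hat v_n=v^{\eta(n)}_{n,N(n)}$, and removal of the projections. In both, the Hamiltonian commutator is controlled through (A2) using continuity of $m$ in $\mathbf{d}_{1,-1}$ and uniform continuity of $D\tilde v_{n,N}$ on bounded sets, and the projection is removed via $|\mathcal{H}(P_Nx,D\hat v_n,m(t))-\mathcal{H}(x,D\hat v_n,m(t))|\le L\|P_Nx-x\|_{-1}(1+2\|D\hat v_n\|)$.

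The differences are cosmetic. Your $x$-term should carry a factor $\|B\|^{1/2}$, and your gradient term must be compared with $D\tilde v^\eta_{n,N}(t,x)$ rather than $D\tilde v_{n,N}(t,x)$, which costs a factor $2$. Also, the time-Lipschitz bound on $\partial_t\hat v_n$ is only local ($L_R$ on $\|x\|\le R$), so the estimate $|f_n|\le C(1+\|x\|)$ on the boundary layers implicitly requires $L_R\le C(1+R)$, exactly as in the paper.
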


\begin{proof}
We use the same notation and an analogous approximation procedure as in the proof of Lemma \ref{lem:Kolmogorov.approx}. Let $v_n$ and $v_{n,N}$ be the viscosity solutions of \eqref{HJBn} and \eqref{HJBnN}, respectively. Assumption \ref{ass:HJB}(A6), together with the argument from \cite[Remark 4.9]{swiech_wessels_2025} as before, guarantees that $v_n,$ and $v_{n,N}$ satisfy 
  \begin{equation}\label{eq:KLiptv}
|v_n(t,x) - v_n(s,y) |+ |v_{n,N}(t,x) - v_{n,N}(s,y) |\leq L\|x-y\|_{-1}+L_R |t-s|\quad\forall \,t,s\in[0,T],x,y\in H,\|x\|,\|y\|\leq R
\end{equation}
for all $R>0$, where $L$ is from Assumption \ref{ass:HJB}(A6) and the $L_R$ are some absolute constants. If we use \eqref{eq:KLiptv} and argue for instance as in the proof of \cite[Lemma A.1]{defeo_swiech_wessels_2025}, we easily obtain that $Dv,Dv_n,Dv_{n,N}$ are uniformly continuous on bounded subsets of $[0,T]\times H$, uniformly in $n,N$. The functions $v_{n,N}$ are obtained by first finding viscosity solutions $\tilde{v}_{n,N}:[0,T]\times H_N \to \mathbb{R}$ of the equation
    \begin{equation}\label{equation_1nNfd_2}
    \begin{cases}
        \partial_t \tilde{v}_{n,N} + \langle A_{n,N} x,D\tilde{v}_{n,N} \rangle + \frac12 \operatorname{Tr} [ \sigma(x) \sigma^*(x)P_N D^2 \tilde{v}_{n,N} P_N ] - \mathcal{H}(x,D\tilde{v}_{n,N},m(t)) = 0\quad \text{in } [0,T]\times H_N\\
        \tilde{v}_{n,N}(T,x) = G(x,m(T)), \quad x\in H_N  
    \end{cases}
    \end{equation}
    and then setting $v_{n,N}(t,x) = \tilde{v}_{n,N}(t,P_Nx)$. Moreover, $\tilde{v}_{n,N}$ also satisfies \eqref{eq:KC11}, \eqref{eq:KLiptv} (with $L_R$ replaced by $L_{R+1}$). Now, we use a mollification to define
    \begin{equation}
        \tilde{v}^{\eta}_{n,N}(t,x) = \int_{\mathbb{R}^{N+1}} \tilde{v}_{n,N}(s,y) \rho_{\eta}(t-s,x-y) \mathrm{d}s\mathrm{d}y \quad \text{for }(t,x)\in [0,T]\times H_N
    \end{equation}
    and set $v^{\eta}_{n,N} = \tilde{v}^{\eta}_{n,N}(t,P_Nx)$, for $(t,x) \in [0,T]\times H$. Note that since $v_n$, $v_{n,N}$ and $\tilde{v}_{n,N}$ satisfy \eqref{eq:KC11} and \eqref{eq:KLiptv}, $v^{\eta}_{n,N}$ and $\tilde{v}^{\eta}_{n,N}$ inherit the same properties as $u^{\eta}_{n,N}$ and $\tilde{u}^{\eta}_{n,N}$ in the proof of Lemma \ref{lem:Kolmogorov.approx}.  
    
    Let $(t,x)\in (0,T)\times H_N$. Since $\tilde{v}_{n,N}\in W^{1,2,\infty}_{\rm loc}([0,T]\times H_N)$, it satisfies equation \eqref{equation_1nNfd_2} pointwise a.e., that is for a.e. $(s,y)=(s,y^1,\dots,y^N)\in [0,T]\times H_N$
    \begin{equation}
        \partial_t \tilde{v}_{n,N}(s,y) + \langle y,A_{n,N}^*D\tilde{v}_{n,N}(s,y) \rangle + \frac12 \operatorname{Tr} [\sigma(y) \sigma^*(y)P_ND^2 \tilde{v}_{n,N}(s,y)P_N ]  - \mathcal{H}(y,D\tilde{v}_{n,N}(s,y),m(s)) = 0.
    \end{equation}
    Multiplying this equation by $\rho_{\eta}(t-s,x-y)$, $\eta<t$, and integrating yields
    \begin{multline}\label{equation_tilde_v}
         \partial_t \tilde{v}^{\eta}_{n,N}(t,x) +\langle y,A_{n,N}^*D\tilde{v}^{\eta}_{n,N}(t,x) \rangle\\
        + \frac12 \operatorname{Tr} [\sigma(x) \sigma^*(x)P_ND^2 \tilde{v}^{\eta}_{n,N}(t,x)P_N ]  - \mathcal{H}(x,D\tilde{v}^{\eta}_{n,N}(t,x),m(s)) = f^{\eta}_{n,N}(t,x)
    \end{multline}
 for some functions $f^{\eta}_{n,N}:[\eta,T-\eta]\times H_N \to \mathbb{R}$, satisfying $\| f_{n,N}^{\eta}\|_\infty \leq C$, where $C$ is independent of $\eta$, $| f_{n,N}^{\eta} - f | \leq f^{\eta}$, and $f^{\eta}\to 0$ as $\eta \to 0$ uniformly on bounded subsets of $[\tau,T-\tau]\times  \bigcup_{N\geq 1}H_N,\tau>0$. Indeed, the linear terms can be treated as in Lemma \ref{lem:Kolmogorov.approx}. For the nonlinear term, using the Lipschitz continuity of $\mathcal{H}$ from Assumption \ref{ass:HJB}(A2), we obtain
    \begin{equation}
    \begin{split}
        &\left | \int_{\mathbb{R}^{N+1}} \rho_{\eta}(t-s,x-y) \mathcal{H}(y,D\tilde{v}_{n,N}(s,y),m(s)) \mathrm{d}s\mathrm{d}y - \mathcal{H} \left ( x, D\tilde{v}^{\eta}_{n,N}(t,x), m(t) \right ) \right |\\
        %&\leq \int_{\mathbb{R}^{N+1}} \rho_{\eta}(t-s,x-y) \left | \mathcal{H}(y,D\tilde{v}_{n,N}(s,y),m(s)) - \mathcal{H} \left ( x, D\tilde{v}^{\eta}_{n,N}(t,x), m(t) \right ) \right |\mathrm{d}s\mathrm{d}y\\
        &\leq L \int_{\mathbb{R}^{N+1}} \rho_{\eta}(t-s,x-y) ( \| x-y \|_{-1} + \mathbf{d}_{1,-1}(m(s),m(t)) ) \left ( 1 + \|D\tilde{v}_{n,N}(s,y) \| + \left \| D\tilde{v}^{\eta}_{n,N}(t,x) \right \| \right ) \mathrm{d}s\mathrm{d}y\\
        &\quad + L \int_{\mathbb{R}^{N+1}} \rho_{\eta}(t-s,x-y) \left |D\tilde{v}_{n,N}(s,y) - D\tilde{v}^{\eta}_{n,N}(t,x) \right | \mathrm{d}s\mathrm{d}y.
    \end{split}
    \end{equation}
    Since $D\tilde{v}_{n,N}$ and $D\tilde{v}^{\eta}_{n,N}$ are uniformly bounded, $m\in S$ and $D\tilde{v}_{n,N}$ is uniformly continuous on bounded sets, uniformly in $n,N$, it is easy to see that both integrals above converge to $0$ as $\eta\to 0$ uniformly on bounded subsets of $[\tau,T-\tau]\times H_N,\tau>0$.
%For the second term on the right-hand side, we have
 %   \begin{equation}
  %  \begin{split}
   %     &\int_{\mathbb{R}^{N+1}} \rho_{\eta}(t-s,x-y) L \left |D\tilde{v}_{n,N}(s,y) - D\tilde{v}^{\eta}_{n,N}(t,x) \right | \mathrm{d}s\mathrm{d}y\\
   %     &= \int_{\mathbb{R}^{N+1}} \rho_{\eta}(t-s,x-y) L \left |D\tilde{v}_{n,N}(s,y) - \int_{\mathbb{R}^{N+1}} D\tilde{v}_{n,N}(r,z) \rho_{\eta}(t-r,x-z) \mathrm{d}r\mathrm{d}z \right | \mathrm{d}s\mathrm{d}y \\
    %    &= \int_{\mathbb{R}^{N+1}} \rho_{\eta}(t-s,x-y) L \int_{\mathbb{R}^{N+1}} \left | D\tilde{v}_{n,N}(s,y) -D\tilde{v}_{n,N}(r,z) \right | \rho_{\eta}(t-r,x-z) \mathrm{d}r\mathrm{d}z \mathrm{d}s\mathrm{d}y.
  %  \end{split}
  %  \end{equation}
   % Due to Remark \ref{rem:Lipt}, $D\tilde{v}_{n,N}$ is uniformly continuous on bounded sets. Hence, the right-hand side converges to $0$ as $
   %\eta \to 0$.
    
    Thus, since $Dv_{n,N}^\eta(t,x)=D\tilde{v}_{n,N}^\eta(t,P_Nx)$ and then $D^2v_{n,N}^\eta(t,x)=P_ND^2\tilde{v}_{n,N}^\eta(t,P_Nx)P_N$ we derive from \eqref{equation_tilde_v}
    \begin{multline}\label{equation_v_eta_projection}
         \partial_t v^{\eta}_{n,N}(t,x) + \langle x,A_{n,N}^*Dv^{\eta}_{n,N}(t,x) \rangle + \frac12 \operatorname{Tr} [\sigma(P_Nx) \sigma^*(P_Nx)P_ND^2 v^{\eta}_{n,N}(t,x)P_N ] \\
        - \mathcal{H}(P_Nx,Dv^{\eta}_{n,N}(t,x),m(t)) = f^{\eta}_{n,N}(t,P_N x), \quad \text{for all }(t,x)\in [\eta,T-\eta] \times H.
    \end{multline}
    We choose $N(n) \geq n$ and $\eta(n) \leq 1/n$ such that for $t\in [0,T]$, $\|x\| \leq n$
    \[
        |v_{n,N(n)}^{\eta(n)}(t,x)-v_n(t,x)|\leq \frac{1}{n},\quad \|Dv_{n,N(n)}^{\eta(n)}(t,x)-Dv_n(t,x)\|\leq \frac{1}{n}
    \]
    and set $\hat{v}_n:=v_{n,N(n)}^{\eta(n)}$. Next, we want to eliminate the projections in \eqref{equation_v_eta_projection}. The error terms corresponding to the linear terms can be estimated as in the proof of Lemma \ref{lem:Kolmogorov.approx}. For the nonlinear term, we observe that
    \begin{equation}
        \left | \mathcal{H}(P_Nx,D\hat{v}_{n}(t,x),m(t)) - \mathcal{H}(x,D\hat{v}_n(t,x),m(t)) \right | \leq L \| P_N x - x \|_{-1} ( 1 + 2 \| D\hat{v}_{n}(t,x) \| ) 
    \end{equation}
    Thus, using that $\hat{v}_n = v^{\eta(n)}_{n,N(n)}$, we obtain from \eqref{equation_v_eta_projection} \begin{multline}\label{eq:equalityunNeta1_2}
        \partial_t \hat{v}_n(t,x) +\langle x, A^*D\hat{v}_n(t,x) \rangle + \frac12 \operatorname{Tr} [ \sigma(x) \sigma^*(x)D^2 \hat{v}_n(t,x) ] \\
        - \mathcal{H}(x,D\hat{v}_n(t,x),m(t)) =f_n(t,x),\quad\text{for all}\,\,(t,x)\in[\eta(n),T-\eta(n)]\times H,
    \end{multline}
    where $f_n:(0,T)\times H \to \mathbb{R}$ satisfies $|f_n(t,x)| \leq C(1+\|x\|)$ and $f_n(t,x) \to 0$ as $n\to \infty$. Now we conclude as in the proof of Lemma \ref{lem:Kolmogorov.approx}. We remark that a separate bound on $\|D_{H_{-1}}\hat{v}_n\|_{-1,\infty}$, independent of the $C^{1,1}_b(H_{-1})$ bounds in Assumption \ref{ass:HJB}(A6), is not needed.
\end{proof}

Now, let us state and prove the main result of this section.

\begin{theorem}\label{th:FPuniqueness}
Let Assumptions \ref{ass:HJB}(A1),(A4) be satisfied and let $w$ satisfy Assumption \ref{Assumption:1}. Suppose that \eqref{eq:KLipa2} holds. Then for every $m_0\in {\mathcal P}_1(H)$ there exists at most one weak solution of the
linear Fokker--Planck equation \eqref{FP_linear}.
\end{theorem}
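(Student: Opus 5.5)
Let $m^1,m^2\in S_{FP}$ be two weak solutions of \eqref{FP_linear} with the same initial datum $m_0$, and set $\mu=m^1-m^2$, a finite signed measure on $[0,T]\times H$ with $\int_0^T\int_H\|x\|\,|\mu|(t,\mathrm{d}x)\mathrm{d}t<\infty$. Subtracting the weak formulations with $t=T$ gives
\[
\int_H\varphi(T,x)\mu(T,\mathrm{d}x)=\int_0^T\int_H\big[\partial_t\varphi+L_0\varphi-\langle w,D\varphi\rangle\big](s,x)\,\mu(s,\mathrm{d}x)\,\mathrm{d}s
\]
for every $\varphi\in\mathcal{D}_T$. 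The plan is a duality argument: show that $\int_0^T\int_H f\,\mathrm{d}\mu=0$ for a class of $f$ rich enough to determine $\mu$, by solving the adjoint equation \eqref{equation_1} with right-hand side $f$. We then feed into the identity the smooth approximants from Lemma \ref{lem:Kolmogorov.approx}.

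\textbf{Step 1: choice of $f$.} Take $f(t,x)=\chi(t)\psi(P_Kx)$ with $\chi\in C_c^\infty((0,T))$ and $\psi\in C_c^\infty(H_K)$, $K\in\mathbb N$.
\begin{itemize}
\item It is bounded and satisfies \eqref{eq:KLipwf}: on $H_K$ the norms $\|\cdot\|$ and $\|\cdot\|_{-1}$ are equivalent, and $\|P_Kx-P_Ky\|_{-1}\le\|x-y\|_{-1}$.
\item It satisfies the $Df$ part of \eqref{eq:KLipa3}, by the same finite-dimensionality.
\end{itemize}
The issue is that Lemma \ref{lem:Kolmogorov.approx} also requires the $Dw$ condition in \eqref{eq:KLipa3}, whereas the theorem assumes only \eqref{eq:KLipa2}. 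I would first regularize $w$: replace it by $w^k(t,x)$, obtained by mollifying $w(t,P_kx)$ in the finitely many variables $x_k$. Then $w^k$ satisfies Assumption \ref{Assumption:1} with the same $C_1$ and also the semiconcavity bound \eqref{eq:KLipa3}, with a constant $C_3(k)$ that may blow up. We also have $w^k\to w$ uniformly on bounded sets, with $\|w^k\|_\infty\le\|w\|_\infty$.

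\textbf{Step 2: testing.} Apply Lemma \ref{lem:Kolmogorov.approx} with $w^k$ to get $\hat u_n^k\in\mathcal{D}_T$ solving
\[
\partial_t\hat u_n^k+L_0\hat u_n^k-\langle w^k,D\hat u_n^k\rangle=f_n^k,
\]
where $f_n^k\to f$ pointwise and $|f_n^k|\le C_k(1+\|x\|)$. Note that $\hat u_n^k(T,\cdot)$ need not vanish, so we handle the terminal term separately: $\hat u_n^k(T,\cdot)\to u^k(T,\cdot)=0$ uniformly on bounded sets. Since $\hat u_n^k$ is bounded uniformly in $n$ and the $|\mu|(T)$-mass of $\{\|x\|>R\}$ is small, the left-hand side tends to $0$. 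Plugging $\hat u_n^k$ into the identity gives
\[
\int\!\!\int f_n^k\,\mathrm{d}\mu-\int\!\!\int\langle w-w^k,D\hat u_n^k\rangle\,\mathrm{d}\mu\to 0 .
\]
Dominated convergence, with dominating function $C(1+\|x\|)\in L^1(|\mu|)$, lets $n\to\infty$ in the first term.

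\textbf{Step 3: removing the regularization of $w$.} The error term is bounded by $\int\!\!\int\|w-w^k\|\,\|D\hat u_n^k\|\,\mathrm{d}|\mu|$. By \eqref{Bound_for_DH-1uhat}, $\|D\hat u_n^k\|\le\|B^{1/2}\|M$, where $M$ depends only on $T,\|B\|,C_1,C_2,C$ and not on $C_3$. This independence from $C_3$ is exactly why that bound is in the lemma. Hence the error is at most $\|B^{1/2}\|M\int\!\!\int\|w-w^k\|\,\mathrm{d}|\mu|\to0$ as $k\to\infty$, by dominated convergence. We conclude $\int_0^T\int_H\chi(t)\psi(P_Kx)\,\mu(\mathrm{d}t,\mathrm{d}x)=0$ for all such $\chi,\psi,K$.

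\textbf{Step 4: conclusion.} These functions form a multiplicative class generating $\mathcal B((0,T)\times H)$, since cylinder sets generate the Borel $\sigma$-algebra of a separable $H$. A monotone class argument then gives $\mu=0$ on $[0,T]\times H$, so $m^1(t)=m^2(t)$ for a.e. $t$. To get this for every $t$, return to the weak formulation with time-independent test functions $\varphi(x)=\psi(P_Kx)$, $\psi\in C_c^\infty(H_K)$. These lie in $\mathcal{D}_T$, because $A^*D\varphi=A^*B^{1/2}B^{-1/2}P_KD\psi$ is bounded. The right-hand side depends only on $m(s)$ for a.e. $s$, so $\int\varphi\,\mathrm{d}m^1(t)=\int\varphi\,\mathrm{d}m^2(t)$ for every $t$, which identifies the two measures.

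\textbf{Main obstacle.} The delicate step is Step 3: matching the hypotheses of Lemma \ref{lem:Kolmogorov.approx} to those of the theorem. It hinges on the gradient bound \eqref{Bound_for_DH-1uhat} being uniform in the regularization of $w$. One must also check that $M$ does not depend on $\|f\|$ in a way that degenerates; here $f$ is fixed, so it does not.
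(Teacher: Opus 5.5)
Your proposal is correct, and its core is the same as the paper's Step 1. The shared part runs as follows:
\begin{itemize}
\item regularize $w$ by mollifying $w(t,P_Nx)$ in finitely many variables;
\item apply Lemma \ref{lem:Kolmogorov.approx} with the regularized drift;
\item use that the bound \eqref{Bound_for_DH-1uhat} depends on $C_1,C_2,T,\|B\|$ but not on $C_3$, so the regularization can be removed after letting $n\to\infty$.
\end{itemize}
The paper removes the regularization with an explicit choice $\|w-w^{\eta(k)}_{N(k)}\|\le 1/k$ on $B_k(0)$ plus a tail-mass estimate. Your dominated convergence argument is equivalent.

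The difference is in the measure-determining step. The paper proves that admissible right-hand sides are dense in $L^1([0,T]\times H;m_1+m_2)$. It cuts off an arbitrary $g\in C_b([0,T]\times H)$, replaces it by $g(t,P_Nx)$, extends this to $BUC([0,T]\times H_{-1})$, and regularizes by Lasry--Lions inf-sup convolutions in $H_{-1}$ to obtain functions satisfying \eqref{eq:KLipwf} and \eqref{eq:KLipa3}.

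You instead use the explicit class $\chi(t)\psi(P_Kx)$, which satisfies these conditions directly by equivalence of norms on $H_K$, and finish with a monotone class argument. This avoids the inf-sup convolution machinery, and the details check out:
\begin{itemize}
\item The class is closed under products, since $\psi_1(P_{K_1}x)\psi_2(P_{K_2}x)$ is a compactly supported smooth function of $P_{\max(K_1,K_2)}x$.
\item The constant $1$ on $(0,T)\times H$ is a bounded pointwise limit of members of the class, so no separate total-mass identity is needed.
\end{itemize}

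Finally, you upgrade equality for a.e.\ $t$ to equality for every $t$, using time-independent cylindrical test functions $\psi(P_Kx)\in\mathcal{D}_T$; these are admissible because $e_i\in\mathrm{Range}(B^{1/2})\subset\mathcal{D}(A^*)$. The paper does not spell this out: it concludes $m_1=m_2$ as measures on $[0,T]\times H$, which is uniqueness in $S_{FP}$. Your addition is nonetheless worthwhile, since the weak formulation refers to $m(t)$ for every $t$.
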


\begin{proof}
Suppose that $m_1,m_2$ are two weak solutions of \eqref{FP_linear}. Denote $\bar m=m_1-m_2$. Let $f\in C_b([0,T]\times H)$ satisfy \eqref{eq:KLipwf} and \eqref{eq:KLipa3}. In the first step, we are going to show that, after approximating $f$ and $w$, we can solve equation \eqref{equation_1} with solutions that are belong to the class of test functions $\mathcal{D}_T$ for equation \eqref{FP_linear}. After testing with these solutions, we pass to the limit in the approximation. In the second step, we show that the class of functions $f\in C_b([0,T]\times H)$ that satisfy \eqref{eq:KLipwf} and \eqref{eq:KLipa3} is measure-determining.

\textbf{Step 1:} In order to apply Lemma \ref{lem:Kolmogorov.approx}, we are going to approximate $w$ by functions that satisfy \eqref{eq:KLipa3}. We define $w_N(t,x) := w(t,P_N x)$, where $P_N$ is the orthogonal projection onto ${\rm span}\{e_1,\dots,e_N\}$. We have $w_N\in C_b([0,T]\times H,H)$, $\| w_N \|_\infty\leq\|w\|_\infty$ for all $N\in\mathbb{N}$, each $w_N$ satisfies Assumption \ref{Assumption:1} with the same constant $C_1$, and $w_N \to w$ uniformly on bounded subsets of $[0,T]\times H$ as $N\to \infty$. Using the same identification between $H_N$ and $\mathbb{R}^N$ as in the proof of Lemma \ref{lem:Kolmogorov.approx}, we can write
\[
w_N(t,x)=w_N(t,x^1,\dots,x^N).
\]
%for some function $\tilde{w}_N : [0,T]\times \mathbb{R}^N \to H$. 
Next, let $\rho \in C^{\infty}({\mathbb{R}}^{N})$ be a standard mollifier with $\text{supp}(\rho) \subset B_1(0)$, $\rho(x) \geq 0$, $\int_{{\mathbb{R}}^{N}} \rho(x) \mathrm{d}x = 1$, and let for $\eta>0$, $\rho_\eta(x) = (1/\eta^{N}) \rho(x/\eta)$. Then, we define $\tilde{w}_N^{\eta}:[0,T]\times \mathbb{R}^N \to H$ via
\begin{equation}
    \tilde{w}_N^{\eta}(t,x) = \int_{\mathbb{R}^{N+1}} w_N(t,y) \rho_{\eta}(t-s,x-y) \mathrm{d}s \mathrm{d}y \quad \text{for } t\in [0,T], x\in \mathbb{R}^N
\end{equation}
and then $w^{\eta}_N : [0,T]\times H \to H$ via
\begin{equation}
    w_N^{\eta}(t,x) = \tilde{w}_N^{\eta}(t,P_N x) \quad \text{for } t\in [0,T], x\in H.
\end{equation}

\iffalse
Then, we define $w_N^{\eta}:[0,T]\times H \to H$ via
\begin{equation}
\begin{split}
    w_N^{\eta}(t,x) &= \int_{\mathbb{R}^N} \tilde{w}_N(t,y^1,\dots,y^N) \prod_{i=1}^N \rho_{\eta}(x^i-y^i) \mathrm{d}y^1 \cdots \mathrm{d}y^N\\
    &=  \int_{\mathbb{R}^N} \tilde{w}_N(t,x^1-z^1,\dots,x^N-z^N) \prod_{i=1}^N \rho_{\eta}(z^i) \mathrm{d}z^1 \cdots \mathrm{d}z^N
\end{split}
\end{equation}
where $\rho_{\eta}(x) = (1/\eta) \rho(x/\eta)$ for $\eta>0$ are standard mollifiers.
\fi
    
By standard properties of finite dimensional mollification, we have $\| w_N^\eta \|_\infty\leq\|w_N\|_\infty\leq \|w\|_\infty$ for all $N\in\mathbb{N},\eta>0$, each $w_N^\eta$ satisfies Assumption \ref{Assumption:1} with the same constant $C_1$, and $w_N^\eta \to w_N$ uniformly on $[0,T]\times H$ as $\eta\to 0$. Moreover, a direct computation shows
\begin{equation}\label{Dw_m_delta_Lipschitz}
        \| ( Dw^{\eta}_N(t,x) - Dw^{\eta}_N(t,y) )(x-y) \|_{-1} \leq C_{N,\eta} \|x-y\|_{-1}^2
    \end{equation}
    for some constant $C_{N,\eta}$ depending on $N$ and $\eta$, i.e., the $w^{\eta}_N$ satisfy \eqref{eq:KLipa3}.
    
    \iffalse
    To show this, we note that $Dw_N^{\eta} : [0,T]\times H \to L(H)$ is given by
    \begin{equation}
    \begin{split}
        Dw^{\eta}_N(t,x) &= \sum_{k=1}^N \int_{\mathbb{R}^N} \tilde{w}_N(t,y^1,\dots,y^N) \rho'_{\eta}(x^k-y^k) \prod_{j=1,j\neq k}^m \rho_{\eta}(x^j-y^j) \mathrm{d}y^1 \cdots \mathrm{d}y^N \langle e_k,\cdot \rangle\\
        &= \sum_{k=1}^N \int_{\mathbb{R}^N} \tilde{w}_N(t,x^1-z^1,\dots,x^N-z^N) \rho'_{\eta}(z^k) \prod_{j=1,j\neq k}^N \rho_{\eta}(z^j) \mathrm{d}z^1 \cdots \mathrm{d}z^N \langle e_k,\cdot \rangle.
    \end{split}
    \end{equation}
    Thus,
    \begin{equation}
    \begin{split}
        &\|(Dw^{\eta}_N(t,x) - Dw^{\eta}_N(t,y))(x-y) \|_{-1}\\
        &= \Bigg \| \sum_{k=1}^N \int_{\mathbb{R}^N} ( \tilde{w}_N(t,x^1-z^1,\dots,x^N-z^N) - \tilde{w}_N(t,y^1-z^1,\dots,y^N-z^N) ) \rho'_{\eta}(z^k) \\
        &\qquad\qquad \times \prod_{j=1,j\neq k}^N \rho_{\eta}(z^j) \mathrm{d}z^1 \cdots \mathrm{d}z^N (x^k-y^k) \Bigg \|_{-1}\\
        &\leq C_{N,\eta} \|P_N( x-y) \|_{-1}\|P_N( x-y) \|\leq C_{N,\eta} \|x-y \|_{-1}^2
    \end{split}
    \end{equation}
    since $\|P_N( x-y) \|\leq C_N\|P_N( x-y) \|_{-1}\leq C_N\|x-y \|_{-1}$. 
      \fi

Now, for every $k\geq 1$ we take $N(k), \eta(k)$ such that $\|w(t,x)-w_{N(k)}^{\eta(k)}(t,x)\|\leq \frac{1}{k}$ for all $t\in[0,T], \|x\|\leq k$.
Let for every $k\geq 1$, $\hat{u}_n^k\in \mathcal{D}_T$ be the functions from Lemma \ref{lem:Kolmogorov.approx}
 such that 
 \[
    \partial_t \hat{u}_n^k(t,x) + L_0 \hat{u}_n^k(t,x) - \langle w_{N(k)}^{\eta(k)}(t,x), D\hat{u}_n^k(t,x) \rangle = f_n^k(t,x) \quad\text{in}\,\,(0,T)\times H
\]
for some functions $f_n^k$ as in that lemma. We notice that due to \eqref{Bound_for_DH-1uhat}, the functions $D\hat{u}_n^k$ in particular satisfy $\|D\hat{u}_n^k\|_\infty\leq \|B^{1/2}\|_{L(H)} M$. Hence we have
\[
    \partial_t \hat{u}_n^k(t,x) + L_0 \hat{u}_n^k(t,x) - \langle w, D\hat{u}_n^k(t,x) \rangle = f_n^k(t,x)+\langle w_{N(k)}^{\eta(k)}(t,x)-w(t,x), D\hat{u}_n^k(t,x) \rangle \quad\text{in}\,\,(0,T)\times H.
\]
Using the definition of a weak solution of \eqref{FP_linear} with $\hat{u}_n^k\in \mathcal{D}_T$, we have
\begin{equation}\label{equation_1k}
    \int_0^T \left(\int_H[f_n^k(t,x)+\langle w_{N(k)}^{\eta(k)}(t,x)-w(t,x), D\hat{u}_n^k(t,x) \rangle]\bar m(t,\mathrm{d}x)\right)\mathrm{d}t=\int_H\hat{u}_n^k(T,x) \bar m(T,\mathrm{d}x).
\end{equation}
We estimate
\[
\begin{split}
&
\left|\int_0^T\left(\int_H \langle w_{N(k)}^{\eta(k)}(t,x)-w(t,x), D\hat{u}_n^k(t,x) \rangle\bar m(t,\mathrm{d}x)\right)\mathrm{d}t\right|
\\
&
\leq \frac{2TM}{k}+2\|w\|_\infty\|B^{\frac{1}{2}}\|_{L(H)}M(m_1([0,T]\times (H\setminus B_k(0)))+m_2([0,T]\times (H\setminus B_k(0))))=:\omega(k)\to 0.
\end{split}
\]
Therefore, noticing that $\hat{u}_n^k(T,\cdot)\to 0$ as $n\to\infty$ uniformly on bounded subsets of $H$, letting $n\to\infty$ in \eqref{equation_1k}, we obtain
\[
\left|\int_0^T \left(\int_Hf(t,x)\bar m(t,\mathrm{d}x)\right)\mathrm{d}t\right|\leq \omega(k).
\]
Sending $k\to\infty$ above finally gives
\begin{equation}\label{eq:integral0}
    \int_0^T \left(\int_H f(t,x)\bar m(t,\mathrm{d}x)\right)\mathrm{d}t=0
\end{equation}
for all $f\in C_b([0,T]\times H)$ satisfying \eqref{eq:KLipwf} and \eqref{eq:KLipa3}.

\textbf{Step 2:} Now, let us show that every function $g\in C_b([0,T]\times H)$ can be approximated in $L^1([0,T]\times H;m_1+m_2)$ by functions in $C_b([0,T]\times H)$ satisfying \eqref{eq:KLipwf} and \eqref{eq:KLipa3}. Certainly, $g$ can be approximated by functions in $C_b([0,T]\times H)$ with bounded support by multiplying $g$ by continuous cut-off functions. So, let us suppose that $g\in C_b([0,T]\times H)$ and has bounded support. Let the support of $g$ be contained in $[0,T]\times B_R(0)$. We set for $N\geq 1$, $f_N(t,x)=g(t,P_N x), (t,x)\in [0,T]\times H$. Then $\|f_N\|_\infty\leq\|g\|_\infty$, $f_N\to g$ pointwise, and hence $f_N\to g$ in $L^1([0,T]\times H;m_1+m_2)$. Since $B$ is compact, $[0,T]\times B_R(0)$ is compact in $[0,T]\times H_{-1}$. Moreover, $\|P_Nx\|\leq C_N\|x\|_{-1}$ so $f_N$ is uniformly continuous in the $|\cdot|\times\|\cdot\|_{-1}$ norm. Thus $f_N$ can be extended to a function in $BUC([0,T]\times H_{-1})$. We then define for $k\geq 1$
\[
f_N^k(t,x):=\inf_{z\in H_{-1}}\sup_{y\in H_{-1}}\left\{f_N(t,y)-\frac{1}{2k}\|z-y\|_{-1}^2+\frac{1}{k}\|x-z\|_{-1}^2\right\}.
\]
The functions $f_N^k$ belong to $BUC([0,T]\times H_{-1})$, hence in particular to $C_b([0,T]\times H)$, they satisfy \eqref{eq:KLipwf}, \eqref{eq:KLipa3} and they converge uniformly to $f_N$ as $k\to\infty$, see \cite{lasry_lions_1986} or \cite[Appendix D.3]{fabbri_gozzi_swiech_2017}. Thus, \eqref{eq:integral0} holds for $f_N^k$. Therefore, it holds for $f_N$ and hence for $g$. We thus conclude that \eqref{eq:integral0} holds for all $g\in C_b([0,T]\times H)$ and finally, by density, for all characteristic functions of sets in ${\mathcal B}([0,T]\times H)$. Therefore $m_1=m_2$.
\end{proof}

\subsection{Existence}

The existence of a weak solution of equation \eqref{FP_linear} is shown via the corresponding SDE
\begin{equation}\label{SDE:1}
    \mathrm{d}X(t) = [ AX(t) - w(t,X(t)) ] \mathrm{d}t + \sigma(X(t)) \mathrm{d}W(t),\quad \mathcal{L}(X_0) = m_0.
\end{equation}

  \begin{theorem}\label{th:mildSDE}
Let $m_{0}\in\mathcal{P}_{2}(H)$ and let Assumptions \ref{ass:HJB}(A1),(A4) and Assumption \ref{Assumption:1} hold. Then, there exists a unique mild solution of  \eqref{SDE:1} in the following sense:
\begin{itemize}
\item[(i)]There is a filtered probability space $(\Omega, \mathcal{F}, (\mathcal{F}_{t})_{t\in[0,T]}, {\mathbb P})$, an $\mathcal{F}_{0}$-measurable $H$-valued random variable $X_{0}$ with $\mathcal{L}(X_{0})=m_{0}$, a $\Xi$-valued cylindrical Wiener process $(W(t))_{t\in[0,T]}$ with respect to $(\mathcal{F}_{t})_{t\in[0,T]}$, and an adapted $H$-valued process $(X^{w}(t))_{t\in[0,T]}$  with continuous trajectories such that
\[
X^{w}(t)=\mathrm{e}^{tA}X_0-\int_{0}^{t} \mathrm{e}^{(t-s)A} w(s,X^{w}(s)) \mathrm{d}s + \int_{0}^{t} \mathrm{e}^{(t-s)A} \sigma(X^{w}(s))\mathrm{d} W(s), \ \ \ \forall \,t\in [0,T];
\]
\item[(ii)] If another mild solution $\widehat{X}^{w}$ exists, then $\mathcal{L}(X^{w}(t))=\mathcal{L}(\widehat{X}^{w}(t))$ for every $t\in[0,T]$.
\end{itemize}
\end{theorem}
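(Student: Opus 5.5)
The plan is to reduce Theorem \ref{th:mildSDE} to the standard well-posedness theory for SDEs in Hilbert spaces with Lipschitz coefficients. The key observation is that the coefficients are globally Lipschitz even in the stronger norm of $H$. Indeed, since $\|x\|_{-1}^2=\langle Bx,x\rangle\leq \|B\|_{L(H)}\|x\|^2$, Assumption \ref{Assumption:1} gives
\[
\|w(t,x)-w(t,y)\|\leq C_1\|B\|_{L(H)}^{1/2}\|x-y\|.
\]
Similarly, \eqref{assumption2} in Assumption \ref{ass:HJB}(A4) gives
\[
\|\sigma(x)-\sigma(y)\|_{L_2(\Xi,H)}\leq C\|B\|_{L(H)}^{1/2}\|x-y\|.
\]
Moreover, $w$ and $\sigma$ are bounded and $w$ is continuous in $t$. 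The operator $A$ is maximal dissipative, so it generates a $C_0$-semigroup of contractions $\mathrm{e}^{tA}$.

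For existence, I would fix any filtered probability space carrying a cylindrical Wiener process $W$ on $\Xi$ and an independent $\mathcal{F}_0$-measurable $X_0$ with $\mathcal{L}(X_0)=m_0\in\mathcal{P}_2(H)$, so that $\mathbb{E}\|X_0\|^2<\infty$. I would then run the usual Banach fixed-point argument. Let $\mathcal{K}$ be the space $\mathcal{H}_2$ of progressively measurable $H$-valued processes with $\sup_{t}\mathbb{E}\|Y(t)\|^2<\infty$, and let $\mathcal{K}$ map $Y$ to
\[
\mathcal{K}(Y)(t)=\mathrm{e}^{tA}X_0-\int_0^t\mathrm{e}^{(t-s)A}w(s,Y(s))\,\mathrm{d}s+\int_0^t\mathrm{e}^{(t-s)A}\sigma(Y(s))\,\mathrm{d}W(s).
\]
The drift term is controlled by Cauchy--Schwarz and the contraction property of the semigroup. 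The stochastic convolution is controlled by the It\^o isometry together with $\|\mathrm{e}^{(t-s)A}\sigma\|_{L_2}\leq\|\sigma\|_{L_2}$. Together these give
\[
\mathbb{E}\|\mathcal{K}(Y)(t)-\mathcal{K}(Z)(t)\|^2\leq C\int_0^t\mathbb{E}\|Y(s)-Z(s)\|^2\,\mathrm{d}s .
\]
Hence $\mathcal{K}$ is a contraction in an equivalent weighted sup norm, or on a short interval which is then iterated, and it has a unique fixed point $X^w$. This is exactly \cite[Theorem 1.127]{fabbri_gozzi_swiech_2017} (or Da Prato--Zabczyk, Theorem 7.2), which I would simply invoke.

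The one point needing care is continuity of trajectories, since stochastic convolutions with a general $C_0$-semigroup need not be continuous. Because $\mathrm{e}^{tA}$ is a contraction semigroup, I would use the dilation/factorization result of Kotelenez (or Hausenblas--Seidler) for contraction semigroups, which gives a continuous version with $\mathbb{E}\sup_t\|\cdot\|^2<\infty$. As an alternative, the factorization method applies since $\sigma$ is bounded in $L_2(\Xi,H)$, so $p$-th moments are available for any $p$ (for the $X_0$ part use the strong continuity of $\mathrm{e}^{tA}$). This is the step I expect to be the main technical obstacle, but it is covered by the cited results for contraction semigroups.

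For (ii), pathwise uniqueness on a given probability space follows from the same Gronwall estimate applied to the difference of two mild solutions. Uniqueness in law on possibly different probability spaces then follows from the Yamada--Watanabe theorem in Hilbert spaces (Ondrej\'at). A simpler route is that the fixed-point solution is the limit of Picard iterates, and each iterate is a measurable functional of $(X_0,W)$ whose joint law is fixed. Any other mild solution $\widehat X^w$ coincides a.s. with the Picard limit built on its own space, so $\mathcal{L}(\widehat X^w(t))=\mathcal{L}(X^w(t))$ for every $t$.
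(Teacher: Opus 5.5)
Your proposal is correct and takes essentially the same route as the paper. The paper's proof consists only of citations: a filtered space carrying $X_0$ with law $m_0$ and $W$, existence and pathwise uniqueness of a continuous mild solution from Da Prato--Zabczyk (Theorem 7.2), and uniqueness in law from Fabbri--Gozzi--{\'{S}}wi{\k{e}}ch (Proposition 1.137). Your observation that $\|x\|_{-1}\leq\|B\|_{L(H)}^{1/2}\|x\|$ makes $w$ and $\sigma$ globally Lipschitz in $H$ is what makes those results apply, and your fixed-point, continuity and Picard/Yamada--Watanabe steps are the standard content behind the cited results.
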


\begin{proof}
The construction of a filtered probability space and a random variable $X_0$ is shown in the proof of \cite[Theorem 2.4]{federico_gozzi_swiech_2026}. Given a filtered probability space the existence of a unique mild solution is standard, see e.g. \cite[Theorem 7.2]{daprato_zabczyk_2014}. Uniqueness in law is proved for instance in \cite[Proposition 1.137]{fabbri_gozzi_swiech_2017}.
\end{proof}

\begin{lemma}\label{lem:mildSDEcontinuity}
Let $m_{0}\in\mathcal{P}_{2}(H)$. Let Assumptions \ref{ass:HJB}(A1),(A4) and Assumption \ref{Assumption:1} hold and let $X^{w}$ be the mild solution of \eqref{SDE:1}. Then, the following holds:
 
 \begin{itemize}
\item[(i)]  There is a constant $C_0$ depending on $T$ such that
 \begin{equation}\label{eq:moment}
\mathbb{E}\left[\sup_{t\in[0,T]}\|X^{w}(t)\|^2\right]\leq C_0\left(\int_H \|x\|^2 m_0(\mathrm{d}x)+\|w\|_{\infty}^2+\|\sigma\|_{\infty}^2\right)=:c_{m_0,\|w\|_{\infty}}.
  \end{equation}
  In particular, for all $t\in[0,T]$,
  \begin{equation}\label{eq:moment1}
\int_H \|x\|^2 {\mathcal{L}}(X^{w}(t))(\mathrm{d}x)\leq c_{m_0,\|w\|_{\infty}}.
  \end{equation}
  \item[(ii)] Let $m(t):={\mathcal{L}}(X^{w}(t))$, $t\in [0,T]$. Then $m\in \hat S$.
\item[(iii)]
 There is a constant $M_1$ depending on $\mathbb{E} [ \|X_0\|^2 ]$, $T$, $\|w\|_{\infty}$, $\|\sigma\|_{\infty}$, such that
  \begin{equation}\label{eq:cont1}
 {\mathbf{d}_{1,-1}}({\mathcal{L}}(X^{w}(t)),{\mathcal{L}}(X^{w}(s)))\leq M_1\sqrt{|t-s|}\quad \forall \,0\leq s,t\leq T.
\end{equation}
\end{itemize}
\end{lemma}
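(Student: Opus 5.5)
For (i), I will start from the mild formulation in Theorem \ref{th:mildSDE}(i) and estimate each term separately. Since $A$ is maximal dissipative, $\|\mathrm{e}^{tA}\|_{L(H)}\leq 1$, so $\sup_t\|\mathrm{e}^{tA}X_0\|^2\leq \|X_0\|^2$ and $\sup_t\|\int_0^t \mathrm{e}^{(t-s)A}w(s,X^w(s))\mathrm{d}s\|^2\leq T^2\|w\|_\infty^2$. The stochastic convolution $Z(t)=\int_0^t \mathrm{e}^{(t-s)A}\sigma(X^w(s))\mathrm{d}W(s)$ needs the maximal inequality for stochastic convolutions with contraction semigroups, e.g. \cite[Theorem 1.111]{fabbri_gozzi_swiech_2017}, or the factorization method, or a dilation argument. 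This gives $\mathbb{E}\sup_t\|Z(t)\|^2\leq C(T)\int_0^T\mathbb{E}\|\sigma(X^w(s))\|_{L_2(\Xi,H)}^2\mathrm{d}s\leq C(T)T\|\sigma\|_\infty^2$. Summing the three bounds gives \eqref{eq:moment}, and \eqref{eq:moment1} follows immediately.

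For (ii), $m(t)\in\mathcal{P}_2(H)\subset\mathcal{P}_1(H)$ by (i). For continuity in $\mathbf{d}_1$, I will use the coupling bound $\mathbf{d}_1(m(t),m(s))\leq \mathbb{E}\|X^w(t)-X^w(s)\|$. Trajectories are continuous, so $\|X^w(t)-X^w(s)\|\to 0$ a.s. as $s\to t$. This difference is dominated by $2\sup_r\|X^w(r)\|$, which is integrable by (i), so dominated convergence gives $m\in\hat S$.

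For (iii), I again bound $\mathbf{d}_{1,-1}(m(t),m(s))\leq\mathbb{E}\|X^w(t)-X^w(s)\|_{-1}$ for $s<t$. I write
\[
X^w(t)-X^w(s)=(\mathrm{e}^{(t-s)A}-I)X^w(s)-\int_s^t \mathrm{e}^{(t-r)A}w\,\mathrm{d}r+\int_s^t \mathrm{e}^{(t-r)A}\sigma\,\mathrm{d}W(r).
\]
The last two terms are bounded in $H$-norm, hence in $H_{-1}$-norm, by $\|B^{1/2}\|$ times $\|w\|_\infty|t-s|$ and $\|\sigma\|_\infty|t-s|^{1/2}$ respectively, using the Itô isometry for the stochastic integral.

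The main obstacle is the first term, since $\mathrm{e}^{\tau A}-I$ is not small in $L(H)$. Here I will use $A^*B^{1/2}\in L(H)$. For $x\in H$,
\[
B^{1/2}(\mathrm{e}^{\tau A}-I)x=\int_0^\tau (A^*B^{1/2})^*\mathrm{e}^{rA}x\,\mathrm{d}r,
\]
first checked on $x\in\mathcal{D}(A)$ via $\langle B^{1/2}Ay,z\rangle=\langle y,A^*B^{1/2}z\rangle$ and then extended by density. This yields $\|(\mathrm{e}^{\tau A}-I)x\|_{-1}\leq \tau\|A^*B^{1/2}\|\,\|x\|$. Taking expectations and using (i) gives a bound $C|t-s|\,\mathbb{E}\|X^w(s)\|\leq C|t-s|$. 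Combining the three terms and using $|t-s|\leq \sqrt{T}\sqrt{|t-s|}$ gives \eqref{eq:cont1}, with $M_1$ depending on $\mathbb{E}\|X_0\|^2$, $T$, $\|w\|_\infty$, $\|\sigma\|_\infty$, and on the fixed operators $A,B$.
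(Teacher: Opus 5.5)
Your proof is correct. Parts (i) and (ii) follow the paper's argument essentially verbatim: the mild formula, the contraction property of $\mathrm{e}^{tA}$ and the maximal inequality for the stochastic convolution, then a coupling bound with dominated convergence.

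For (iii) you take a different route. The paper applies It\^o's formula to $\|X^w(r)-X^w(s)\|_{-1}^2$ on $[s,t]$. This produces the terms $\langle X^w(r),A^*B(X^w(r)-X^w(s))\rangle$, $\langle w,B(X^w(r)-X^w(s))\rangle$ and $\operatorname{Tr}(\sigma\sigma^*B)$. Each is of order $t-s$ once $\mathbb{E}\sup_r\|X^w(r)\|^2$ is used, so $\mathbb{E}\|X^w(t)-X^w(s)\|_{-1}^2\le C(t-s)$.

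You instead split the increment using the flow identity of the mild solution. That identity follows by applying the bounded operator $\mathrm{e}^{(t-s)A}$ to the mild formula at time $s$ and commuting it with the integrals. You then use the semigroup estimate $\|(\mathrm{e}^{\tau A}-I)x\|_{-1}\le\tau\|A^*B^{1/2}\|_{L(H)}\|x\|$. You derive it correctly from $B^{1/2}Ay=(A^*B^{1/2})^*y$ on $\mathcal{D}(A)$ and density.

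What your route buys:
\begin{itemize}
\item It avoids It\^o's formula for mild solutions entirely. In particular you do not need the paper's remark that the cited version, stated for deterministic initial data, extends to random $X_0$.
\item It needs only first moments of $X^w(s)$.
\item It shows that the semigroup and drift contributions are actually Lipschitz in time in $L^1(\Omega;H_{-1})$. The $\sqrt{|t-s|}$ rate comes only from the stochastic convolution.
\end{itemize}

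What the paper's route buys: it is a single identity in which the $B$-condition enters through the natural quadratic function $\langle B(x-y),x-y\rangle$, and it gives an $L^2(\Omega;H_{-1})$ bound directly. Your decomposition yields the same $L^2$ bound if you square each term and use second moments, but the $L^1$ bound you prove is all that $\mathbf{d}_{1,-1}$ requires.
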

  
\begin{proof}
       (i) We have
    \begin{equation}
        \| X^w(t) \|^2 \leq C \| \mathrm{e}^{tA} X_0 \|^2 + C \left \| \int_0^t \mathrm{e}^{(t-s)A} w(s,X^w(s)) \mathrm{d}s \right \|^2 + C \left \| \int_0^t \mathrm{e}^{(t-s)A} \sigma(X^w(s)) \mathrm{d}W(s) \right \|^2.
    \end{equation}
    Moreover, by \cite[Proposition 1.112]{fabbri_gozzi_swiech_2017}
    \begin{equation}
    \begin{split}
        \mathbb{E} \left [ \sup_{t\in [0,T]} \left \| \int_0^t \mathrm{e}^{(t-s)A} \sigma(X^w(s)) \mathrm{d}W(s) \right \|^2 \right ] \leq C \mathbb{E} \left [ \int_0^T \| \sigma(X^w(s)) \|_{L_2(\Xi,H)}^2 \mathrm{d}s \right ].
    \end{split}
    \end{equation}
    Thus,
    \begin{equation}
    \begin{split}
        \mathbb{E} \left [ \sup_{t\in [0,T]} \| X^w(t) \|^2 \right ] \leq C \mathbb{E} \left [ \| X_0 \|^2 \right ] + C \mathbb{E} \left [ \int_0^T \| w(s,X^w(s)) \|^2 \mathrm{d}s \right ] + C \mathbb{E} \left [ \int_0^T \| \sigma(X^w(s)) \|_{L_2(\Xi,H)}^2 \mathrm{d}s \right ].
    \end{split}
    \end{equation}
    Due to the boundedness of $w$ and $\sigma$, this concludes the proof of (i).
    
    (ii) We have
    \begin{equation}
        \mathbf{d}^2_1(m(t),m(s)) \leq \mathbb{E} \left [ \| X^w(t) - X^w(s) \|^2 \right ].
    \end{equation}
    Since $(X^w(t))_{t\in [0,T]}$ has a continuous modification, the claim follows from (i) and the dominated convergence theorem.

    (iii) We first note that
    \begin{equation}
        \mathbf{d}^2_{1,-1}(\mathcal{L}(X^w(t)),\mathcal{L}(X^w(s))) \leq \mathbb{E} \left [ \| X^w(t) - X^w(s) \|_{-1}^2 \right ].
    \end{equation}
    Assume without loss of generality that $s<t$. Then, by It\^o's formula \cite[Proposition 1.164]{fabbri_gozzi_swiech_2017}, we have
    \begin{equation}\label{Holder_continuity_SDE_equation_1}
    \begin{split}
        &\mathbb{E} \left [ \| X^w(t) - X^w(s) \|_{-1}^2 \right ]\\
        &= 2 \int_s^t \mathbb{E} \left [ \langle X^w(r), A^* B(X^w(r)-X^w(s)) \rangle - \langle w(r,X^w(r)), B(X^w(r) - X^w(s)) \rangle \right ] \mathrm{d}r\\
        &\quad + \int_s^t \mathbb{E} \left [ \operatorname{Tr} \left ( \sigma(X^w(r)) \sigma(X^w(r))^* B \right ) \right ] \mathrm{d}r.
    \end{split}
    \end{equation}
    We remark that \cite[Proposition 1.164]{fabbri_gozzi_swiech_2017} is stated for a deterministic initial condition, however it is clear from its proof that it is also true for a random initial condition. For the first term on the right-hand side, we obtain
    \begin{equation}
    \begin{split}
        &\left | \int_s^t \mathbb{E} \left [ \langle X^w(r), A^* B(X^w(r)-X^w(s)) \rangle - \langle w(r,X^w(r)), B(X^w(r) - X^w(s)) \rangle \right ] \mathrm{d}r \right |\\
        &\leq \int_s^t \mathbb{E} \left [ \| X^w(r) \| \| X^w(r) - X^w(s) \| + \| w(r,X^w(r)) \| \| X^w(r) - X^w(s) \| \right ] \mathrm{d}r\\
        &\leq C \mathbb{E} \left [ \sup_{r\in [0,T]} \| X^w(r) \|^2 \right ] (t-s).
    \end{split}
    \end{equation}
    For the second term on the right-hand side of \eqref{Holder_continuity_SDE_equation_1}, since $\sigma$ is bounded, we obtain
    \begin{equation}
        \left | \int_s^t \mathbb{E} \left [ \operatorname{Tr} \left ( \sigma(X^w(r)) \sigma(X^w(r))^* B \right ) \right ] \mathrm{d}r \right | \leq C (t-s).
    \end{equation}
    Now, the claim follows from part (i).
\end{proof}
 
We remark that in \cite{federico_gozzi_swiech_2026} a counterpart of the uniform continuity estimate (iii) with respect to the $\mathbf{d}_{1}$ distance was proved using compactness of the semigroup generated by $A$.
 
\begin{theorem}\label{th:uniqFP}
    Let Assumptions \ref{ass:HJB}(A1) and \ref{Assumption:1} hold, let $m_{0}\in\mathcal{P}_{2}(H)$ and let $X^{w}$ be the mild solution of \eqref{SDE:1}. Then, $m^w(t):={\mathcal{L}}(X^{w}(t))$ is the unique weak solution of \eqref{FP_linear} in $S_{FP}$. Moreover, $m^w\in S\cap\hat S$.
\end{theorem}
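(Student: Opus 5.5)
The plan has three parts: show that $m^w$ solves \eqref{FP_linear} weakly via It\^o's formula, get uniqueness from Theorem \ref{th:FPuniqueness}, and read off $m^w\in S\cap\hat S$ from Lemma \ref{lem:mildSDEcontinuity}.

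\textbf{Membership in $S\cap\hat S$.} Lemma \ref{lem:mildSDEcontinuity}(ii) gives $m^w\in\hat S$. As observed in Section \ref{sec:notass}, every element of $\hat S$ also lies in $C([0,T];\mathcal{P}_1(H_{-1}))$. Estimate \eqref{eq:moment1} bounds $\sup_t\int_H\|x\|\,m^w(t,\mathrm{d}x)$, so $m^w\in S$. Since $S\subset S_{FP}$, the measure $m^w(t,\mathrm{d}x)\mathrm{d}t$ belongs to $S_{FP}$.

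\textbf{Weak solution property.} Fix $\varphi\in\mathcal{D}_T$ and apply It\^o's formula for mild solutions to $\varphi(t,X^w(t))$. The form of It\^o's formula valid for test functions with $A^*D\varphi$ bounded and continuous is \cite[Proposition 1.164]{fabbri_gozzi_swiech_2017}, already used in the proof of Lemma \ref{lem:mildSDEcontinuity}(iii); it allows a random initial condition by the same remark made there. It yields
\[
\varphi(t,X^w(t))=\varphi(0,X_0)+\int_0^t\big[\partial_t\varphi+\langle X^w,A^*D\varphi\rangle+\tfrac12\operatorname{Tr}(\sigma\sigma^*D^2\varphi)-\langle w,D\varphi\rangle\big](s,X^w(s))\,\mathrm{d}s+\int_0^t\langle D\varphi(s,X^w(s)),\sigma(X^w(s))\mathrm{d}W(s)\rangle.
\]
The stochastic integral is a true martingale, because $D\varphi$ and $\sigma$ are bounded. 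Taking expectations therefore gives exactly the identity of Definition \ref{definition_weak_solution}. Fubini is justified by the linear growth bound $|f(s,x)|\le C(1+\|x\|)$ together with \eqref{eq:moment}.

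If the cited It\^o formula requires more regularity than $\mathcal{D}_T$ provides, I would fall back on an approximation. Replace $A$ by $A_n$, so that $X_n$ is a strong solution and the classical It\^o formula applies. Then pass to the limit using $X_n\to X^w$ in $L^2$, continuity and boundedness of $A^*D\varphi$, and the convergence $\langle X_n,A_n^*D\varphi\rangle\to\langle X^w,A^*D\varphi\rangle$. This last convergence follows from $A_n^*D\varphi=n(n-A^*)^{-1}A^*D\varphi\to A^*D\varphi$ with uniform bounds, combined with dominated convergence.

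\textbf{Uniqueness.} Theorem \ref{th:FPuniqueness} gives at most one weak solution in $S_{FP}$ under the standing assumptions (A1), (A4), Assumption \ref{Assumption:1} and \eqref{eq:KLipa2}, so $m^w$ is that solution. The main obstacle is that the statement lists only (A1) and Assumption \ref{Assumption:1}. I would read the hypotheses of Theorem \ref{th:mildSDE} and Theorem \ref{th:FPuniqueness}, namely (A4) and \eqref{eq:KLipa2}, as implicitly in force, and invoke Theorem \ref{th:FPuniqueness} directly.
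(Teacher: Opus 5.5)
Your proposal is correct and follows the paper's proof. The paper likewise gets the weak-solution property from It\^o's formula \cite[Proposition 1.164]{fabbri_gozzi_swiech_2017}, uniqueness from Theorem \ref{th:FPuniqueness}, and $m^w\in S\cap\hat S$ from Lemma \ref{lem:mildSDEcontinuity}. You are also right that (A4) and \eqref{eq:KLipa2} must be tacitly assumed: the paper's own appeal to Theorem \ref{th:FPuniqueness}, and to Theorem \ref{th:mildSDE} for the existence of $X^w$, relies on them as well.
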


\begin{proof}
    The fact that $m^w(t):={\mathcal{L}}(X^{w}(t))$ is a weak solution of \eqref{FP_linear} is a direct consequence of It\^{o}'s formula, see
    \cite[Proposition 1.164]{fabbri_gozzi_swiech_2017}.
    Uniqueness follows from Theorem \ref{th:FPuniqueness}. By Lemma \ref{lem:mildSDEcontinuity}, $m^w\in S\cap \hat S$.
\end{proof}
  
%  \begin{lemma}\label{lem:mildSDEcompactness}
 % Let Assumption \ref{Assumption:1} hold and let $m^{w}$ be the weak solution of
 %\eqref{FP_linear}. Then
% \begin{equation}\label{eq:tails}
% \sup_{t\in[0,T]}\int_{H_{-1}}\sum_{k=N+1}^\infty\langle x,{\tilde e}_k\rangle_{-1}^2 m^w(t)(\mathrm{d}x)\leq \|Q_N B^{\frac 12}\|c_{m_0,\|w\|_{\infty}}
 %\end{equation}
 %for every $N\geq 1$.
  %  \end{lemma}

%\section{The HJB Equation: Viscosity Solutions}

%For given $m : [0,T]\to\mathcal{P}_1(H)$, let $v : [0,T]\times H\to \mathbb{R}$ be the viscosity solution of \eqref{HJB}. Then, under certain %assumptions, we know that $v(t,\cdot) \in C^{1,1}(H_{-1})$, for all $t\in [0,T]$, see \cite{defeo_swiech_wessels_2025}.

\section{The MFG System: Existence of Solutions}\label{sec:existence}

In this section we assume that Assumption \ref{ass:HJB} is satisfied and $m_0\in \mathcal{P}_2(H)$.

\begin{definition}\label{definition_MFG_solution}
    A pair $(v,m)$ is a solution of the MFG system \eqref{HJB}--\eqref{FP} if $v$ is a viscosity solution of \eqref{HJB} satisfying $v(t,\cdot)\in C^{1,1}_b(H_{-1})$ for all $t\in [0,T]$, and $m$ is a weak solution of \eqref{FP_linear} in the sense of Definition \ref{definition_weak_solution}, where $w(t,x)= \mathcal{H}_p(x,Dv(t,x),m(t))$ satisfies Assumption \ref{Assumption:1}.
\end{definition}

In order to prove the existence of a solution of the MFG system, we carefully adapt the classical strategy based on Tikhonov's fixed point theorem to our setting. Let us first introduce the relevant spaces.

Let $\hat M_1$ be the constant from Assumption \ref{ass:HJB}(A6). We set
\[
\|\mathcal{H}_p\|_{\infty}:=\sup \left \{\|\mathcal{H}_p(x,p,\mu)\|:x,p\in H, \mu\in\mathcal{P}_1(H), \|p\|\leq \hat M_1 \right \}.
\]
We define the set $Q_{m_0}$ by setting
\[
Q_{m_0}:=\left \{\mu\in \mathcal{P}_1(H): 
\int_H \|x\|^2 \mu(\mathrm{d}x)\leq c_{m_0,\|\mathcal{H}_p\|_{\infty}}\right \},
\]
where $c_{m_0,\|\mathcal{H}_p\|_{\infty}}$ is the constant from \ref{lem:mildSDEcontinuity}(i) with $\|w\|_{\infty}$ replaced by $\| \mathcal{H}_p \|_{\infty}$.

\begin{lemma}
    Let Assumption \ref{ass:HJB} be satisfied and $m_0\in \mathcal{P}_2(H)$. Then, the set $Q_{m_0}$ is compact in $\mathcal{P}_1(H_{-1})$.
\end{lemma}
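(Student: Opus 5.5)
We view $Q_{m_0}$ as a subset of $\mathcal{P}_1(H_{-1})$ via the embedding of $\mathcal{P}_1(H)$ into $\mathcal{P}_1(H_{-1})$ described in Section \ref{sec:notass}. Compactness will follow from three facts: $Q_{m_0}$ is tight in $H_{-1}$, it has uniformly integrable first moments in $H_{-1}$, and it is closed. We then invoke the standard characterization of relatively compact sets in $(\mathcal{P}_1(H_{-1}),\mathbf{d}_{1,-1})$: a set is relatively compact if and only if it is tight and has uniformly integrable first moments (\cite[Proposition 7.1.5]{ambrosio_gigli_savare_2008}).

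\textbf{Tightness.} For $R>0$ let $K_R=\{x\in H:\|x\|\leq R\}$. Since $B^{1/2}$ is compact, $K_R$ is compact in $H_{-1}$. By Chebyshev's inequality, for every $\mu\in Q_{m_0}$,
\[
\mu(H_{-1}\setminus K_R)\leq \frac{c_{m_0,\|\mathcal{H}_p\|_\infty}}{R^2}.
\]
This gives uniform tightness.

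\textbf{Uniform integrability.} We have $\|x\|_{-1}\leq \|B^{1/2}\|_{L(H)}\|x\|$, so the second moments in $H_{-1}$ are uniformly bounded over $Q_{m_0}$. Hence
\[
\int_{\{\|x\|_{-1}>R\}}\|x\|_{-1}\,\mu(\mathrm{d}x)\leq \frac{1}{R}\int \|x\|_{-1}^2\,\mu(\mathrm{d}x)\leq \frac{C}{R},
\]
uniformly in $\mu\in Q_{m_0}$. So $Q_{m_0}$ is relatively compact.

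\textbf{Closedness.} This is the main obstacle. Let $\mu_k\in Q_{m_0}$ with $\mathbf{d}_{1,-1}(\mu_k,\mu)\to 0$, where $\mu\in\mathcal{P}_1(H_{-1})$. We must show that $\mu$ is concentrated on $H$ and that $\int_H\|x\|^2\mu(\mathrm{d}x)\leq c_{m_0,\|\mathcal{H}_p\|_\infty}$.

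The key observation is that $x\mapsto\|x\|^2$, extended by $+\infty$ on $H_{-1}\setminus H$, is lower semicontinuous on $H_{-1}$. Indeed, it equals
\[
\sup_N \|P_Nx\|^2=\sup_N\sum_{i\leq N}\lambda_i^{-1}\langle x,\tilde e_i\rangle_{-1}^2,
\]
where $\tilde e_i=\lambda_i^{1/2}... $ denotes the normalized basis of $H_{-1}$ built from the eigenvectors $e_i$. Each $x\mapsto\|P_Nx\|^2$ is continuous on $H_{-1}$, because $P_N$ extends continuously to $H_{-1}$ and is finite-rank. Equivalently, closed balls of $H$ are closed in $H_{-1}$, being compact there.

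Weak convergence then gives, for each $N$ and $R$,
\[
\int \min(\|P_Nx\|^2,R)\,\mu(\mathrm{d}x)=\lim_k\int \min(\|P_Nx\|^2,R)\,\mu_k(\mathrm{d}x)\leq c_{m_0,\|\mathcal{H}_p\|_\infty}.
\]
Letting $R\to\infty$ and $N\to\infty$ by monotone convergence yields $\int\|x\|^2\,\mu(\mathrm{d}x)\leq c_{m_0,\|\mathcal{H}_p\|_\infty}$, with the integrand equal to $+\infty$ off $H$. Therefore $\mu(H_{-1}\setminus H)=0$.

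Since $\mathcal{B}(H_{-1})\cap H=\mathcal{B}(H)$, the restriction of $\mu$ to $H$ is a Borel probability measure on $H$. Its second moment is finite, so it lies in $\mathcal{P}_1(H)$, and hence $\mu\in Q_{m_0}$. Thus $Q_{m_0}$ is closed, and together with relative compactness it is compact in $\mathcal{P}_1(H_{-1})$.
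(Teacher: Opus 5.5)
Your proof is correct and follows the paper's own structure: tightness from the compactness of $H$-balls in $H_{-1}$, uniform integrability of first moments, and closedness via the bounded continuous truncations $\|P_Nx\|^2\wedge R$ followed by monotone convergence. The only difference is that you get $\mu(H_{-1}\setminus H)=0$ directly from the fact that $\sup_N\|P_Nx\|^2=+\infty$ off $H$ (note that $\tilde e_i=\lambda_i^{-1/2}e_i$). The paper instead proves this separately, using cutoff functions that separate a compact $K\subset H_{-1}\setminus H$ from $B_R(0)$ together with inner regularity, so your version is slightly more economical.
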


\begin{proof}
Since for every $R>0$, the closed ball $\overline B_R(0)\subset H$ is compact in $H_{-1}$, $Q_{m_0}$ is tight in $\mathcal{P}(H_{-1})$. Moreover, $Q_{m_0}$ has uniformly integrable first moments (see \cite[Equation (5.1.20)]{ambrosio_gigli_savare_2008}), and hence, by \cite[Proposition 7.1.5]{ambrosio_gigli_savare_2008}, it is relatively compact in $\mathcal{P}_1(H_{-1})$.

Let us show that $Q_{m_0}$ is closed. To this end, let $(\mu_n)_{n\in\mathbb{N}} \subset Q_{m_0}$ be a sequence in $\mathcal{P}_1(H_{-1})$ converging to $\mu\in \mathcal{P}_1(H_{-1})$. We need to show that $\mu \in \mathcal{P}_1(H)$, and
\begin{equation}\label{second_moment_condition}
    \int_H \| x\|^2 \mu(\mathrm{d}x) \leq c_{m_0,\| \mathcal{H}_p\|_{\infty}}.
\end{equation}
For a compact set $K\subset H_{-1}\setminus H$ and $R>0$, we can find a continuous function $f:H_{-1} \to \mathbb{R}$ such that $0\leq f \leq 1$, $f\equiv 1$ on $K$, and $f\equiv 0$ on $B_R(0)$. First, we note that
\begin{equation}
    \int_{H_{-1}} f(x) \mu_n(\mathrm{d}x) \leq \mu_n(H_{-1} \setminus B_R(0)) \leq \frac{1}{R^2} \int_H \| x\|^2 \mu_n(\mathrm{d}x) \leq \frac{c_{m_0,\|\mathcal{H}_p\|_{\infty}}}{R^2}.
\end{equation}
Thus,
\begin{equation}
    \mu(K) \leq \int_{H_{-1}} f(x) \mu(\mathrm{d}x) = \lim_{n\to\infty} \int_{H_{-1}} f(x) \mu_n(\mathrm{d}x) \leq \frac{c_{m_0,\|\mathcal{H}_p\|_{\infty}}}{R^2}.
\end{equation}
Taking the limit $R\to \infty$ shows that $\mu(K)=0$ for all compact sets $K \subset H_{-1}\setminus H$. Hence, $\mu \in \mathcal{P}_1(H)$. Moreover, we have
\begin{equation}
\begin{split}
    \int_{H} \| x\|^2 \mu(\mathrm{d}x) &= \sup_{N\in \mathbb{N}} \int_H ( \| P_N x\|^2 \wedge N ) \mu(\mathrm{d}x) = \sup_{N\in\mathbb{N}} \lim_{n\to\infty} \int_H (\|P_N x\|^2 \wedge N) \mu_n(\mathrm{d}x)\\
    &\leq \lim_{n\to\infty} \sup_{N\in\mathbb{N}} \int_H ( \|P_N x\|^2 \wedge N) \mu_n(\mathrm{d}x) = \lim_{n\to\infty} \int_H \|x\|^2 \mu_n(\mathrm{d}x) \leq c_{m_0,\| \mathcal{H}_p \|_{\infty}},
\end{split}
\end{equation}
which shows \eqref{second_moment_condition}, and hence $Q_{m_0}$ is compact.
\end{proof}

Next, we define 
\begin{equation}
C_{m_0}:= \left \{m:[0,T]\to Q_{m_0}: m(0)=m_0\,\,\text{and}\,\,  \mathbf{d}_{1,-1}(m(t),m(s))\leq M_1\sqrt{|t-s|}\right \},
\end{equation}
where $M_1$ is the constant from Lemma \ref{lem:mildSDEcontinuity}(iii) with $\|w\|_\infty$ replaced by $\|\mathcal{H}_p\|_{\infty}$. Note that $C_{m_0} \subset S$.

\begin{lemma}\label{lemma:CC}
Let Assumption \ref{ass:HJB} be satisfied and $m_0\in \mathcal{P}_2(H)$. Then, the set $C_{m_0}$ is convex and compact in $(\tilde{S},\rho_{-1,\infty})$.
\end{lemma}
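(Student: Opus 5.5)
Convexity and compactness are handled separately.

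\emph{Convexity.} Let $m_1,m_2\in C_{m_0}$ and $\lambda\in[0,1]$, and set $m:=\lambda m_1+(1-\lambda)m_2$. Then $m(0)=m_0$, and $m(t)\in\mathcal{P}_1(H)$ for every $t$. Since the second moment is linear in the measure, $m(t)\in Q_{m_0}$. For the H\"older bound we use the convexity of $\mathbf{d}_{1,-1}$ under mixtures, which follows from Kantorovich duality, $\mathbf{d}_{1,-1}(\mu,\nu)=\sup\{\int f\,\mathrm{d}(\mu-\nu): f \text{ 1-Lipschitz on } H_{-1}\}$. The supremum of a sum is at most the sum of the suprema, so
\[
\mathbf{d}_{1,-1}(m(t),m(s))\leq \lambda\,\mathbf{d}_{1,-1}(m_1(t),m_1(s))+(1-\lambda)\,\mathbf{d}_{1,-1}(m_2(t),m_2(s))\leq M_1\sqrt{|t-s|}.
\]
Alternatively, one can glue the two optimal couplings with weights $\lambda,1-\lambda$. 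Either way, $C_{m_0}$ is convex.

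\emph{Compactness.} We apply the Arzel\`a--Ascoli theorem in $C([0,T];(\mathcal{P}_1(H_{-1}),\mathbf{d}_{1,-1}))$. All $m\in C_{m_0}$ take values in $Q_{m_0}$, which is compact in $\mathcal{P}_1(H_{-1})$ by the preceding lemma. The family is uniformly equicontinuous because of the common modulus $M_1\sqrt{|t-s|}$. Hence $C_{m_0}$ is relatively compact in $(\tilde S,\rho_{-1,\infty})$, and in particular $C_{m_0}\subset\tilde S$ since each element is continuous.

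It remains to prove closedness. Let $m_k\in C_{m_0}$ with $\rho_{-1,\infty}(m_k,m)\to0$. For each $t$, $m_k(t)\to m(t)$ in $\mathcal{P}_1(H_{-1})$. Since $Q_{m_0}$ is closed in $\mathcal{P}_1(H_{-1})$, this gives $m(t)\in Q_{m_0}$, so $m(t)$ is concentrated on $H$ and belongs to $\mathcal{P}_1(H)$. The constraint $m(0)=m_0$ passes to the limit trivially. The H\"older estimate passes to the limit by the triangle inequality:
\[
\mathbf{d}_{1,-1}(m(t),m(s))\leq \mathbf{d}_{1,-1}(m(t),m_k(t))+M_1\sqrt{|t-s|}+\mathbf{d}_{1,-1}(m_k(s),m(s)),
\]
and letting $k\to\infty$ yields the bound. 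Thus $C_{m_0}$ is closed, and together with relative compactness it is compact.

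The only delicate point is identifying the limit measures as elements of $\mathcal{P}_1(H)$ with the second-moment bound. This is exactly the closedness of $Q_{m_0}$ already established, so no real obstacle remains.
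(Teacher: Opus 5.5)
Your proposal is correct and follows essentially the same route as the paper: convexity of $\mathbf{d}_{1,-1}$ under mixtures (you use Kantorovich--Rubinstein duality on the Polish space $H_{-1}$, where the paper cites the convexity of optimal transport cost), relative compactness via Arzel\`a--Ascoli with values in the compact set $Q_{m_0}$, and closedness from the closedness of $Q_{m_0}$ plus passing the H\"older bound to the limit. You also check explicitly that convex combinations stay in $Q_{m_0}$ because the second-moment constraint is linear in the measure; the paper leaves this step implicit.
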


\begin{proof}
    Let $m_1,m_2\in C_{m_0}$, $\lambda\in [0,1]$. Noting that the $\mathbf{d}_{1,-1}$-Wasserstein distance between two measures coincides with the optimal transport cost associated with the cost function $c:H_{-1}\times H_{-1} \to \mathbb{R}$, $c(x,y) = \| x-y\|_{-1}$, we deduce from \cite[Theorem 4.8, page 47]{villani_2009} or the discussion at the beginning of \cite[Section 7.4]{villani_2003}
    \begin{equation}
    \begin{split}
        &\mathbf{d}_{1,-1}(\lambda m_1(t) + (1-\lambda) m_2(t), \lambda m_1(s) + (1-\lambda) m_2(s))\\
        &\leq \lambda \mathbf{d}_{1,-1}(m_1(t),m_1(s)) + (1-\lambda) \mathbf{d}_{1,-1}(m_2(t),m_2(s)) \leq M_1 \sqrt{|t-s|},
    \end{split}
    \end{equation}
    which implies the convexity of $C_{m_0}$.

     The relative compactness of $C_{m_0}$ follows from compactness of $Q_{m_0}$ and the Arzel\`a--Ascoli's theorem for functions with values in complete metric spaces.
    
 Finally, let us show that $C_{m_0}$ is closed. To this end, let $\{m_{n}\}_{n\in\mathbb{N}}\subset C_{m_0}$ be such that $m_{n}\to m\in \tilde{S}$. This means that  
 \[
 \sup_{t\in [0,T]}\mathbf{d}_{1,-1}(m_{n}(t), m(t))\to 0.
 \]
 Obviously, $m(0)=m_0$ and since $Q_{m_0}$ is closed, we have $m(t)\in Q_{m_0}$ for every $t\in [0,T]$. The inequality $\mathbf{d}_{1,-1}(m(t),m(s))\leq M_1\sqrt{|t-s|}$ is also obvious by uniform convergence. Therefore $C_{m_0}$ is closed in 
 $(\tilde{S},\rho_{-1,\infty})$.
 \end{proof}
To apply Tikhonov's fixed point theorem we follow the strategy from  \cite{federico_gozzi_swiech_2026}.
We embed $C_{m_0}$ into the locally convex topological vector space $C([0,T];\mathcal{M}(H_{-1}))$, endowed with the topology induced by the family of seminorms 
\[
\left|m(\cdot)\right|_{f}:=\sup_{t\in[0,T]}\left|\int_{H_{-1}} f(x) m(t,\mathrm{d}x)\right|, \ \ \ \ f\in C_{b}(H_{-1}).
\]
We call this topology on $C([0,T];\mathcal{M}(H_{-1}))$ the {\it uniform-weak topology} and denote by $\tau_{\mathbf{uw}}$. The following lemma is proved exactly as \cite[Lemma 5.4]{federico_gozzi_swiech_2026}.

\begin{lemma}\label{lem:topology}
Let Assumption \ref{ass:HJB} be satisfied and $m_0\in \mathcal{P}_2(H)$. Then, the following inclusion holds:
\[
C_{m_0} \ \subset \ C([0,T];\mathcal{M}(H_{-1})).
\]
Moreover, the topology induced by $\rho_{-1,\infty}$ on $C_{m_0}$ is the same as the one induced   by $\tau_{\mathbf{uw}}$.
\end{lemma}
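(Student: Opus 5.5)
The proof splits into two parts. First I show the inclusion $C_{m_0}\subset C([0,T];\mathcal{M}(H_{-1}))$. Then I show that on $C_{m_0}$ the metric topology of $\rho_{-1,\infty}$ and $\tau_{\mathbf{uw}}$ coincide. Throughout, the main tool is that $\mathbf{d}_{1,-1}$-convergence implies weak convergence in $\mathcal{P}(H_{-1})$, while compactness of $C_{m_0}$ (Lemma \ref{lemma:CC}) upgrades one topology to the other.

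\textbf{Inclusion.} Let $m\in C_{m_0}$ and $t_n\to t$. Then $\mathbf{d}_{1,-1}(m(t_n),m(t))\leq M_1\sqrt{|t_n-t|}\to 0$. By \cite[Proposition 7.1.5]{ambrosio_gigli_savare_2008}, this gives $\int_{H_{-1}} f\,\mathrm{d}m(t_n)\to\int_{H_{-1}} f\,\mathrm{d}m(t)$ for every $f\in C_b(H_{-1})$. Hence $t\mapsto m(t)$ is continuous into $\mathcal{M}(H_{-1})$ with the weak topology, so $m\in C([0,T];\mathcal{M}(H_{-1}))$.

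\textbf{$\rho_{-1,\infty}$ is finer than $\tau_{\mathbf{uw}}$.} Since $(C_{m_0},\rho_{-1,\infty})$ is metric, it suffices to show the following: if $\rho_{-1,\infty}(m_k,m)\to0$ in $C_{m_0}$, then $|m_k-m|_f\to0$ for each fixed $f\in C_b(H_{-1})$. Suppose not. Then there are $\varepsilon>0$ and times $t_k$ with $|\int f\,\mathrm{d}(m_k(t_k)-m(t_k))|\geq\varepsilon$. Pass to a subsequence with $t_k\to t$. We have
\[
\mathbf{d}_{1,-1}(m_k(t_k),m(t))\leq \rho_{-1,\infty}(m_k,m)+M_1\sqrt{|t_k-t|}\to0,
\]
and similarly $m(t_k)\to m(t)$. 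Both sequences therefore converge weakly to $m(t)$, which contradicts the lower bound $\varepsilon$.

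\textbf{$\tau_{\mathbf{uw}}$ is finer than $\rho_{-1,\infty}$.} The identity map $(C_{m_0},\rho_{-1,\infty})\to(C_{m_0},\tau_{\mathbf{uw}})$ is continuous by the previous step. Its domain is compact by Lemma \ref{lemma:CC}. The target is Hausdorff: if $|m-m'|_f=0$ for all $f\in C_b(H_{-1})$, then $m(t)=m'(t)$ for all $t$, since finite Borel measures on a metric space are determined by their integrals against bounded continuous functions. A continuous bijection from a compact space onto a Hausdorff space is a homeomorphism, so the two topologies coincide on $C_{m_0}$.

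The main obstacle is avoiding a direct proof that $\tau_{\mathbf{uw}}$-convergence forces $\mathbf{d}_{1,-1}$-convergence. That direct route would require uniform integrability of first moments and uniformity in $t$. The compact-to-Hausdorff argument sidesteps this entirely. The only care needed is the diagonal time argument in the second step, which uses the uniform $\sqrt{|t-s|}$ modulus built into the definition of $C_{m_0}$.
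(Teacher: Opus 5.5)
Your proof is correct and complete. Since $\mathbf{d}_{1,-1}$-convergence implies weak convergence, you get the inclusion. Your diagonal-in-time argument, which uses the uniform $\sqrt{|t-s|}$ modulus, shows that the identity from $(C_{m_0},\rho_{-1,\infty})$ into the Hausdorff space $(C_{m_0},\tau_{\mathbf{uw}})$ is continuous. Compactness from Lemma \ref{lemma:CC} then makes this identity a homeomorphism, so you never need first countability of $\tau_{\mathbf{uw}}$.

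The paper gives no argument of its own and only says the lemma is proved exactly as \cite[Lemma 5.4]{federico_gozzi_swiech_2026}; presumably that means transplanting the argument from $H$ to $H_{-1}$. I can't compare step by step with that reference, but your write-up is a self-contained version of the standard argument such a citation stands for.
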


Now, we are in a position to state and prove the main result of this section.

\begin{theorem}\label{teo:existence}
Let Assumption \ref{ass:HJB} be satisfied and $m_0\in \mathcal{P}_2(H)$.  Then, the MFG system \eqref{HJB}--\eqref{FP} has a solution. 
\end{theorem}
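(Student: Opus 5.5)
The plan is to apply Tikhonov's fixed point theorem on the convex compact set $C_{m_0}$, viewed inside the locally convex space $(C([0,T];\mathcal{M}(H_{-1})),\tau_{\mathbf{uw}})$. Lemma \ref{lemma:CC} gives convexity and compactness in $(\tilde S,\rho_{-1,\infty})$. Lemma \ref{lem:topology} says this topology agrees with $\tau_{\mathbf{uw}}$ on $C_{m_0}$, so $C_{m_0}$ is a nonempty convex compact subset of a locally convex space. Nonemptiness holds because, for instance, the law of the solution of \eqref{SDE:1} with $w=0$ belongs to $C_{m_0}$ by Lemma \ref{lem:mildSDEcontinuity}. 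It therefore suffices to construct a continuous map $\Phi:C_{m_0}\to C_{m_0}$ and identify its fixed points with MFG solutions.

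\textbf{Definition of $\Phi$.} Given $m\in C_{m_0}\subset S$, let $v=v^m$ be the unique bounded viscosity solution of \eqref{HJB}, which exists by Remark \ref{remark_convergence_Dv}(ii). By (A6), $v(t,\cdot)\in C^{1,1}_b(H_{-1})$ with norm at most $\hat M_1$. In particular $\|Dv\|_\infty\leq \|B^{1/2}\|\hat M_1$; I will enlarge $\hat M_1$ if needed so that the sup in $\|\mathcal{H}_p\|_\infty$ covers these gradients. Set $w(t,x)=\mathcal{H}_p(x,Dv(t,x),m(t))$. We check Assumption \ref{Assumption:1} for $w$:
\begin{itemize}
\item Boundedness follows from (A3), with $\|w\|_\infty\leq\|\mathcal{H}_p\|_\infty$.
\item For the $H_{-1}$-Lipschitz bound, (A3) gives $\|w(t,x)-w(t,y)\|\leq C(\|x-y\|_{-1}+\|Dv(t,x)-Dv(t,y)\|)$. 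Since $Dv=BD_{H_{-1}}v=B^{1/2}(B^{1/2}D_{H_{-1}}v)$, the second term is at most $\|B^{1/2}\|\hat M_1\|x-y\|_{-1}$.
\item Continuity in $t$ uses the continuity of $m$ in $\mathbf{d}_{1,-1}$ together with the uniform continuity of $Dv$ on bounded sets (Remark \ref{rem:Lipt}).
\end{itemize}
Define $\Phi(m):=m^w$, the law of the mild solution of \eqref{SDE:1}. By Theorem \ref{th:uniqFP} it is the unique weak solution of \eqref{FP_linear}. Lemma \ref{lem:mildSDEcontinuity}(i),(iii) with $\|w\|_\infty\leq\|\mathcal{H}_p\|_\infty$ give $m^w(t)\in Q_{m_0}$, the H\"older bound with constant $M_1$, and $m^w(0)=m_0$. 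Hence $\Phi(C_{m_0})\subset C_{m_0}$.

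Uniqueness in Theorem \ref{th:FPuniqueness} additionally needs \eqref{eq:KLipa2}. If it is not implied by (A4), I would either add it or note that it belongs to the standing hypotheses. I would flag this as a point to check.

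\textbf{Continuity of $\Phi$, the main obstacle.} Let $m_k\to m$ in $\rho_{-1,\infty}$. Remark \ref{remark_convergence_Dv}(iii) gives $v_k\to v$ and $Dv_k\to Dv$ uniformly. Then (A3) yields $w_k\to w$ uniformly on $[0,T]\times H$, with uniform bounds and a uniform $H_{-1}$-Lipschitz constant. Since $C_{m_0}$ is compact, it suffices to show that every limit point $\bar m$ of $\Phi(m_k)$ equals $\Phi(m)$. Passing to a subsequence, $\Phi(m_k)\to\bar m$ in $\rho_{-1,\infty}$. We then pass to the limit in the weak formulation of Definition \ref{definition_weak_solution} for each fixed $\varphi\in\mathcal{D}_T$:
\begin{itemize}
\item The integrand $\partial_t\varphi+L_0\varphi-\langle w_k,D\varphi\rangle$ converges uniformly to the corresponding expression with $w$.
\item The term $\langle x,A^*D\varphi\rangle$ has linear growth. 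Its integrals converge because the measures have uniformly bounded second moments, hence uniformly integrable first moments.
\item For convergence of $\int g\,\Phi(m_k)(t)$ for continuous $g$ of linear growth on $H$ that are not $H_{-1}$-continuous, I would approximate $g$ by $g(t,P_Nx)$, which is $H_{-1}$-continuous. The error is controlled uniformly in $k$ via the uniform second moment bound together with the uniform continuity of the test-function terms on bounded sets.
\item The limit measure $\bar m(t)$ lies in $Q_{m_0}$, so it is supported on $H$ with second moments, and it belongs to $S_{FP}$.
\end{itemize}
Thus $\bar m$ is a weak solution of \eqref{FP_linear} with drift $w$, and Theorem \ref{th:FPuniqueness} gives $\bar m=\Phi(m)$. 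The delicate point is this passage to the limit: weak convergence holds only in $H_{-1}$, while the test-function terms are merely continuous in $H$. That is why the projection approximation and the uniform moment bounds are essential.

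\textbf{Conclusion.} Tikhonov's theorem gives a fixed point $m=\Phi(m)$. Then $(v^m,m)$ satisfies Definition \ref{definition_MFG_solution}: $v^m(t,\cdot)\in C^{1,1}_b(H_{-1})$ by (A6), the drift $w=\mathcal{H}_p(x,Dv^m,m)$ satisfies Assumption \ref{Assumption:1} as verified above, and $m$ is a weak solution of the Fokker--Planck equation with this drift.
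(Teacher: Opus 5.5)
Your overall route is the paper's. You use Tikhonov on $C_{m_0}$, the map $m\mapsto\mathcal{L}(X^{w})$ with $w=\mathcal{H}_p(x,Dv^{(m)}(t,x),m(t))$, and you identify subsequential limits of $\Phi(m_k)$ via the weak formulation and Theorem~\ref{th:uniqFP}. Your two side remarks are fair: the factor $\|B^{1/2}\|_{L(H)}$ in the definition of $\|\mathcal{H}_p\|_\infty$, and the fact that Theorem~\ref{th:FPuniqueness} needs \eqref{eq:KLipa2}, which is not part of Assumption~\ref{ass:HJB}.

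The gap is exactly at the step you call delicate, which the paper also takes in a single line. The justification you give does not work. For $g$ uniformly continuous on bounded sets of $H$, $|g(t,x)-g(t,P_Nx)|$ is controlled only by a modulus of $\|Q_Nx\|$. But $\|Q_Nx\|$ is not small uniformly on bounded subsets of $H$; only $\|Q_Nx\|_{-1}$ is. A uniform second-moment bound does not stop mass from escaping into high modes. For instance, $\delta_{e_k}\to\delta_0$ in $\mathbf{d}_{1,-1}$ with $\int\|x\|^2\,\delta_{e_k}(\mathrm{d}x)=1$, yet for $g(x)=\psi(\|x\|^2)$ the approximation error $g(e_k)-g(P_Ne_k)=\psi(1)-\psi(0)$ persists for every $k>N$. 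What your argument actually requires is
\[
\sup_k\sup_t\int_H\|Q_Nx\|\,\Phi(m_k)(t,\mathrm{d}x)\to0 \quad\text{as } N\to\infty,
\]
that is, uniform tightness in $H$. This does hold here, but it is an additional estimate, not a consequence of the moment bound. It follows from the mild formulation, using that $\bigcup_k w_k([0,T]\times\overline B_R(0))$ and $\sigma(\overline B_R(0))$ are relatively compact and that $\mathrm{e}^{tA}$ is strongly continuous.

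The simplest repair bypasses the weak formulation altogether. Construct $X^{w_k}$ and $X^{w}$ on one probability space with the same $X_0$ and $W$. You have $\|\mathrm{e}^{tA}\|_{L(H)}\le1$, $w_k\to w$ uniformly, and $w_k,w,\sigma$ Lipschitz in $H$ (since $\|\cdot\|_{-1}\le\|B^{1/2}\|_{L(H)}\|\cdot\|$). The maximal inequality for stochastic convolutions and Gronwall then give
\[
\mathbb{E}\sup_{t\in[0,T]}\|X^{w_k}(t)-X^{w}(t)\|^2\le C\|w_k-w\|_\infty^2 .
\]
Hence $\rho_{1,\infty}(\Phi(m_k),\Phi(m))\to0$, and a fortiori $\rho_{-1,\infty}(\Phi(m_k),\Phi(m))\to0$. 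This proves continuity of $\Phi$ directly. Since a fixed point is a weak solution by It\^o's formula alone, uniqueness for \eqref{FP_linear}, and with it \eqref{eq:KLipa2}, is not even needed for existence.

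If you prefer to keep the weak-formulation argument, test only with cylindrical functions $\varphi(t,x)=\tilde\varphi(t,P_Nx)$. These are the only test functions used in the proof of Theorem~\ref{th:FPuniqueness}. For them every term of the weak formulation is weakly sequentially continuous, hence $\|\cdot\|_{-1}$-continuous on bounded subsets of $H$. This includes the unbounded term
\[
\langle x,A^*D\varphi\rangle=\sum_{j\le N}\partial_j\tilde\varphi(t,P_Nx)\,\langle x,A^*e_j\rangle,
\]
where $e_j=\lambda_j^{-1/2}B^{1/2}e_j\in\mathcal{D}(A^*)$. For such integrands, $\mathbf{d}_{1,-1}$-convergence together with the uniform second-moment bound does suffice.
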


\begin{proof}
We apply Tikhonov's fixed theorem to the following map $\Psi$. Given $m\in C_{m_0}$, we first solve the HJB equation \eqref{HJB} to obtain a corresponding viscosity solution $v^{(m)}$ which, by Assumption \ref{assumption2}(A6), extends to a function $v^{(m)}$ such that $v^{(m)}(t,\cdot) \in C^{1,1}_b(H_{-1})$. Now, given $m(t)$ and $v^{(m)}$, we solve equation \eqref{FP_linear} with $w(t,x) = \mathcal{H}_p(x,Dv^{(m)}(t,x),m(t))$. We denote its solution by $\Psi(m)$. Theorem \ref{th:uniqFP} and Lemma \ref{lem:mildSDEcontinuity} guarantee that the map $\Psi:C_{m_0} \to C_{m_0}$, $m\mapsto \Psi(m)$ is well-defined.

We need to show that $\Psi:C_{m_0} \to C_{m_0}$ has a fixed point.
We consider $C_{m_0}$ embedded in the topological vector space $(C([0,T];\mathcal{M}(H_{-1})),\tau_{\mathbf{uw}})$. By 
Lemma \ref{lem:topology} we can indifferently use on $C_{m_0}$ the $\tau_{\mathbf{uw}}$ or the $\rho_{-1,\infty}$ topology. Since $C_{m_0}$ is compact and convex, to prove the existence of a fixed point of $\Psi$ in 
$C_{m_0}$ we need to show that $\Psi|_{C_{m_0}}$ is continuous.

Let $(m_{n})\subset C_{m_0}$ be such that $m_{n}\to m\in C_{m_0}$.  Since $C_{m_0}$ is compact, from each subsequence $(m_{n_{k}})$, one can extract a sub-subsequence such that $\Psi(m_{n_{k_h}})\to \hat{m}$ for some $\hat{m}\in C_{m_0}$.
 Set
 \[
 w^{(m)}:=\mathcal{H}_{p}(\cdot,Dv^{(m)}(\cdot,\cdot), m(\cdot)), \ \ \ \ w^{(m_{n_{k_{h}}})}:=\mathcal{H}_{p}(\cdot,Dv^{(m_{n_{k_{h}}})}(\cdot,\cdot), m_{n_{k_{h}}}(\cdot)).
 \]
For each $h\in\mathbb{N}$,
\begin{align}
&
\int_{H} \varphi (t,x) \Psi(m_{n_{k_{h}}})(t,\mathrm{d}x) - \int_{H} \varphi (0,x) m_0(\mathrm{d}x)\\&
= \int_0^{t} \left(\int_{H}\left[\partial_{t}\varphi(s,x)+L_{0}\varphi(s,x)-\langle w^{(m_{n_{k_{h}}})}(s,x), D\varphi(s,x)\rangle \right]\Psi(m_{n_{k_{h}}})(s,\mathrm{d}x)\right) \mathrm{d}s
\end{align}
for every $t\in(0,T]$ and $\varphi\in \mathcal{D}_{T}$. Since $m_{n_{k_h}}\to m$ in $C_{m_0}$, we have by Remark \ref{remark_convergence_Dv}(iii), that $Dv^{(m_{n_{k_{h}}})}\to Dv^{(m)}$ uniformly on $[0,T]\times H$ and thus $w^{(m_{n_{k_{h}}})}\to w^{(m)}$ uniformly on $[0,T]\times H$ as $h\to\infty$. Thus if we let $h\to\infty$ in the equation above, we obtain
\begin{align}
&
\int_{H} \varphi (t,x) \hat m(t,\mathrm{d}x) - \int_{H} \varphi (0,x) m_{0}(\mathrm{d}x)\\&
= \int_0^{t} \left(\int_{H}\left[\partial_{t}\varphi(s,x)+L_{0}\varphi(s,x)-\langle w^{(m)}(s,x), D\varphi(s,x)\rangle \right]\hat m(s,\mathrm{d}x)\right)\mathrm{d}s.
\end{align}
On the other hand, we also have
\begin{align}
&
\int_{H} \varphi (t,x) \mathcal{L}(X^{(m)}(\cdot))(t,\mathrm{d}x) - \int_{H} \varphi (0,x) m_{0}(\mathrm{d}x)\\&
= \int_0^{t} \left(\int_{H}\left[\partial_{t}\varphi(s,x)+L_{0}\varphi(s,x)-\langle w^{(m)}(s,x), D\varphi(s,x)\rangle \right]\mathcal{L}(X^{(m)}(\cdot))(s,\mathrm{d}x)\right)\mathrm{d}s.
\end{align}
Hence, by Theorem \ref{th:uniqFP}, we must have $\hat m=\mathcal{L}(X^{(m)}(\cdot))$.
By the arbitrariness of the choice of the subsequence $n_{k}$, it then follows that  
\[
\Psi(m_{n})\to \hat m =\mathcal{L}(X^{(m)}(\cdot))=\Psi(m) \ \ \ \text{in} \ (\tilde{S},\rho_{-1,\infty}),
\]
which implies the continuity of $\Psi$.
\end{proof}

\section{The MFG System: Uniqueness of Solutions}\label{section_MFG_Uniqueness}
To prove uniqueness of solutions to the MFG system \eqref{HJB}--\eqref{FP} we follow the approach used in \cite{federico_gozzi_swiech_2026}, which in turn was an adaptation to the Hilbert space case of a standard proof employed in the finite dimensional case, see e.g. \cite[Theorem 1.4]{cardaliaguet_porretta_2020}. However, instead of approximating mild solutions of Kolmogorov equations by smooth solutions of approximate equations as it was done in \cite{federico_gozzi_swiech_2026}, we directly approximate viscosity solutions of the HJB equations. We use a variant of the Lasry--Lions monotonicity condition from \cite[Theorem 1.4]{cardaliaguet_porretta_2020} (see also \cite{lasry_lions_2006,lasry_lions_2006_2,lasry_lions_2007}).
Regarding other monotonicity conditions that guarantee uniqueness of solutions for finite-dimensional MFG systems we refer to \cite{graber_meszaros_2023,meszaros_mou_2024} and the references therein. We also assume that the Hamiltonian $\mathcal{H}$ has a separated structure as described below. Well-posedness of MFG systems which do not have separated form has been studied in \cite{meszaros_mou_2024}. 
\begin{assumption}
\label{hp:uniqueness}
\begin{itemize}\smallskip
\item[]
  \item[(i)]
The Hamiltonian $\mathcal{H}$ has a separated form, i.e.
\[
\mathcal{H}(x,p,\mu)=\mathcal{H}^0(x,p)-F(x,\mu),
\]
for some continuous  functions
\[
\mathcal{H}^0:H \times H \to \R,
\qquad F: H\times \mathcal{P}_1(H)\to \R.
\]
  \item[(ii)]
The map $\mathcal{H}^0$ is convex in $p$.
\medskip
  \item[(iii)]
The functions $F$ and $G$ are monotone in the $\mu$ variable in the following sense:
\[
\int_H [F(x,\mu_1)-F(x,\mu_2)](\mu_1-\mu_2)(\mathrm{d}x)
\ge 0
\quad
\forall \mu_1,\mu_2\in \mathcal{P}_1(H),
\]
\[
\int_H [G(x,\mu_1)-G(x,\mu_2)](\mu_1-\mu_2)(\mathrm{d}x) \ge 0
\quad
\forall \mu_1,\mu_2\in \mathcal{P}_1(H).
\]
\smallskip
\item[(iv)]
One of the following conditions is satisfied: 
\begin{itemize}
  \item[(a)] For all $\mu_1,\mu_2\in \mathcal{P}_1(H)$ such that $\mu_1\ne \mu_2$ it holds
\[
\int_H [F(x,\mu_1)-F(x,\mu_2)](\mu_1-\mu_2)(\mathrm{d}x)
> 0.
\]
  \item[(b)] For all $\mu_1,\mu_2\in \mathcal{P}_1(H)$ it holds
\begin{equation}
    \begin{cases}
        \int_H [F(x,\mu_1)-F(x,\mu_2)](\mu_1-\mu_2)(\mathrm{d}x)
= 0
\quad \Longrightarrow\quad F(\cdot,\mu_1)=F(\cdot,\mu_2)\\
\int_H [G(x,\mu_1)-G(x,\mu_2)](\mu_1-\mu_2)(\mathrm{d} x) = 0
\quad \Longrightarrow\quad G(\cdot,\mu_1)=G(\cdot,\mu_2).
    \end{cases}
\end{equation}
  \item[(c)] For all $x\in H$, $p_1,p_2\in H$ it holds
\[
\mathcal{H}^0(x,p_1)-\mathcal{H}^0(x,p_2)
-\langle \mathcal{H}^0_p(x,p_2), p_{1}-p_{2}\rangle =0 \quad \Longrightarrow\quad\mathcal{H}^0_p(x,p_1)=\mathcal{H}^0_p(x,p_2).
\]
\end{itemize}
\end{itemize}
\end{assumption}

\begin{theorem}
\label{th:uniqueness}
Let Assumptions \ref{ass:HJB} and \ref{hp:uniqueness} be satisfied and let $m_0\in \mathcal{P}_2(H)$.
Then, there exists a unique solution of
the MFG system \eqref{HJB}--\eqref{FP}.
\end{theorem}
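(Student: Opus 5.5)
Existence is Theorem \ref{teo:existence}, so it remains to prove uniqueness. We adapt the Lasry--Lions argument. Let $(v^1,m^1)$ and $(v^2,m^2)$ be two solutions. Since $m^i$ is the weak solution of \eqref{FP_linear} with $w_i(t,x)=\mathcal{H}^0_p(x,Dv^i(t,x))$, Theorem \ref{th:uniqFP} gives that $m^i$ is the law of the corresponding mild SDE solution. Hence $m^i\in S\cap\hat S$ with uniformly bounded second moments, and these are the integrability properties we will need.

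Applying Lemma \ref{lem:HJB.approx} to $v^i$ (with $m=m^i$) yields $\hat v^i_n\in\mathcal D_T$ with $\hat v^i_n\to v^i$ and $D\hat v^i_n\to Dv^i$ uniformly on bounded sets. These satisfy
\[
\partial_t\hat v^i_n+L_0\hat v^i_n-\mathcal{H}^0(x,D\hat v^i_n)+F(x,m^i(t))=f^i_n,
\]
with $f^i_n\to0$ pointwise and $|f^i_n|\le C(1+\|x\|)$. We test the weak formulation for $m^j$ ($j=1,2$) with $\varphi=\hat v^1_n-\hat v^2_n$ on $[0,T]$ and subtract the two identities. In the resulting expression the $\partial_t+L_0$ terms are replaced using the approximate HJB equations, so that
\begin{align}
&\int_H(\hat v^1_n-\hat v^2_n)(T)\,\mathrm{d}(m^1-m^2)(T)\\
&=\sum_{j}\pm\int_0^T\!\!\int_H\Big[\mathcal{H}^0(x,D\hat v^1_n)-\mathcal{H}^0(x,D\hat v^2_n)-F(x,m^1)+F(x,m^2)+f^1_n-f^2_n-\langle w_j,D\hat v^1_n-D\hat v^2_n\rangle\Big]\mathrm{d}m^j\,\mathrm{d}t.
\end{align}
The initial terms cancel because $m^1(0)=m^2(0)=m_0$.

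Next we let $n\to\infty$. The $f^i_n$ terms vanish by dominated convergence, since $m^j\in S_{FP}$ has a finite first moment. The gradient terms converge by uniform convergence on bounded sets, together with uniform bounds on $D\hat v^i_n$ and tightness. This tail control on the gradients is the most delicate point. For it we rely on the $C^{1,1}_b(H_{-1})$ bounds of (A6), which pass through the mollification, and on the fact that $\mathcal{H}^0$ is Lipschitz in $p$. The terminal term also converges, because $\hat v^i_n(T)\to G(\cdot,m^i(T))$ locally uniformly with a uniform bound, by the same tightness argument. We then rearrange into the standard identity
\begin{align}
&\int_H[G(x,m^1(T))-G(x,m^2(T))]\,\mathrm{d}(m^1-m^2)(T)+\int_0^T\!\!\int_H[F(x,m^1)-F(x,m^2)]\,\mathrm{d}(m^1-m^2)\,\mathrm{d}t\\
&\quad+\int_0^T\!\!\int_H E(x,Dv^2,Dv^1)\,\mathrm{d}m^1\,\mathrm{d}t+\int_0^T\!\!\int_H E(x,Dv^1,Dv^2)\,\mathrm{d}m^2\,\mathrm{d}t=0,
\end{align}
where $E(x,p,q)=\mathcal{H}^0(x,p)-\mathcal{H}^0(x,q)-\langle\mathcal{H}^0_p(x,q),p-q\rangle\ge0$ by convexity. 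The sign conventions have to be tracked carefully, but this is just the finite-dimensional computation from \cite{cardaliaguet_porretta_2020}. Every term in the identity is nonnegative by Assumption \ref{hp:uniqueness}(ii),(iii), so each term vanishes.

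We conclude according to which alternative of Assumption \ref{hp:uniqueness}(iv) holds.
\begin{itemize}
\item[(a)] The $F$-term vanishing gives $m^1(t)=m^2(t)$ for a.e.\ $t$, and hence for all $t$ by continuity. Then $v^1=v^2$ by uniqueness of viscosity solutions of \eqref{HJB} (Remark \ref{remark_convergence_Dv}(ii)).
\item[(b)] We get $F(\cdot,m^1(t))=F(\cdot,m^2(t))$ for a.e.\ $t$ and $G(\cdot,m^1(T))=G(\cdot,m^2(T))$. So $v^1$ and $v^2$ solve the same HJB equation, which gives $v^1=v^2$. Then $w_1=w_2$, and Theorem \ref{th:FPuniqueness} gives $m^1=m^2$.
\item[(c)] The vanishing of $E$ $m^1$-a.e.\ gives $\mathcal{H}^0_p(x,Dv^1)=\mathcal{H}^0_p(x,Dv^2)$ $m^1\,\mathrm{d}t$-a.e. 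Consequently $m^1$ is also a weak solution of \eqref{FP_linear} with drift $w_2$, since its weak formulation only sees $w$ on the support of $m^1\,\mathrm{d}t$. Theorem \ref{th:FPuniqueness} then gives $m^1=m^2$, and uniqueness for HJB gives $v^1=v^2$.
\end{itemize}
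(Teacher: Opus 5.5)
Your proposal is correct and follows essentially the same route as the paper. You approximate $v^1,v^2$ via Lemma \ref{lem:HJB.approx}, test the weak formulations of $m^1,m^2$ with $\hat v^1_n-\hat v^2_n$, pass to the limit by dominated convergence to get the four-term Lasry--Lions identity (your signs and $E$-terms match the paper's), and conclude case by case; your additional remarks (that $m^i\in S$ follows from Theorem \ref{th:uniqFP}, the a.e.-in-$t$ plus continuity step, and why $m^1$ also solves the Fokker--Planck equation with drift $w_2$ in case (c)) only make explicit details the paper leaves implicit.
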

\begin{proof}
We only need to show uniqueness of solutions as existence of a solution of \eqref{HJB}--\eqref{FP} was proved in Theorem \ref{teo:existence}.
Let  $(v_1,m_1)$ and $(v_2,m_2)$ be two solutions
of the MFG system \eqref{HJB}--\eqref{FP}. We set
$\bar{v}=v_1-v_2$ and $\bar{m}=m_1-m_2$.

Let $\hat{v}_n^1,\hat{v}_n^2\in \mathcal{D}_T$ be the functions from Lemma \ref{lem:HJB.approx} such that $\hat{v}_n^i\to v_i, D\hat{v}_n^i \to Dv_i, i=1,2$, uniformly on bounded subsets of $[0,T]\times H$ and such that 
\begin{equation}\label{equation_1HJBi}
    \partial_t \hat{v}_n^i(t,x) + L_0 \hat{v}_n^i(t,x) - \mathcal{H}(x,D\hat{v}_n^i(t,x),m_i(t)) = f_n^i(t,x) \quad\text{in}\,\,(0,T)\times H,
\end{equation}
where $f_n^i(t,x)\to 0$ for all $(t,x)\in (0,T)\times H$ and $|f_n^i(t,x)|\leq C(1+\|x\|)$ for some $C\geq 0$ independent of $n$. We denote $\bar v_n=\hat{v}_n^1-\hat{v}_n^2$ and
\[
g_n(t,x):=\mathcal{H}(x,D\hat{v}_n^1(t,x),m_1(t))-\mathcal{H}(x,D\hat{v}_n^2(t,x),m_2(t))+f_n^1(t,x)-f_n^2(t,x).
\]
The functions $\bar v_n$ satisfy on $(0,T)\times H$
\begin{equation}\label{eq:barvn1}
{\partial_t}{\bar v_n}(t,x)+ {L_0} {\bar v_n}(t,x)=g_n(t,x)
\end{equation}
while, since $\bar v_n\in \mathcal{D}_T$, we have
\begin{align}\label{aaabarn}
&
\int_{H} \bar{v}_n (T,x) \bar{m}(T,\mathrm{d}x)
=
\int_0^{T} \left(\int_{H}\left[\partial_{t}\bar{v}_n(t,x)+L_{0}\bar{v}_n(t,x) \right]\bar{m}(t,\mathrm{d}x)\right)\mathrm{d}t
\\&\nonumber
- \int_0^{T} \left(\int_{H} \langle\mathcal{H}^0_p(x,Dv_1(s,x)),\, D\bar{v}_n(t,x)\rangle m_1(t,\mathrm{d}x)
-\int_{H} \langle\mathcal{H}^0_p(x,Dv_2(t,x)),\, D\bar{v}_n(t,x)\rangle m_2(t,\mathrm{d}x)\right)\mathrm{d}t.
\end{align}
Integrating \eqref{eq:barvn1} over $[0,T]\times H$ with respect to the measure $\bar{m}(t)\mathrm{d}t$ and combining it with \eqref{aaabarn}, we obtain
\begin{align}\label{eq:aaabis}
&
\int_{H} \bar{v}_n (T,x) \bar{m}(T,\mathrm{d}x) -\int_{0}^{T}\left(\int_{H}g_n(t,x)
\bar{m}(t,\mathrm{d}x)\right)\mathrm{d}t
=
\\&\nonumber
- \int_0^{T} \left(\int_{H} \langle\mathcal{H}^0_p(x,Dv_1(t,x)), D\bar{v}_n(t,x)\rangle m_1(t,\mathrm{d}x)
- \int_{H} \langle\mathcal{H}^0_p(x,Dv_2(t,x)), D\bar{v}_n(t,x)\rangle m_2(t,\mathrm{d}x)\right)\mathrm{d}t.
\end{align}
Therefore, passing to the limit in \eqref{eq:aaabis} and using the Dominated Convergence Theorem, we arrive at
\begin{align}\label{eq:aaabis2}
&0=-
\int_{H} \bar{v} (T,x) \bar{m}(T,\mathrm{d}x) +\int_{0}^{T}\left(\int_{H}g(t,x)
\bar{m}(t,\mathrm{d}x)\right)\mathrm{d}t
\\&\nonumber
- \int_0^{T} \left(\int_{H} \langle\mathcal{H}^0_p(x,Dv_1(t,x)), D\bar{v}(t,x)\rangle m_1(t,\mathrm{d}x)
- \int_{H} \langle\mathcal{H}^0_p(x,Dv_2(t,x)), D\bar{v}(t,x)\rangle  m_2(t,\mathrm{d}x)\right)\mathrm{d}t,
\end{align}
where
\[
g(t,x):=\mathcal{H}(x,Dv_1(t,x),m_1(t))-\mathcal{H}(x,Dv_2(t,x),m_2(t)).
\]
Using the separated structure of $\mathcal{H}$ from Assumption \ref{hp:uniqueness}(i), we thus have
\begin{equation}
\begin{split}
0&=\int_{H} \left [ G(x,m_1(T)) - G(x,m_2(T)) \right ]  \bar{m}(T,\mathrm{d}x)+
\int_0^{T} \int_{H}[F(x,m_1(t))-F(x,m_2(t))]
\bar{m}(t,\mathrm{d}x)\mathrm{d}t\\
&\quad + \int_0^{T} \int_{H}\left[ \mathcal{H}^{0}(x,Dv_2(t,x))-\mathcal{H}^{0}(x,Dv_1(t,x))
-\langle\mathcal{H}^0_p(x,Dv_1(t,x)),- D\bar v(t,x)\rangle \right]m_1(t,\mathrm{d}x)\mathrm{d}t\\
&\quad + \int_0^{T} \int_{H}\left[ \mathcal{H}^{0}(x,Dv_1(t,x))-\mathcal{H}^{0}(x,Dv_2(t,x))
-\langle\mathcal{H}^0_p(x,Dv_2(t,x)), \, D\bar{v}(t,x)\rangle \right]m_2(t,\mathrm{d}x)\mathrm{d}t.
\end{split}
\end{equation}
We finish the proof by following the argument of \cite[Theorem 6.2]{federico_gozzi_swiech_2026}, the only difference being that we work with viscosity solutions instead of mild solutions. For completeness, we include this argument.

By Assumption \ref{hp:uniqueness}(ii)--(iii) all terms above are nonnegative, so they all must be equal to $0$.
We now conclude as follows:
\begin{itemize}
\item If case (a) of Assumption \ref{hp:uniqueness}(iv) holds, we have $m_{1}=m_{2}$; then uniqueness of viscosity solutions of equation \eqref{HJB} gives $v_1=v_2$.
\item If case (b) of Assumption \ref{hp:uniqueness}(iv) holds, we first use uniqueness of viscosity solutions of equation \eqref{HJB}, which holds since 
\[
F(x,m_1(t))=F(x,m_2(t)),  \quad G(x,m_1(T))=G(x,m_2(T)),
\]
to obtain $v_1=v_2$; then, we conclude that $m_{1}=m_{2}$ by uniqueness of weak solutions of \eqref{FP_linear} with $w(t,x):=\mathcal{H}^0_p(x,Dv_1(t,x))=\mathcal{H}^0_p(x,Dv_2(t,x))$.
\item If case (c) of Assumption \ref{hp:uniqueness}(iv) holds, we first obtain 
\[
\mathcal{H}^0_p(x,Dv_1(t,x))=\mathcal{H}^0_p(x,Dv_2(t,x)),
\]
$m_1(t)$- and $m_2(t)$-a.e. for $t\in[0,T]$. This implies $m_{1}=m_{2}$ by uniqueness of weak solutions of \eqref{FP_linear} with $w(t,x)=\mathcal{H}^0_p(x,Dv_1(t,x))$ (or equivalently with $w(t,x)=\mathcal{H}^0_p(x,Dv_2(t,x))$) and then conclude that $v_{1}=v_{2}$ using uniqueness of viscosity solutions of equation \eqref{HJB}. \qedhere
\end{itemize}
\end{proof}

\section{Examples}\label{sec:examples}

In this section, we will discuss some examples in which Assumption \ref{ass:HJB} is satisfied. In these examples, the MFG system \eqref{HJB}--\eqref{FP} arises in the search for Nash equilibria of mean field games. We will briefly sketch this connection. For a more detailed exposition of how the MFG system arises in this context, see \cite[Section 1.1.3]{cardaliaguet_delarue_lasry_lions_2019} or \cite[Section 3.1.5]{carmona_delarue_2018}.

For a given flow of measures $(m(s))_{s\in [t,T]}$, let us consider a standard stochastic control problem for which the value function is given by
\begin{equation}
    v(t,x) := \inf_{\alpha(\cdot) \in \mathcal{A}_t} \mathbb{E} \left [ \int_t^T f(X(s), \alpha(s), m(s)) \mathrm{d}s + G(X(T),m(T)) \right ]
\end{equation}
subject to
\begin{equation}\label{state_equation}
    \mathrm{d}X(s) = [ AX(s) + b(X(s),\alpha(s),m(s)) ] \mathrm{d}s + \sigma(X(s)) \mathrm{d}W(s),\quad X(t) = x \in H.
\end{equation}
Here $f:H\times \Lambda \times \mathcal{P}_1(H) \to \mathbb{R}$ and $G: H \times \mathcal{P}_1(H) \to \mathbb{R}$ are the running and terminal cost, respectively, $b:H\times\Lambda\times \mathcal{P}_1(H)\to H$ and $\sigma: H \to L_2(\Xi,H)$ are the drift and noise coefficient, respectively, $(W(s))_{s\in [0,T]}$ is a cylindrical Wiener process in $\Xi$, and $\mathcal{A}_t$ is the set of control processes with values in a subset $\Lambda_0$ of a real, separable Hilbert space $\Lambda$.

It is well known that, under appropriate assumptions, $v$ is the unique viscosity solution of the HJB equation
\begin{equation}
    \partial_t v(t,x) +Lv(t,x) - \mathcal{H}(x,Dv(t,x),m(t)) = 0,\quad v(T,\cdot) = G(\cdot,m(T))
\end{equation}
where the Hamiltonian is given by
\begin{equation}\label{ex:control}
    \mathcal{H}(x,p,\mu) = \sup_{\alpha\in\Lambda_0} \{ - \langle b(x,\alpha,\mu) , p \rangle - f(x,\alpha,\mu) \}.
\end{equation}
Moreover, it follows that the optimal control satisfies
\begin{equation}
    \alpha^*(t,x) = \argmax_{\alpha\in \Lambda_0} \{ - \langle b(x,\alpha,m(t)) , Dv(t,x) \rangle - f(x,\alpha,m(t)) \}
\end{equation}
and therefore, by differentiating \eqref{ex:control}, we conclude that the optimal drift in equation \eqref{state_equation} takes the form
\begin{equation}
    b(x,\alpha^*(s,x),m(s)) = - \mathcal{H}_p(x,Dv(s,x),m(s)).
\end{equation}
Writing down the associated Fokker--Planck equation leads to
\begin{equation}
    \partial_t m(t) - L^* m(t) - \operatorname{div} \left ( \mathcal{H}_p(x,Dv(t,x),m(t)) m(t) \right ) = 0, \quad m(0) = m_0.
\end{equation}
Thus, finding a solution of the MFG system \eqref{HJB}--\eqref{FP} corresponds to finding a Nash equilibrium of the mean field game.

\subsection{Examples for \texorpdfstring{$C^{1,1}$}{C^{1,1}} Regularity}\label{section_C11_regularity}

We recall a $C^{1,1}$ regularity result from \cite{defeo_swiech_wessels_2025} for viscosity solutions of HJB equations \eqref{HJB} where the Hamiltonian $\mathcal{H}$ is given by \eqref{ex:control}. Here $\Lambda_0$ is a bounded, convex subset of a real, separable Hilbert space $\Lambda$, $f:H\times \Lambda \times \mathcal{P}_1(H) \to \mathbb{R}, b:H\times\Lambda\times \mathcal{P}_1(H)\to H$. 
    
\begin{assumption}\label{bsigmalipschitzw}
	\begin{enumerate}[label=(\roman*)]
		\item %The function $b$ is bounded and 
		There exists a constant $C>0$ such that
		\[
			\| b(x,\alpha,\mu) - b(x^{\prime},\alpha^{\prime},\mu' ) \| \leq C (\|x-x^{\prime}\|_{-1} + \|\alpha-\alpha^{\prime}\|_{\Lambda} +\mathbf{d}_{1,-1}(\mu,\mu'))
		\]
		%\[
		%\|b(x,a,\mu)\|\leq C(1+\|a\|_\Lambda)
		%\]
		for all $x,x^{\prime}\in H, \mu,\mu'\in \mathcal{P}_1(H)$ and $\alpha,\alpha^{\prime}\in \Lambda$.
				% \item The function $\sigma$ is bounded and there exists a constant $C>0$ such that
 %   \[
  %      \|\sigma(x)-\sigma(x^{\prime}) \|_{L_2(\Xi,H)} \leq C \|x-x^{\prime} \|_{-1}
  %  \]
  %  for all $x,x^{\prime}\in H$.
     \item The function $f$ is continuous, it is bounded on $H\times\Lambda_0\times\mathcal{P}_1(H)$ and there exists a constant $C>0$ such that
    \[
      |f(x,\alpha,\mu) - f(x^{\prime},\alpha,\mu')| \leq C (\|x-x^{\prime}\|_{-1}+\mathbf{d}_{1,-1}(\mu,\mu'))
    \]
    for all $x,x^{\prime}\in H,\mu,\mu'\in \mathcal{P}_1(H)$ and $\alpha\in \Lambda$.
   %  \item There exists a constant $C>0$ such that
   % \[
     % |f(x,a,\mu)| \leq C (1+\|x\|_H+\|a\|_{\Lambda}^2)
   % \]
    %  for all $x\in H$ and $a\in \Lambda$ and
   % \[
 % \lim_{\|a\|\to\infty}  \frac{f(x,a,\mu)}{\|a\|_\Lambda}=+\infty\quad\mbox{uniformly in}\,\,x,\mu.
 % \]
Moreover, for every $(\alpha,\mu)\in \Lambda \times\mathcal{P}_1(H)$, the function $f(\cdot,\alpha,\mu)$ extends to a semiconcave function on $H_{-1}$ with the semiconcavity constant independent of $\alpha,\mu$, and there exist constants $C,\nu \geq 0$ such that the map
		\[
			H\times\Lambda_0 \ni (x,\alpha )\mapsto f(x,\alpha,\mu) + C\|x\|_{-1}^2 - \nu \|\alpha\|^2_{\Lambda}
		\]
		is convex.

   % \item There exists a constant $C>0$ such that
   % \[
   %     |g(x,\mu) - g(x^{\prime},\mu')| \leq C( \|x-x^{\prime}\|_{-1}+\mathbf{d}_{1,-1}(\mu,\mu'))
   % \]
   % for all $x,x^{\prime}\in H, \mu,\mu'\in \mathcal{P}_1(H)$. Moreover, for every $\mu\in \mathcal{P}_1(H)$, $g(\cdot,\mu)$ extends to a function %in $C^{1,1}(H_{-1})$ and $\|g(\cdot,\mu)\|_{C^{1,1}(H_{-1})}\leq C$ for some $C\geq 0$ independent of $\mu$.
  %  \item If $\Lambda_0$ is unbounded we assume that $f$ and $g$ are bounded from below.
\end{enumerate}
\end{assumption}

It is easy to see that Assumption \ref{bsigmalipschitzw} implies Assumption \ref{ass:HJB}(A2).

\begin{assumption}\label{bsigmafrechetw}
	\begin{enumerate}[label=(\roman*)]
		\item The function $b$ is Fr\'echet differentiable in $(x,\alpha)$ and there exists a constant $C>0$ such that
        \[
        \begin{split}
            &\| (D_x b(x,\alpha,\mu) - D_x b(x^{\prime},\alpha^{\prime},\mu))(x-x^{\prime}) + ( D_{\alpha} b(x,\alpha,\mu) - D_{\alpha} b(x^{\prime},\alpha^{\prime},\mu))(\alpha-\alpha^{\prime})\|_{-1}\\
            &\leq C \left ( \|x-x^{\prime}\|^2_{-1} + \| \alpha-\alpha^{\prime} \|_{\Lambda}^2 \right )
        \end{split}
        \]
        for all $x,x^{\prime}\in H,\mu\in \mathcal{P}_1(H)$, $\alpha,\alpha^{\prime}\in \Lambda$.
		\item The function $\sigma: H \to L_2(\Xi,H)$ is Fr\'echet differentiable and there exists a constant $C>0$ such that
		\[
			\| (D\sigma(x) - D\sigma(x^{\prime}))(x-x^{\prime})\|_{L_2(\Xi,H_{-1})} \leq C \|x-x^{\prime}\|_{-1}^2
		\]
		for all $x,x^{\prime}\in H$.
	\end{enumerate}
\end{assumption}

The $C^{1,1}$ regularity results of \cite{defeo_swiech_wessels_2025} were proved there for HJB equations with coefficient functions which were independent of $t$. Here the time variable is present through the variable $m(t)$. However it is easy to see that the presence of the time variable does not affect the proofs in \cite{defeo_swiech_wessels_2025} if all assumptions on the regularity of the coefficients are independent of $t$. Thus, combining Theorems 5.10 and 5.12 of \cite{defeo_swiech_wessels_2025} we have the following result. Note that the boundedness of the value function $v$, which is not claimed in \cite{defeo_swiech_wessels_2025}, follows from its stochastic representation and the assumption that the running and terminal costs are bounded, see Assumptions \ref{ass:HJB}(A5) and \ref{bsigmalipschitzw}(ii).

\begin{theorem}\label{th:C11reg1}
    Let Assumptions \ref{ass:HJB}(A1),(A4),(A5), \ref{bsigmalipschitzw}, \ref{bsigmafrechetw} be satisfied. Then, there exists a constant $\nu_0$, depending only on the data of the problem, such that if $\nu \geq \nu_0$, then for every $m\in S$ the viscosity solution $v$ of \eqref{HJB} satisfies
    \begin{equation}\label{eq:ex1regularity}
        \sup_{t\in[0,T]}\|v(t,\cdot)\|_{C^{1,1}_b(H_{-1})}\leq L
    \end{equation}
    for some $L\geq 0$ depending only on various constants in the assumptions and independent of $m$.
\end{theorem}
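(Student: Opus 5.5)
The paper itself derives this result by combining Theorems 5.10 and 5.12 of \cite{defeo_swiech_wessels_2025}, so the plan below fills in the mechanism behind that citation rather than reproducing the paper's own argument. The proof has three parts: get a uniform Lipschitz bound in $H_{-1}$, prove uniform semiconcavity and semiconvexity of $v(t,\cdot)$ in $H_{-1}$, and then conclude with the Lasry--Lions principle from \cite{lasry_lions_1986}. That principle says a bounded, uniformly continuous function on $H_{-1}$ that is both semiconvex and semiconcave lies in $C^{1,1}_b(H_{-1})$, with norm controlled by the constants. All estimates are made on the stochastic control representation of $v$ as the value function, with $m\in S$ frozen. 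The time dependence enters only through $m(t)$, and every hypothesis on $b$ and $f$ is uniform in $\mu$, so no constant will depend on $m$.

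\textbf{Boundedness and Lipschitz bound.} Boundedness of $v$ is immediate from the boundedness of $f$ on $H\times\Lambda_0\times\mathcal{P}_1(H)$ and of $G$. For the Lipschitz bound, I compare two trajectories started at $x$ and $x'$ under the same control. I apply It\^o's formula to $\|X(s)-X'(s)\|_{-1}^2$ and use the following facts:
\begin{itemize}
\item the weak $B$-condition $-A^*B+c_0B\ge0$;
\item Lipschitz continuity of $b$ and $\sigma$ in the $\|\cdot\|_{-1}$ norm, from Assumptions \ref{ass:HJB}(A4) and \ref{bsigmalipschitzw}(i);
\item boundedness of $B$, which gives $\|\cdot\|_{L_2(\Xi,H_{-1})}\le C\|\cdot\|_{L_2(\Xi,H)}$.
\end{itemize}
Gronwall's lemma then gives $\mathbb{E}\|X(s)-X'(s)\|_{-1}^2\le C\|x-x'\|_{-1}^2$. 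Together with the $H_{-1}$-Lipschitz continuity of $f$ and $G$, this yields $|v(t,x)-v(t,x')|\le C\|x-x'\|_{-1}$, so $v(t,\cdot)$ extends to $H_{-1}$.

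\textbf{Semiconcavity.} Fix $x,x'$, set $x_\lambda=\lambda x+(1-\lambda)x'$, and take an $\varepsilon$-optimal control $\alpha_\lambda$ for $x_\lambda$. Run the three states from $x$, $x'$, $x_\lambda$ all with the control $\alpha_\lambda$. The key estimate is
\[
\mathbb{E}\bigl\|\lambda X(s)+(1-\lambda)X'(s)-X_\lambda(s)\bigr\|_{-1}\le C\lambda(1-\lambda)\|x-x'\|_{-1}^2 .
\]
This comes from It\^o's formula applied to the combination, using the second-order conditions in Assumption \ref{bsigmafrechetw}: they control $\lambda b(X)+(1-\lambda)b(X')-b(X_\lambda)$ by $C\lambda(1-\lambda)\|X-X'\|_{-1}^2$, and likewise for $\sigma$. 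Combining this with the semiconcavity of $f(\cdot,\alpha,\mu)$ and of $G(\cdot,\mu)$ (which is $C^{1,1}_b(H_{-1})$), and with the Lipschitz bounds, gives
\[
\lambda v(t,x)+(1-\lambda)v(t,x')-v(t,x_\lambda)\le C\lambda(1-\lambda)\|x-x'\|_{-1}^2 .
\]

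\textbf{Semiconvexity: the main obstacle.} Semiconvexity is harder because the optimal controls for $x$ and $x'$ differ. I would take $\varepsilon$-optimal controls $\alpha,\alpha'$ for $x,x'$. The convex combination $\alpha_\lambda=\lambda\alpha+(1-\lambda)\alpha'$ is admissible because $\Lambda_0$ is convex, and I use it at $x_\lambda$. Now the drift error contains the extra term $C\lambda(1-\lambda)\|\alpha-\alpha'\|_\Lambda^2$. The cost side gives a compensating gain from the uniform convexity hypothesis: $(x,\alpha)\mapsto f+C\|x\|_{-1}^2-\nu\|\alpha\|^2_\Lambda$ is convex, which yields
\[
-\nu\lambda(1-\lambda)\int\mathbb{E}\|\alpha-\alpha'\|^2\,\mathrm{d}s .
\]
The plan is to show that for $\nu\ge\nu_0$ this negative term absorbs every control-dependent error term. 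Those error terms enter linearly through the Lipschitz constants of $f$ and $G$ in $x$, which are fixed by the data. I would track the errors with a Gronwall estimate in $\mathbb{E}\|\cdot\|_{-1}$ for the process $\lambda X+(1-\lambda)X'-X_\lambda$, bounding it by $C\lambda(1-\lambda)\bigl(\|x-x'\|_{-1}^2+\int\mathbb{E}\|\alpha-\alpha'\|^2\bigr)$, with a constant that does not depend on $\nu$. Choosing $\nu_0$ larger than that constant times the Lipschitz constants completes the semiconvexity bound.

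\textbf{Conclusion.} All constants are independent of $t$ and of $m$. Applying the Lasry--Lions principle then gives \eqref{eq:ex1regularity}.
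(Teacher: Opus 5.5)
Your proposal is correct and follows the paper's route: the paper simply invokes Theorems 5.10 and 5.12 of \cite{defeo_swiech_wessels_2025}, which are exactly uniform $H_{-1}$-Lipschitz, semiconcavity and (for $\nu\ge\nu_0$) semiconvexity estimates on the value-function representation followed by the Lasry--Lions $C^{1,1}$ criterion, and it adds only that the $t$-dependence through $m(t)$ and the boundedness of $v$ cause no extra difficulty. One bookkeeping point in your semiconvexity step: the terms $C\lambda(1-\lambda)\mathbb{E}\|X(s)-X'(s)\|_{-1}^2$ coming from the semiconvexity of $f$ and $G$ (and from the second-order bound on $b$) also contain $\int\mathbb{E}\|\alpha-\alpha'\|_\Lambda^2$, via a two-control version of your same-control Lipschitz estimate (cf.\ Lemma \ref{estimatex1x0one}), so $\nu_0$ must dominate those constants as well, not only the Lipschitz constants of $f$ and $G$.
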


If the control enters the state equation linearly, one can show that $\nu_0$ can be made arbitrarily small by choosing $T>0$ sufficiently small, see Theorem \ref{appendix_theorem_semiconvex}. In that case, the previous theorem can be viewed as a short time regularity result.

\begin{remark}
    For a class of Hamiltonians $\mathcal{H}$ satisfying Assumptions \ref{ass:HJB}(A2),(A3), we refer to \cite[Section 7]{federico_gozzi_swiech_2026}. There, the drift $b(x,\alpha)$ and the running cost $f(x,\alpha,\mu)$ are assumed to have a separated structure in $(x,\mu)$ and $\alpha$. Therefore, the nonlinear dependence on $p$ in the Hamiltonian is separated from the remaining terms, and we obtain the Lipschitz continuity of $\mathcal{H}$ in $(x,\mu)$ with respect to the $\|\cdot\|_{-1}$ norm and $\mathbf{d}_{1,-1}$ metric, respectively, by imposing that Lipschitz continuity on the coefficients $b$ and $f$. The Lipschitz continuity of $\mathcal{H}_p$ follows exactly as in \cite[Section 7]{federico_gozzi_swiech_2026}.
\end{remark}

\begin{remark}\label{remark_lambda_unbounded}   
    Now, let us consider the case in which $\Lambda_0$ may be unbounded. In this case, in addition to Assumptions \ref{bsigmalipschitzw} and \ref{bsigmafrechetw}, we impose the following assumptions. The function $b:H \times \Lambda\times \mathcal{P}_1(H) \to H$ is of the form $b(x,\alpha,\mu) = b_1(x,\mu) + b_2(x,\alpha,\mu)$ for some $b_1:H \times \mathcal{P}_1(H) \to H$ and $b_2:H \times \Lambda\times \mathcal{P}_1(H) \to H$, and there is a constant $C\geq 0$ such that for all $x\in H$, $\alpha\in \Lambda_0$, $\mu\in \mathcal{P}_1(H)$
    \begin{equation}
        \| b_1(x,\mu) \| \leq C(1+\|x\|) ,\quad \|b_2(x,\alpha,\mu)\| \leq C(1 +\|\alpha\|_{\Lambda} ).
    \end{equation}
    The function $f:H\times \Lambda \times \mathcal{P}_1(H) \to \mathbb{R}$ is of the form $f(x,\alpha,\mu) = f_1(x,\mu) + f_2(x,\alpha,\mu)$ for some $f_1 : H\times \mathcal{P}_1(H) \to \mathbb{R}$ and $f_2:H\times\Lambda\times \mathcal{P}_1(H) \to \mathbb{R}$, and there are constants $C,C_1\geq 0$, and $C_2,C_3>0$ such that for all $x\in H$, $\alpha\in \Lambda_0$, $\mu\in \mathcal{P}_1(H)$
    \begin{equation}
        | f_1(x,\mu) | \leq C, \quad - C_1 + C_2 \| \alpha \|_{\Lambda}^2 \leq f_2(x,\alpha,\mu) \leq C_1 + C_3 \| \alpha \|_{\Lambda}^2.
    \end{equation}
    First, let us note that one can still argue as before to obtain the $C^{1,1}_b(H_{-1})$ regularity of the value function by using its stochastic representation (note that the assumptions in \cite{defeo_swiech_wessels_2025} are implied by the assumptions in the present case). Moreover, it follows from \cite[Theorem 5.5]{defeo_swiech_wessels_2025}, that the value function is Lipschitz continuous in the spatial variable with respect to the $\|\cdot\|_{-1}$ norm, uniformly in $t\in [0,T]$. Let us denote by $R>0$ the uniform Lipschitz constant of $V(t,\cdot)$ with respect to the $\|\cdot\|$ norm. Second, we observe that there is a constant $C_R > 0$ such that
    \begin{equation}
        - \langle b_2(x,\alpha,\mu), p \rangle - f_2(x,\alpha,\mu) \leq C (1+ \| \alpha \|_{\Lambda}) \| p \| + C_1 - C_2 \| \alpha \|_{\Lambda}^2 \leq C_R -\frac{C_2}{2} \| \alpha \|_{\Lambda}^2
    \end{equation}
    for all $x\in H$, $\alpha\in \Lambda_0$, $\mu\in \mathcal{P}_1(H)$ and $p\in H$ with $\|p\|\leq R$, and
    \begin{equation}
        \sup_{\alpha\in \Lambda_0} \left \{ - \langle b_2(x,\alpha,\mu), p \rangle - f_2(x,\alpha,\mu) \right \} \geq C.
    \end{equation}
    Thus, choosing $K>0$ sufficiently large, we obtain that
    \begin{equation}
        \mathcal{H}(x,p,\mu) = \sup_{\substack{\alpha\in \Lambda_0 \\ \| \alpha \|_{\Lambda} \leq K}} \{ - \langle b(x,\alpha,\mu) , p \rangle - f(x,\alpha,\mu) \}
    \end{equation}
    for all $x\in H$, $p\in H$ with $\|p\|\leq R$ and $\mu\in \mathcal{P}_1(H)$. Therefore, we can argue as in the proof of 
    \cite[Proposition 4.6]{defeo_swiech_wessels_2025} to get that $v=v_K$, where $v_K$ is the value function where the control set is restricted to the bounded set $\Lambda_K:=\{\alpha\in\Lambda_0:\| \alpha \|_{\Lambda} \leq K\}$. In particular $v$ is then also uniformly continuous in $(t,x)$ on bounded subsets of $[0,T]\times H$ and is the unique viscosity solution of \eqref{HJB} among bounded $B$-continuous functions which are Lipschitz continuous in $x$ and uniformly continuous on bounded subsets of $[0,T]\times H$.
\end{remark}

\subsection{Examples for the Uniqueness of the MFG System}

In this section, we discuss two examples of functions {$F$} that satisfy Assumption \ref{hp:uniqueness}(iii). The structure of the functions resembles that of the examples in \cite{federico_gozzi_swiech_2026} (see also \cite[Section 3.4.2]{carmona_delarue_2018}, when $H=\mathbb{R}^d$), however, note that here we need to work with the norm on $H_{-1}$ instead of the norm on $H$.

\begin{example}\label{example1}
Let $h_1:H \to\mathbb{R}$ and $h_2:H\to H$ be bounded and satisfy
\[
|h_1(x)-h_1(y)|+\|h_2(x)-h_2(y)\|\leq C\|x-y\|_{-1}
\]
for some $C\geq 0$ and all $x,y\in H$.
We define $F_1,F_2 : H \times \mathcal{P}_1(H) \to \mathbb{R}$ as
\[
F_1(x,\mu):=h_1(x)G_1\left(\int_Hh_1(y)\mu(\mathrm{d} y)\right),\quad F_2(x,\mu):=\left\langle h_2(x),G_2\left(\int_Hh_2(y)\mu(\mathrm{d} y)\right)\right\rangle,
\]
where $G_1:\mathbb{R}\to\mathbb{R}$ is Lipschitz continuous and strictly increasing and $G_2:H\to H$ is Lipschitz continuous and strictly monotone.
Then, recalling that $\mathcal{P}_1(H)\subset \mathcal{P}_1(H_{-1})$, if $\gamma\in \Gamma_{-1}(\mu_1,\mu_2)$,
\[
\begin{split}
|F_1(x_1,\mu_1)-F_1(x_2,\mu_2)|&\leq C_1\|x_1-x_2\|_{-1}+C_2\left|\int_{H_{-1}}h_1(y)\mu_1(\mathrm{d} y)-\int_{H_{-1}}h_1(z)\mu_2(\mathrm{d} z)\right|
\\
&
\leq C_1\|x_1-x_2\|_{-1}+C_3\int_{H_{-1}^2} \| y-z \|_{-1} \gamma(\mathrm{d}y,\mathrm{d}z).
\end{split}
\]
Taking the infimum over $\gamma \in \Gamma_{-1}(\mu_1,\mu_2)$ gives
\[
|F_1(x_1,\mu_1)-F_1(x_2,\mu_2)|\leq C_1\|x_1-x_2\|_{-1}+C_3\mathbf{d}_{1,-1}(\mu_1,\mu_2).
\]
A similar calculation gives the same result for $F_2$. Thus both functions satisfy Assumption \ref{ass:HJB}(A2).
We also have
\[
\begin{split}
\int_H [F_2(x,\mu_1)&-F_2(x,\mu_2)](\mu_1-\mu_2)(\mathrm{d}x)
\\
&=\left\langle\int_Hh_2(y)\mu_1(\mathrm{d}y)-\int_Hh_2(y)\mu_{2}(\mathrm{d}y),
G_2\left(\int_Hh_2(y)\mu_1(\mathrm{d} y)\right)-G_2\left(\int_Hh_2(y)\mu_2(\mathrm{d} y)\right)\right\rangle,
\end{split}
\]
and the corresponding identity also holds for $F_1$. Hence, Assumptions \ref{hp:uniqueness}(iii) and \ref{hp:uniqueness}(iv)(b) are satisfied for $F_1$ and $F_2$.

Regarding the $C^{1,1}(H_{-1})$ regularity of $F_2$, we observe that
\begin{equation}
    D_{H_{-1}} F_2(x,\mu) = \left ( D_{H_{-1}}h_2(x) \right )^* G_2\left(\int_H h_2(y) \mu(\mathrm{d}y)\right).
\end{equation}
Thus, if $D_{H_{-1}} h_2 : H_{-1} \to L( H_{-1} ,H)$ is Lipschitz continuous, then, for fixed $\mu \in \mathcal{P}_1(H)$, $F_2(\cdot,\mu) \in C^{1,1}(H_{-1})$. Similarly, for fixed $\mu\in \mathcal{P}_1(H)$, if $h_1 \in C^{1,1}(H_{-1})$, then $F_1(\cdot,\mu)\in C^{1,1}(H_{-1})$.

\end{example}

\begin{example}\label{ex:convolution}
Let $\ell:H\times[0,\infty)\to \mathbb{R}$ be bounded, suppose that $\ell(x,\cdot)$ is strictly increasing for every $x\in H$ and let
\[
|\ell(x,r)-\ell(y,s)|\leq C(\|x-y\|_{-1}+|r-s|)\quad\forall x,y\in H,r,s\in [0,\infty)
\]
for some $C\geq 0$. Let $\rho:H\to[0,\infty)$ be bounded and such that
\[
|\rho(x)-\rho(y)|\leq C\|x-y\|_{-1}\quad\forall x,y\in H
\]
for some $C\geq 0$. Let $\nu$ be a positive finite measure on $H$ with full support, i.e., $\nu(D)>0$ for every non-empty open set $D$ in $H$. We define
\[
F(x,\mu):=\int_H \ell(z,\rho\ast\mu(z))\rho(z-x)\nu(\mathrm{d}z),
\]
where $\rho\ast\mu$ denotes the convolution of $\rho$ with $\mu$, that is
\[
\rho\ast\mu(z)=\int_H \rho(z-u)\mu(\mathrm{d}u).
\]

Let us check Assumption \ref{ass:HJB}(A2). We first note that
\begin{equation}
    | \rho\ast\mu_1(z) - \rho\ast\mu_2(y) | \leq C ( \|z-y\|_{-1} + \mathbf{d}_{1,-1}(\mu_1,\mu_2) ), \quad \forall \mu_1,\mu_2 \in \mathcal{P}_1(H), z,y\in H.
\end{equation}
Indeed, let $\gamma\in \Gamma_{-1}(\mu_1,\mu_2)$. Then, considering $\mu_1,\mu_2$ as elements of $\mathcal{P}_1(H_{-1})$, we have
\begin{equation}
\begin{split}
    &| \rho \ast \mu_1(z) - \rho\ast\mu_2(y) |\\
    &= \left | \int_{H_{-1}} \rho(z-u) \mu_1(\mathrm{d}u) - \int_{H_{-1}} \rho(y-w) \mu_2(\mathrm{d}w) \right | = \left | \int_{H_{-1}^2} \left ( \rho(z-u) - \rho(y-w) \right ) \gamma(\mathrm{d}u,\mathrm{d}w) \right | \\
    &\leq C \int_{H_{-1}^2} \| ( z-u) - (y-w) \|_{-1} \gamma(\mathrm{d}u,\mathrm{d}w) \leq C \|z-y\|_{-1} + C \int_{H_{-1}^2} \| u-w \|_{-1} \gamma(\mathrm{d}u,\mathrm{d}w)
\end{split}
\end{equation}
for all $\gamma\in \Gamma_{-1}(\mu_1,\mu_2)$. Taking the infimum over $\gamma \in \Gamma_{-1}(\mu_1,\mu_2)$ yields the claim. Thus, we obtain
\begin{equation}
\begin{split}
    | F(x,\mu_1) - F(x,\mu_2) | &= \left | \int_H \ell (z,\rho \ast \mu_1(z)) \rho(z-x) \nu(\mathrm{d}z) - \int_H \ell (z,\rho \ast \mu_2(z)) \rho(z-x) \nu(\mathrm{d}z) \right |\\
    &\leq C \int_H \left | \ell (z,\rho\ast\mu_1(z)) - \ell(z,\rho\ast\mu_2(z)) \right | \nu(\mathrm{d}z)\\
    &\leq C \int_H \mathbf{d}_{1,-1}(\mu_1,\mu_2) \nu(\mathrm{d}z) = C \mathbf{d}_{1,-1}(\mu_1,\mu_2),
\end{split}
\end{equation}
which shows that $F$ satisfies Assumption \ref{ass:HJB}(A2). Assumptions \ref{hp:uniqueness}(iii) and \ref{hp:uniqueness}(iv)(b) are proved exactly as in \cite[Example 7.2]{federico_gozzi_swiech_2026}. Finally, note that
\begin{equation}
    D_{H_{-1}} F(x,\mu) = - \int_H \ell(z,\rho\ast\mu(z)) D_{H_{-1}} \rho(z-x) \nu(\mathrm{d}z).
\end{equation}
Therefore, if $\rho \in C^{1,1}(H_{-1})$, then $F(\cdot,\mu) \in C^{1,1}(H_{-1})$.
\end{example}

\section{Application to Optimal Advertising}\label{section_optimal_advertising} 

Extending classical mean field game models to incorporate memory effects naturally leads to dynamics governed by stochastic delay differential equations. Such models have been studied in the context of mean field control in \cite{defeo_gozzi_swiech_wessels_2025,fouque_zhang_2020,gozzi_masiero_rosestolato_2024,guatteri_masiero_wessels_2025} and mean field games in \cite{fouque_zhang_2018}. Note that the model in \cite{fouque_zhang_2018} is linear-quadratic. We consider here a mean field game of this type and show that it can be embedded within our general framework.

Given a flow of distributions $(m(s))_{s\in [t,T]}$, we consider the cost functional
\begin{equation}\label{cost_functional_delay_example}
\begin{split}
    J(t,x^0,x^1;\alpha(\cdot)) &= \mathbb{E} \left [ \int_t^T \left ( \nu | \alpha(s) |^2 - \frac{1}{1+ \exp(y(s))} + \int_{\mathbb{R}} \exp \left ( - \frac{(y(s) - z)^2}{2} \right ) m(s,\mathrm{d}z) \right ) \mathrm{d}s \right ]\\
    &\quad + \mathbb{E} \left [ - \frac{1}{1+ \exp(y(T))} + \int_{\mathbb{R}} \exp \left ( - \frac{(y(T) - z )^2}{2} \right ) m(T,\mathrm{d}z) \right ]
\end{split}
\end{equation}
subject to the dynamics
\begin{equation}\label{delay_SDE}
\begin{cases}
    \mathrm{d}y(s) = \left [ b_0 \left ( y(s) ,\int_{-d}^0 \eta(\theta) y(s+\theta) \mathrm{d}\theta \right ) + c_0 \alpha(s) \right ] \mathrm{d}s + \sigma_0(y(s)) \mathrm{d}W(s)\\
    y(t) = x^0 \in \mathbb{R}, \quad y(\theta) = x^1(\theta), \quad \theta \in [-d,0),
\end{cases}
\end{equation}
where $(x^0,x^1)\in \mathbb{R} \times L^2(-d,0)$ is the initial condition, and $d>0$ is the time horizon for the delay which enters the equation through some delay kernel $\eta \in W^{1,2}(-d,0)$. Moreover, $(W(s))_{s\in [0,T]}$ is a real-valued Brownian motion on some filtered probability space $(\Omega,\mathcal{F}, (\mathcal{F}_s)_{s\in [t,T]},\mathbb{P})$, $c_0 \in \mathbb{R}$, and $b_0: \mathbb{R}^2 \to \mathbb{R}$ and $\sigma : \mathbb{R} \to \mathbb{R}$ are Lipschitz continuous and bounded. Finally, the control $\alpha: [t,T]\times \Omega \to \Lambda_0 := [0,\infty)$ is assumed to be progressively measurable and square integrable, and we denote by $\mathcal{A}_t$ the set of admissible controls. For the precise setup, see \cite{defeo_federico_swiech_2024} or \cite{defeo_swiech_wessels_2025}.

In this model, the state variable $y(\cdot)$ represents the goodwill of a company, which is an indicator for the value of the brand taking into account the consumer awareness and the brand loyalty. The natural decay of the goodwill is modeled by the function $b_0$. The delay term takes into account memory effects in the consumer's perception of the brand. It is modeled with a delay kernel $\eta$. The control $\alpha\geq 0$ is the advertising expenditure and $c_0 >0$ is the effectiveness of advertising. Finally, we include noise in the equation to model random fluctuations in the goodwill.

In this mean field game, the representative company seeks to minimize the objective functional $J$ defined in \eqref{cost_functional_delay_example} and is competing with other companies through the distribution $m$. The logistic terms $-1/(1+ \exp(y(s)) )$ and $-1/(1+ \exp(y(T)))$ represent an intrinsic reward associated with the company's goodwill, which models diminishing returns and saturation. The interaction integrals penalize the company for having a level of goodwill similar to that of many other firms, reflecting the incentive to differentiate its brand and reduce direct competition for market share. Finally, the term $\nu |\alpha(s)|^2$, $\nu>0$, accounts for the cost of advertising efforts.

Let us now show how this problem can be rewritten to fit the setting in the present paper employing a well known lifting procedure to restore Markovianity, see e.g. \cite{defeo_federico_swiech_2024}. We introduce the Hilbert space $H= \mathbb{R} \times L^2(-d,0)$ with its natural inner product and define the following transport-type operator $A:\mathcal{D}(A) \subset H \to H$ via
\begin{equation}\label{definition_A}
    Ax = \begin{bmatrix} 0 \\ x_1' \end{bmatrix}, \quad \mathcal{D}(A) = \left \{ x = (x_0,x_1) \in H: x_1\in W^{1,2}(-d,0), x_1(0) = x_0 \right \}.
\end{equation}
Moreover, we introduce the functions $b: H\times \Lambda_0 \to H$ and $\sigma:H\to L(\mathbb{R},H)$ defined by
\[
    b(x,\alpha) = \begin{bmatrix} b_0 \left ( x_0, \int_{-d}^0 \eta(\theta) x_1(\theta) \mathrm{d}\theta \right ) + c_0 \alpha \\ 0 \end{bmatrix}, \quad x=(x_0,x_1) \in H, \alpha \in \Lambda_0.
\]
\[
\sigma(x)w=\begin{bmatrix} \sigma_0(x_0)w\\ 0 \end{bmatrix}, \quad x=(x_0,x_1) \in H, w\in\mathbb{R}.
\]
Given $x\in H$ and $\alpha(\cdot) \in \mathcal{A}_t$, we consider the infinite dimensional SDE
\begin{equation}\label{lifted_SDE}
\begin{cases}
    \mathrm{d}X(s) = [ AX(s) + b(X(s),\alpha(s)) ] \mathrm{d}s + \sigma(X(s)) \mathrm{d}W(s)\\
    X(t) = x.
\end{cases}
\end{equation}
We have the following correspondence between the finite dimensional delay equation \eqref{delay_SDE} and the lifted equation \eqref{lifted_SDE}, see \cite[Theorem 3.4]{federico_tankov_2015}.
\begin{proposition}
    Given $x\in H$ and $\alpha(\cdot) \in \mathcal{A}_t$, let $y^{x,\alpha}(\cdot)$ be the unique strong solution to \eqref{delay_SDE}, and let $X^{x,\alpha}(\cdot)$ be the unique mild solution to \eqref{lifted_SDE}. Then
    \begin{equation}
        X^{x,\alpha}(s) = ( y^{x,\alpha}(s), y^{x,\alpha}(s+\cdot)|_{[-d,0]} ) \quad \forall s\geq t.
    \end{equation}
\end{proposition}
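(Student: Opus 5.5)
The plan is to show the identity componentwise, using uniqueness of mild solutions of \eqref{lifted_SDE}. Set $Z(s):=(y^{x,\alpha}(s),y^{x,\alpha}(s+\cdot)|_{[-d,0]})$ for $s\geq t$, with $y^{x,\alpha}(t+\theta)=x_1(\theta)$ for $\theta\in[-d,0)$. This is an adapted $H$-valued process with continuous trajectories, since translation is continuous in $L^2$. We will verify that $Z$ satisfies the mild formulation
\[
Z(s)=\mathrm{e}^{(s-t)A}x+\int_t^s \mathrm{e}^{(s-r)A}b(Z(r),\alpha(r))\,\mathrm{d}r+\int_t^s \mathrm{e}^{(s-r)A}\sigma(Z(r))\,\mathrm{d}W(r).
\]
Pathwise uniqueness of mild solutions for Lipschitz coefficients (cf. \cite[Theorem 7.2]{daprato_zabczyk_2014}) then forces $X^{x,\alpha}=Z$. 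Here $b(\cdot,\alpha)$ and $\sigma$ are Lipschitz in $x$ because $b_0,\sigma_0$ are Lipschitz and $x_1\mapsto\int\eta x_1$ is bounded.

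\textbf{Step 1 (the semigroup).} We compute $\mathrm{e}^{\tau A}$ explicitly. For $x=(x_0,x_1)$ we expect
\[
\mathrm{e}^{\tau A}x=\big(x_0,\ \theta\mapsto x_0\mathbf 1_{[-\tau,0]}(\theta)+x_1(\theta+\tau)\mathbf 1_{[-d,-\tau)}(\theta)\big).
\]
To confirm this, we check that the formula defines a $C_0$-semigroup and that its generator on $\mathcal{D}(A)$ is $x\mapsto(0,x_1')$. Equivalently, we check that the resolvent of the formula equals $(\lambda-A)^{-1}$, which is found by solving $\lambda x_1-x_1'=f_1$ with $x_1(0)=x_0=f_0/\lambda$.

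\textbf{Step 2 (convolution terms).} Both $b$ and $\sigma$ take values in $\mathbb{R}\times\{0\}$. By Step 1, $\mathrm{e}^{(s-r)A}(c,0)=(c,\ c\,\mathbf 1_{[-(s-r),0]}(\cdot))$. Let $g(r)$ denote the first component of $b(Z(r),\alpha(r))$, which equals $b_0(y(r),\int\eta y(r+\cdot))+c_0\alpha(r)$. Then the first component of the mild formula reads
\[
x_0+\int_t^s g\,\mathrm{d}r+\int_t^s\sigma_0(y(r))\,\mathrm{d}W(r),
\]
and this equals $y(s)$ by \eqref{delay_SDE}.

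For the second component, evaluated at $\theta$, the deterministic integral gives $\int_{\max(t,s+\theta)}^s g\,\mathrm{d}r$. To handle the stochastic integral we apply the stochastic Fubini theorem, or pair the stochastic integral with test functions $h\in L^2(-d,0)$; this gives $\int_{\max(t,s+\theta)}^s\sigma_0\,\mathrm{d}W$. Adding the initial term, in the case $s+\theta\geq t$ we get $x_0+\int_t^{s+\theta}(\dots)$, which is $y(s+\theta)$. In the case $s+\theta<t$ we get $x_1(\theta+s-t)=y(s+\theta)$.

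\textbf{Main obstacle.} The one delicate point is identifying the $L^2(-d,0)$-valued stochastic convolution with the pointwise expression. We would argue weakly: take the inner product with arbitrary $h\in L^2(-d,0)$ and use stochastic Fubini, which applies because the integrand is bounded. Everything else is bookkeeping. Alternatively, the whole statement can simply be cited from \cite[Theorem 3.4]{federico_tankov_2015}.
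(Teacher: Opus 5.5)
Your approach is sound but differs from the paper, which gives no argument of its own and simply invokes \cite[Theorem 3.4]{federico_tankov_2015} (the citation you mention as a fallback). Your route is a self-contained verification: an explicit semigroup, a check that the lifted process $Z$ satisfies the mild equation, then uniqueness of mild solutions for Lipschitz coefficients. It shows exactly what is used: $b_0,\sigma_0$ Lipschitz, $\eta\in L^2(-d,0)$, and boundedness of $\sigma_0$ for stochastic Fubini. The cost is a page of bookkeeping that the citation avoids. Your semigroup formula in Step 1 is correct; its Laplace transform reproduces $(\lambda-A)^{-1}$.

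However, Step 2 contains an error in the integration range that must be fixed. We have $\mathbf{1}_{[-(s-r),0]}(\theta)=1$ iff $r-s\le\theta$, i.e. iff $r\le s+\theta$. So both convolution integrals in the second component run over $r\in[t,s+\theta]$ when $s+\theta\ge t$, and they vanish when $s+\theta<t$. They do not run over $[\max(t,s+\theta),s]$.

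With the range as you wrote it, adding the initial term gives:
\begin{itemize}
\item $x_0+y(s)-y(s+\theta)$ when $s+\theta\ge t$;
\item $x_1(\theta+s-t)+y(s)-x_0$ when $s+\theta<t$.
\end{itemize}
Neither equals $y(s+\theta)$. Your two stated conclusions are consistent only with the corrected range, so as written the derivation does not support them.

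Relatedly, the pointwise identification of the $L^2(-d,0)$-valued stochastic convolution should read $\theta\mapsto \mathbf{1}_{\{s+\theta\ge t\}}M(s+\theta)$, where $M(u)=\int_t^u\sigma_0(y(r))\,\mathrm{d}W(r)$. Take $M$ in a continuous version, so that it is jointly measurable in $(\theta,\omega)$ and stochastic Fubini applies. This also lets the a.s.\ identity for each fixed $s$ extend to all $s\ge t$ by continuity of both sides.

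With these corrections your argument is complete.
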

This proposition in particular implies that the first marginal of the law of $X^{x,\alpha}(s)$ on $\mathbb{R}^n$ coincides with the law of $y^{x,\alpha}(s)$ for all $s\geq t$.
  
Moreover, we can rewrite the cost functional \eqref{cost_functional_delay_example} as follows: We introduce the functions $f_1:H \times \mathcal{P}_1(H) \to \mathbb{R}$ and $f_2: \Lambda_0 \to \mathbb{R}$ defined as
\begin{equation}
    f_1(x,\mu) = - \frac{1}{1+\exp(x_0)} + \int_{H} \exp \left ( - \frac{(x_0 - z_0)^2}{2} \right ) \mu(\mathrm{d}z) ,\quad f_2(\alpha)  = \nu |\alpha|^2,
\end{equation}
for $x = (x_0,x_1) \in H$, $\alpha\in \Lambda_0$, $\mu\in \mathcal{P}_1(H)$, and $G:H \times \mathcal{P}_1(H) \to\mathbb{R}$ defined as
\begin{equation}
    G(x,\mu) = - \frac{1}{1+\exp(x_0)} + \int_{H} \exp \left ( - \frac{(x_0-z_0)^2}{2} \right ) \mu(\mathrm{d}z),\quad x= (x_0,x_1) \in H, \mu\in \mathcal{P}_1(H).
\end{equation}
We have
\begin{equation}
    J(t,x;\alpha(\cdot)) = \mathbb{E} \left [ \int_t^T \left ( f_1(X(s),m(s)) + f_2(\alpha(s)) \right ) \mathrm{d}s + G(X(T),m(T)) \right ].
\end{equation}

Note that the operator $A$ defined in \eqref{definition_A} is not maximal dissipative. Therefore, we introduce the operator $\tilde{A}: \mathcal{D}(\tilde{A}) \subset H \to H$, $\tilde{A} := A- (x_0,0)$ and the drift $\tilde{b} : H \times \Lambda_0 \to H$,
\begin{equation}
    \tilde{b}(x,\alpha) = \begin{bmatrix} b_0 \left ( x_0, \int_{-d}^0 \eta(\theta) x_1(\theta) \mathrm{d}\theta \right ) + x_0 + c_0 \alpha \\ 0 \end{bmatrix}, \quad x=(x_0,x_1) \in H, \alpha \in \Lambda_0.
\end{equation}
and rewrite equation \eqref{lifted_SDE} as
\begin{equation}\label{lifted_SDE_rewritten}
\begin{cases}
    \mathrm{d}X(s) = [ \tilde{A}X(s) + \tilde{b}(X(s),\alpha(s)) ] \mathrm{d}s + \sigma(X(s)) \mathrm{d}W(s)\\
    X(t) = x.
\end{cases}
\end{equation}
Note that the mild solutions of \eqref{lifted_SDE} and \eqref{lifted_SDE_rewritten} coincide. Moreover, one can show that $\tilde{A}$ satisfies Assumption \ref{ass:HJB}(A1) with $B= (\tilde{A}^{-1})^* \tilde{A}^{-1}$, and it holds
\begin{equation}\label{estimate_delay_weak_norm}
    |x_0 | \leq \| x \|_{-1}, \quad \forall x=(x_0,x_1) \in H.
\end{equation}
For more details on the construction of $B$ and its properties in the present setting, see \cite[Section 3]{defeo_federico_swiech_2024}.

Next, let us show that Assumptions \ref{ass:HJB}(A2)--(A6) are satisfied. To this end, let us compute the Hamiltonian. We have for $x,p\in H$, $x=(x_0,x_1)$, $p=(p_0,p_1)$, and $\mu\in \mathcal{P}_1(H)$, $z=(z_0,z_1)$,
\begin{equation}\label{example_hamiltonian}
\begin{split}
    \mathcal{H}(x,p,\mu) &= \sup_{\alpha \in [0,\infty)} \left \{ - \langle \tilde{b}(x,\alpha), p \rangle - f_1(x,\mu) - f_2(\alpha) \right \}\\
    &= \sup_{\alpha \in [0,\infty)} \bigg \{ - b_0 \left ( x_0, \int_{-d}^0 \eta(\theta) x_1(\theta) \mathrm{d}\theta \right ) p_0 - x_0 p_0 - c_0 \alpha p_0\\
    &\qquad\qquad\qquad - \nu | \alpha |^2 + \frac{1}{1+\exp(x_0)} - \int_{H} \exp \left ( - \frac{(x_0 - z_0)^2}{2} \right ) \mu(\mathrm{d}z) \bigg \}\\
    &=: \mathcal{H}^0(x,p) - F(x,\mu),
\end{split}
\end{equation}
where $\mathcal{H}^0: H\times H\to \mathbb{R}$ is given by
\begin{equation}\label{H_0}
\begin{split}
    \mathcal{H}^0(x,p) = \begin{cases} 
        - b_0 \left ( x_0, \int_{-d}^0 \eta(\theta) x_1(\theta) \mathrm{d}\theta \right ) p_0 - x_0 p_0 + \frac{c_0^2 p_0^2}{4\nu}, & p_0 <0\\
        - b_0 \left ( x_0, \int_{-d}^0 \eta(\theta) x_1(\theta) \mathrm{d}\theta \right ) p_0 - x_0 p_0, & p_0 \geq 0.
    \end{cases}
\end{split}
\end{equation}
and $F:H\times\mathcal{P}_1(H) \to \mathbb{R}$ is given by
\begin{equation}
    F(x,\mu) = - \frac{1}{1+\exp(x_0)} + \int_{H} \exp \left ( - \frac{(x_0 - z_0)^2}{2} \right ) \mu(\mathrm{d}z).
\end{equation}
First, we note that we are in the setting of Remark \ref{remark_lambda_unbounded}. Indeed, the regularity needed in Assumptions \ref{bsigmalipschitzw} and \ref{bsigmafrechetw} follows as in \cite[Section 6.2]{defeo_swiech_wessels_2025}, see also \cite[Lemma 4.1]{defeo_federico_swiech_2024}. Hence, arguing as in the proofs of \cite[Propositions 4.6, 4.15]{defeo_swiech_wessels_2025}, we may restrict our attention to properties of the Hamiltonian $\mathcal{H}$ on $H \times B_R(0) \times \mathcal{P}_1(H)$, for some $R>0$. On this set, it follows easily using $\eta \in W^{1,2}(-d,0)$ and \eqref{estimate_delay_weak_norm} that $\mathcal{H}^0$ satisfies Assumption \ref{ass:HJB}(A2), see also \cite[Lemma 4.1]{defeo_federico_swiech_2024}. The Lipschitz continuity of $F$ with respect to the $\|\cdot\|_{-1}$ norm and $\mathbf{d}_{1,-1}$ metric, respectively, is established analogously as for the convolution term in Example \ref{ex:convolution}, invoking \eqref{estimate_delay_weak_norm}.
% For the first term, we use \eqref{estimate_delay_weak_norm}. For second term, we note that
%\begin{equation}
%\begin{split}
%    &\left | \int_{\mathbb{R}} \exp \left ( - \frac{(x_0 - z_0)^2}{2} \right ) \mu(\mathrm{d}z_0) - \int_{\mathbb{R}} \exp \left ( - \frac{(y_0 - z_0)^2}{2} 
%\right ) \mu'(\mathrm{d}z_0) \right |\\
  %  &\leq \left | \int_{\mathbb{R}} \left ( \exp \left ( - \frac{(x_0-z_0)^2}{2} \right ) - \exp \left ( - \frac{(y_0-z_0)^2}{2} \right ) \right ) \mu ( \mathrm{d}z_0) %\right | + \left | \int_{\mathbb{R}} \exp \left ( - \frac{(y_0-z_0)^2}{2} \right ) ( \mu - \mu' )(\mathrm{d}z_0) \right |\\
   % &\leq C \left ( | x_0 - y_0 | + \mathbf{d}_{1,-1}(\mu,\mu') \right ),
%\end{split}
%\end{equation}
%where we used the Kantorovich--Rubinstein formula to estimate the second term, see \cite[Particular Case 5.17]{villani_2009}. 
Thus, Assumption \ref{ass:HJB}(A2) is satisfied. Assumption \ref{ass:HJB}(A3) follows easily from \eqref{H_0} using \eqref{estimate_delay_weak_norm}. Assumptions \ref{ass:HJB}(A4) and (A5) also follow from \eqref{estimate_delay_weak_norm}.

In order to show that Assumption \ref{ass:HJB}(A6) is satisfied, we show that the viscosity solutions $v(t,\cdot)$, $v_n(t,\cdot)$ and $v_{n,N}(t,\cdot)$ of \eqref{HJB}, \eqref{HJBn} and \eqref{HJBnN}, respectively, are semiconcave and semiconvex in $H_{-1}$, uniformly in $n,N,t\in [0,T]$. To this end, note that all three functions can be interpreted as the value functions for corresponding control problems. Thus, we can apply the arguments from \cite[Section 5.2]{defeo_swiech_wessels_2025} to obtain the Lipschitz continuity and semiconcavity of these functions in $H_{-1}$ with constants independent of $n,N,t$. For the semiconvexity, we follow the same strategy as in \cite[Section 5.3.1]{defeo_swiech_wessels_2025} with a small refinement. Since in that reference, the control enters the state equation through a nonlinear function, the constant $\nu$ in the control cost needs to be greater than some constant $C$ that depends on the data of the problem. In the current case, the control enters the state equation linearly. Hence, we can apply the refined Theorem \ref{appendix_theorem_semiconvex} which states that $\nu$ needs to be greater than $C_1 \mathrm{e}^{C_2 T} T$ for some constants $C_1,C_2$ depending on the data of the problem but independent of $T$ (and here independent of $n,N$). Thus, this result can be viewed as a short time result: For a given parameter $\nu$ in the control cost, we obtain $C^{1,1}$ regularity for a sufficiently small time horizon $T$.

Finally, let us discuss the assumptions needed for uniqueness of the solution of the MFG system. It follows from \eqref{example_hamiltonian} and \eqref{H_0} that Assumption \ref{hp:uniqueness}(i) and (ii) are satisfied. Moreover, by \cite[Section 3.4.2, Example 5]{carmona_delarue_2018}, it is easy to see that 
\[ \int_H [ F(x,\mu_1) - F(x,\mu_2) ] ( \mu_1 - \mu_2)(\mathrm{d}x) 
\]
is non-negative and equal to zero if and only if the $x_0$-marginals of $\mu_1$ and $\mu_2$ coincide. Alternatively one can notice that the function $F(x,\mu)$ can be expressed as a version of the function $F_2$ from Example \ref{example1}.

%we have
%\begin{equation}
 %   \int_H [ F(x,\mu_1) - F(x,\mu_2) ] ( \mu_1 - \mu_2)(\mathrm{d}x) = \int_{\mathbb{R}} \int_{\mathbb{R}} \exp \left ( - \frac{(x_0 - z_0)^2}{2} \right ) %(\mu_1 - \mu_2)(\mathrm{d}z_0) (\mu_1 - \mu_2)(\mathrm{d}x_0) 
%\end{equation}
%which is non-negative and equal to zero if and only if the first marginals of $\mu_1$ and $\mu_2$ coincide, see \cite[Section 3.4.2, Example 5]
%{carmona_delarue_2018}. Thus, Assumption \ref{hp:uniqueness}(iii) and (iv)(b) are satisfied.

\appendix

\section{Viscosity Solutions of PDEs in Hilbert Spaces}\label{section_appendix}

We recall here the definition of a viscosity solution for a terminal value degenerate parabolic PDE on a Hilbert space with an unbounded operator from \cite[Section 3.3]{fabbri_gozzi_swiech_2017}.

Throughout this section $(V,\langle\cdot,\cdot\rangle)$ is a real separable Hilbert space. We denote by $S(V)$ the space of self-adjoint operators in $L(V)$.

\subsection{\texorpdfstring{$B$}{B}-Continuity}

Let $B \in S(V)$ be a strictly positive operator in $V$ and $I\subset\mathbb{R}$ be an interval.

\begin{definition}[$B$-continuity]\label{definition_b_continuity}
    Let  $u: I \times V \to \mathbb{R}$. We say that $u$ is $B$-upper semicontinuous (respectively, $B$-lower semicontinuous) if, for any sequences $\left(t_n\right)\subset I$ and $\left(x_n\right)\subset V$ such that $t_n \rightarrow t \in I,$ $ {x_n \rightharpoonup x }\in V$, $B x_n \rightarrow B x$ as $n \rightarrow \infty$, we have $\limsup _{n \rightarrow \infty} u\left(t_n, x_n\right) \leq u(t, x)$ (respectively,  $\liminf _{n \rightarrow \infty} u\left(t_n, x_n\right) \geq u(t, x)$). We say that $u$ is $B$-continuous if it is both $B$-upper semicontinuous and $B$-lower semicontinuous.
\end{definition}

\begin{remark}
    In the present paper the operator $B$ is assumed to be compact. In this case, a function is $B$-upper semicontinuous (respectively, $B$-lower semicontinuous, $B$-continuous) if and only if it is weakly sequentially upper semicontinuous (respectively, weakly sequentially lower semicontinuous, weakly sequentially continuous), see \cite[Lemma 3.6]{fabbri_gozzi_swiech_2017}.
\end{remark}

\subsection{Viscosity Solutions}

Consider the following terminal value PDE in the Hilbert space $V$
\begin{equation}\label{eq:PDE_app}
\begin{cases}
    \partial_t u+\langle A x, D u\rangle+\mathcal F\left(t, x, D u, D^2 u\right)=0, \quad (t,x)\in(0, T) \times V\\
    u(T, x)=g(x), \quad  x \in V,
\end{cases}
\end{equation}
where $A:\mathcal D(A)\subset V\to V$ is a linear densely defined maximal dissipative operator, and $\mathcal F: [0,T]\times V \times V \times S(V) \rightarrow \mathbb{R}$ and $g: V \rightarrow \mathbb{R}$ are continuous. We also assume that $\mathcal F\left(t, x, p, X\right)\leq \mathcal F\left(t, x, p, Z\right)$ for all $t\in[0,T],x,p\in V$ and $X,Z\in S(V), X\leq Z$. Let $B \in S(V)$ be strictly positive and such that $A^*B\in L(V)$.
\begin{definition}\label{def:test_functions_hilbert}
    A function $\psi \colon (0,T)\times V\to \mathbb R$ is a test function if $\psi = \varphi + h(t,\|x\|)$, where $\varphi \in C^{1,2}((0, T) \times V)$ is locally bounded and is such that $\varphi$ is $B$-lower semicontinuous, $\partial_t \varphi, A^* D \varphi, D \varphi, D^2 \varphi$ are uniformly continuous on $(0, T) \times V$, $h \in C^{1,2}((0, T) \times \mathbb{R})$ is such that for every $t \in(0, T), h(t, \cdot)$ is even and non-decreasing on $[0,+\infty)$.
\end{definition}

\begin{definition}\label{def:viscosity_solution_hilbert}
A locally bounded $B$-upper semicontinuous function $u:(0, T] \times V \to \mathbb{R}$ is a $B$-continuous viscosity subsolution of \eqref{eq:PDE_app} if $u(T, y) \leq g(y)$ for all $y \in V$ and whenever $u-\psi$ has a local maximum at a point $(t, x) \in(0, T) \times V$ for a test function $\psi(s, y)=\varphi(s, y)+h(s,\|y\|)$, then
\begin{equation}
\partial_t \psi(t, x)+\left\langle x, A^* D \varphi(t, x)\right\rangle+\mathcal F\left(t, x, D \psi(t, x), D^2 \psi(t, x)\right) \geq 0.
\end{equation}
A locally bounded $B$-lower semicontinuous function $u$ on $(0, T] \times V$ is a $ B$-continuous viscosity supersolution of \eqref{eq:PDE_app} if $u(T, y) \geq g(y)$ for $y \in V$ and whenever $u+\psi$ has a local minimum at a point $(t, x) \in(0, T) \times V$ for a test function $\psi(s, y)=\varphi(s, y)+h(s,\|y\|)$ then
\begin{equation}
    -\partial_t \psi(t, x)-\left\langle x, A^* D \varphi(t, x)\right\rangle+\mathcal F\left(t,x,-D \psi(t, x),-D^2 \psi(t, x)\right) \leq 0.
\end{equation}
A $B$-continuous viscosity solution of \eqref{eq:PDE_app} is a function which is both a $B$-continuous viscosity subsolution and a $ B$-continuous viscosity supersolution of \eqref{eq:PDE_app}.
\end{definition}

We remind the reader that here a function is called locally bounded if it is bounded on bounded sets.

\section{Semiconvexity of the Value Function}

In this appendix, we refine a semiconvexity result for the value function of a stochastic control problem originally established in \cite{defeo_swiech_wessels_2025}. This refinement accounts for the specific structure of the state equation where the control enters linearly and exclusively through the drift. In contrast, \cite{defeo_swiech_wessels_2025} treats the more general case where the control enters nonlinearly.
%Furthermore, for the specific estimate corresponding to Lemma \ref{estimatex1x0one}, the reference allows for the control to enter the noise %coefficient as well.

More precisely, we consider the value function
\begin{equation}\label{appendix_costfunctional}
	v(t,x) := \inf_{\alpha(\cdot)\in \mathcal{A}_t} \mathbb{E} \left [ \int_t^T f(s,X(s),\alpha(s)) \mathrm{d}s + G(X(T)) \right ]
\end{equation}
where the state is governed by
\begin{equation}\label{appendix_state_equation_nonlinear}
\begin{cases}
    \mathrm{d}X(s) = [ AX(s) + b(s,X(s)) + D \alpha(s) ] \mathrm{d}s + \sigma(s,X(s)) \mathrm{d}W(s), \quad s\in [t,T]\\
    X(t) = x\in H.
\end{cases}
\end{equation}
We use the same general setup as in Section \ref{sec:examples} and we impose the following assumptions on the coefficients:
\begin{assumption}\label{assumption_b_sigma_lipschitz}
\begin{enumerate}[label=(\roman*)]
    \item The function $b : [0,T]\times H \to H$ is continuous and there is a constant $C \geq 0$ such that
    \begin{equation}
        \| b(s,x) - b(s,x') \|_H \leq C \| x-x'\|_{-1},\quad \forall s\in [0,T], x,x'\in H.
    \end{equation}
    \item There is a constant $C\geq 0$ such that
    \begin{equation}
        \| b(s,x) \|_H \leq C(1+ \| x \|_H ),\quad \forall s\in [0,T], x\in H.
    \end{equation}
    \item The operator $D:\Lambda \to H$ is linear and bounded.
    \item The function $\sigma : [0,T]\times H \to L_2(\Xi,H)$ is continuous and there is a constant $C \geq 0$ such that
    \begin{equation}
        \| \sigma(s,x) - \sigma(s,x') \|_{L_2(\Xi,H)} \leq C \| x-x'\|_{-1},\quad \forall s\in [0,T], x,x'\in H.
    \end{equation}
    \item There is a constant $C\geq 0$ such that
    \begin{equation}
        \| \sigma(s,x) \|_{L_2(\Xi,H)} \leq C(1+ \| x \|_H ),\quad \forall s\in [0,T], x\in H.
    \end{equation}
\end{enumerate}
\end{assumption}

For $x_0,x_1\in H$ and $\alpha_0(\cdot),\alpha_1(\cdot) \in \mathcal{A}_t$, let $X_i(s) = X(s;t,x_i,\alpha_i(\cdot))$, $s\in [t,T]$, denote the solution of \eqref{appendix_state_equation_nonlinear} with initial condition $x_i$ and control $\alpha_i(\cdot)$, $i=1,2$.

\begin{lemma}\label{estimatex1x0one}
    Let Assumption \ref{assumption_b_sigma_lipschitz} be satisfied. Then, there are constants $C_1, C_2 \geq 0$ independent of $T$ such that
    \begin{equation}\label{difference_x1_x0}
    \begin{split}
        &\mathbb{E} \left [ \sup_{s\in [t,T]} \| X_1(s) - X_0(s) \|_{-1}^2 \right ]\\
        &\leq C_1 \mathrm{e}^{C_2 (T-t)} \left ( \| x_1 - x_0 \|_{-1}^2 + (T-t) \mathbb{E} \left [ \int_t^{T} \| \alpha_1(s) - \alpha_0(s) \|^2_{\Lambda} \mathrm{d}s \right ] \right ).
    \end{split}
    \end{equation}
\end{lemma}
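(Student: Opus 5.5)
The plan is to apply Itô's formula to $\|Y(s)\|_{-1}^2=\langle BY(s),Y(s)\rangle$, where $Y(s):=X_1(s)-X_0(s)$, and then close the estimate with the Burkholder--Davis--Gundy inequality and Gronwall's lemma. Subtracting the two mild equations, $Y$ is the mild solution of
\[
\mathrm{d}Y(s)=[AY(s)+b(s,X_1(s))-b(s,X_0(s))+D(\alpha_1(s)-\alpha_0(s))]\mathrm{d}s+(\sigma(s,X_1(s))-\sigma(s,X_0(s)))\mathrm{d}W(s),
\]
with $Y(t)=x_1-x_0$. The point of this structure is that the control difference enters only linearly through the bounded operator $D$. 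Applying Itô's formula in the form of \cite[Proposition 1.164]{fabbri_gozzi_swiech_2017}, exactly as in the proof of Lemma \ref{lem:mildSDEcontinuity}(iii), gives
\[
\|Y(s)\|_{-1}^2=\|x_1-x_0\|_{-1}^2+\int_t^s\Big(2\langle Y,A^*BY\rangle+2\langle B Y,\Delta b+D\Delta\alpha\rangle+\operatorname{Tr}(\Delta\sigma\Delta\sigma^*B)\Big)\mathrm{d}r+2\int_t^s\langle BY,\Delta\sigma\,\mathrm{d}W\rangle .
\]
Here $\Delta b=b(r,X_1(r))-b(r,X_0(r))$, and $\Delta\sigma$, $\Delta\alpha$ are defined analogously.

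The drift terms are handled one at a time.
\begin{itemize}
\item The weak $B$-condition in (A1) gives $2\langle Y,A^*BY\rangle\le 2c_0\|Y\|_{-1}^2$.
\item Since $\|BY\|\le\|B^{1/2}\|\,\|Y\|_{-1}$, Assumption \ref{assumption_b_sigma_lipschitz}(i) gives $2\langle BY,\Delta b\rangle\le C\|Y\|_{-1}^2$.
\item By Assumption \ref{assumption_b_sigma_lipschitz}(iv), $\operatorname{Tr}(\Delta\sigma\Delta\sigma^*B)\le\|B\|\,\|\Delta\sigma\|_{L_2}^2\le C\|Y\|_{-1}^2$.
\item For the control term I use the weighted Young inequality
\[
2\langle BY,D\Delta\alpha\rangle\le \frac{1}{T-t}\|Y\|_{-1}^2+C(T-t)\|\Delta\alpha\|_\Lambda^2 .
\]
\end{itemize}
This choice of weights is what produces the factor $(T-t)$ in front of the control integral. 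The extra $\frac1{T-t}$ in the exponent is harmless, because after integrating over $[t,T]$ it contributes only $e^{1}$.

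The stochastic integral is estimated via Burkholder--Davis--Gundy:
\[
\mathbb{E}\sup_{s}\Big|\int_t^s\langle BY,\Delta\sigma\,\mathrm{d}W\rangle\Big|\le C\,\mathbb{E}\Big(\int_t^T\|Y\|_{-1}^2\|\Delta\sigma\|_{L_2}^2\mathrm{d}r\Big)^{1/2}\le \tfrac14\mathbb{E}\sup_s\|Y\|_{-1}^2+C\int_t^T\mathbb{E}\|Y\|_{-1}^2\mathrm{d}r .
\]
Absorbing the first term on the left, and noting that the left side is finite by the moment bounds, I obtain for $\phi(s):=\mathbb{E}\sup_{r\in[t,s]}\|Y(r)\|_{-1}^2$ the inequality
\[
\phi(s)\le 2\|x_1-x_0\|_{-1}^2+C(T-t)\,\mathbb{E}\int_t^T\|\Delta\alpha\|_\Lambda^2+\int_t^s\Big(C+\frac{2}{T-t}\Big)\phi(r)\,\mathrm{d}r .
\]
Gronwall's lemma then yields the claim with constant $2e^{2}e^{C(T-t)}$, where $C_1,C_2$ depend only on $c_0$, $\|B\|$, $\|D\|$ and the Lipschitz constants, and not on $T$.

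The main obstacle is justifying Itô's formula for the mild solution with the unbounded $A$, since the proposition cited is stated for a deterministic initial datum. The formula needs $\langle Y,A^*BY\rangle$ to make sense, which holds because $A^*B$ is bounded. If a direct application is problematic, I would fall back on the Yosida approximations $A_n$: run the argument for the strong solutions with $A_n$, where Remark \ref{remark_convergence_Dv}(i) gives the weak $B$-condition with constant $2c_0$ uniformly in $n$, and then pass to the limit $n\to\infty$ using convergence of the approximating solutions in $L^2(\Omega;C([t,T];H))$.
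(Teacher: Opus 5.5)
Your argument is correct and is essentially the paper's proof. The paper defers to the argument of \cite[Lemma 5.3]{defeo_swiech_wessels_2025}: It\^o's formula applied to $\|X_1(s)-X_0(s)\|_{-1}^2$, the weak $B$-condition, the $H_{-1}$-Lipschitz bounds on $b$ and $\sigma$, then BDG and Gronwall. As the paper notes, the extra factor $(T-t)$ is available because the control enters only through the linear drift term $D\alpha$ and not through $\sigma$; you exploit this via the weighted Young inequality with weight $1/(T-t)$, and using Cauchy--Schwarz in time and absorbing into the supremum would work equally well.
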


\begin{proof}
    The proof follows along the same lines as the proof of \cite[Lemma 5.3, see also Lemma 3.3]{defeo_swiech_wessels_2025}. In the present case, there is no control in the noise coefficient $\sigma$. This allows us to obtain the additional $(T-t)$ term on the right-hand side of \eqref{difference_x1_x0}.
\end{proof}

\begin{assumption}\label{assumption_b_sigma_frechet}
	\begin{enumerate}[label=(\roman*)]
		\item Let $b:[0,T]\times H \to H$ be Fr\'echet differentiable in the second variable and let there be a constant $C>0$ such that
		\begin{equation}
			\| ( D b(s,x) - D b(s,x^{\prime}) )(x-x') \|_{-1} \leq C \|x-x^{\prime}\|^2_{-1}, \quad \forall s\in [0,T], x,x^{\prime}\in H.
		\end{equation}
		\item Let $\sigma:[0,T]\times H\to L_2(\Xi,H)$ be Fr\'echet differentiable in the second variable and let there be a constant $C>0$ such that
		\begin{equation}
			\| ( D \sigma(s,x) - D \sigma(s,x^{\prime}) ) (x-x') \|_{L_2(\Xi,H_{-1})} \leq C \|x-x^{\prime}\|^2_{-1}, \quad \forall s\in [0,T], x,x'\in H.
		\end{equation}
	\end{enumerate}
\end{assumption}

For $\lambda\in [0,1]$ we introduce
    \begin{equation}\label{appendix_lambdadefinition}
	\begin{cases}
		\alpha_{\lambda}(s) = \lambda \alpha_1(s) + (1-\lambda) \alpha_0(s)\\
		x_{\lambda} = \lambda x_1 +(1-\lambda)x_0\\
		X_{\lambda}(s) = X(s;t,x_{\lambda},\alpha_{\lambda}(\cdot))\\
		X^{\lambda}(s) = \lambda X_1(s) +(1-\lambda)X_0(s).  
	\end{cases}
    \end{equation}

\begin{lemma}\label{appendix_lemma_2}
    Let Assumptions \ref{assumption_b_sigma_lipschitz} and \ref{assumption_b_sigma_frechet} be satisfied. Then, there are constants $C_1,C_2\geq 0$ independent of $T$ such that
    \begin{equation}\label{appendix_estimate_lemma_2}
    \begin{split}
			&\mathbb{E} \left [ \sup_{s\in [t,T]} \| X^{\lambda}(s) - X_{\lambda}(s) \|_{-1} \right ]\\
            &\leq C_1 \mathrm{e}^{C_2(T-t)} \lambda (1-\lambda) \left ( \|x_1-x_0\|_{-1}^2 + (T-t) \mathbb{E} \left [ \int_t^T \|\alpha_1(s) - \alpha_0(s) \|_{\Lambda}^2 \mathrm{d}s \right ] \right )
    \end{split}
	\end{equation}
    for all $\lambda \in [0,1]$, $x_0,x_1\in H$ and $\alpha_0(\cdot),\alpha_1(\cdot) \in \mathcal{A}_t$.
\end{lemma}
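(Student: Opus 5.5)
We work with the difference $Y(s):=X^{\lambda}(s)-X_{\lambda}(s)$ and apply Itô's formula to $\|Y(s)\|_{-1}^2$, or rather to a smooth approximation of $\|Y\|_{-1}$. Since $\alpha_\lambda=\lambda\alpha_1+(1-\lambda)\alpha_0$ and the control enters linearly through $D$, the controls cancel exactly in $Y$, and so do the initial data ($Y(t)=0$). Hence
\[
\mathrm{d}Y=[AY+\lambda b(s,X_1)+(1-\lambda)b(s,X_0)-b(s,X_\lambda)]\mathrm{d}s+[\lambda\sigma(s,X_1)+(1-\lambda)\sigma(s,X_0)-\sigma(s,X_\lambda)]\mathrm{d}W.
\]
We split the drift difference as
\[
\big[\lambda b(X_1)+(1-\lambda)b(X_0)-b(X^\lambda)\big]+\big[b(X^\lambda)-b(X_\lambda)\big],
\]
and the diffusion difference in the same way. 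The second bracket is Lipschitz in $\|Y\|_{-1}$ by Assumption \ref{assumption_b_sigma_lipschitz}(i),(iv). For the first bracket we use the standard Taylor argument with Assumption \ref{assumption_b_sigma_frechet}: writing both $b(X_i)-b(X^\lambda)$ as integrals of $Db$ along segments gives
\[
\|\lambda b(X_1)+(1-\lambda)b(X_0)-b(X^\lambda)\|_{-1}\le C\lambda(1-\lambda)\|X_1-X_0\|_{-1}^2,
\]
and the analogous bound in $L_2(\Xi,H_{-1})$ holds for $\sigma$. Note the drift terms here are measured in $H_{-1}$, which is what enters $\langle \cdot,BY\rangle$.

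The next step is the Itô computation. For $\|Y\|_{-1}^2$ the weak $B$-condition gives $\langle AY,BY\rangle\le c_0\|Y\|_{-1}^2$ (rigorously via Yosida approximations or \cite[Proposition 1.164]{fabbri_gozzi_swiech_2017}). The cross term with the convexity defect is bounded by $C\lambda(1-\lambda)\|X_1-X_0\|_{-1}^2\|Y\|_{-1}$. The Itô correction contributes $\|\text{diffusion defect}\|_{L_2(\Xi,H_{-1})}^2\lesssim \|Y\|_{-1}^2+\lambda^2(1-\lambda)^2\|X_1-X_0\|_{-1}^4$.

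Because the target estimate is linear in $\mathbb{E}\sup\|Y\|_{-1}$ rather than quadratic, we work with $\varphi_\varepsilon(Y)=(\|Y\|_{-1}^2+\varepsilon)^{1/2}$, as in \cite[Lemma 5.4]{defeo_swiech_wessels_2025}. The drift contributions then become $\le c\,\varphi_\varepsilon+C\lambda(1-\lambda)\|X_1-X_0\|_{-1}^2$. The second-order term is controlled by $\|\sigma\text{-defect}\|^2/\varphi_\varepsilon$; its $\|Y\|^2_{-1}$ part gives $C\varphi_\varepsilon$, while the quartic part divided by $\varphi_\varepsilon$ must be handled by instead bounding the martingale and quadratic variation via Burkholder--Davis--Gundy applied to the $\sigma$-defect directly. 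The sup is then handled via BDG on the stochastic integral $\int \langle \sigma\text{-defect}\,\mathrm{d}W, BY\rangle/\varphi_\varepsilon$, whose quadratic variation is bounded by $\int\|\sigma\text{-defect}\|^2_{L_2(\Xi,H_{-1})}$. This leads to the inequality
\[
\mathbb{E}\sup_{r\le s}\|Y(r)\|_{-1}\le C\int_t^s\mathbb{E}\sup_{u\le r}\|Y(u)\|_{-1}\,\mathrm{d}r+C\lambda(1-\lambda)\,\mathbb{E}\sup\|X_1-X_0\|_{-1}^2+C\,\mathbb{E}\Big(\int_t^s\|Y\|_{-1}^2\Big)^{1/2},
\]
after using $\lambda^2(1-\lambda)^2\le\lambda(1-\lambda)$ and $\sqrt{a^4}=a^2$. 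We then absorb the last term via $(\int\|Y\|^2)^{1/2}\le(\sup\|Y\|)^{1/2}(\int\|Y\|)^{1/2}$ and Young's inequality, and let $\varepsilon\to0$.

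Finally, Gronwall's inequality gives the factor $\mathrm{e}^{C_2(T-t)}$, and Lemma \ref{estimatex1x0one} bounds $\mathbb{E}\sup\|X_1-X_0\|_{-1}^2$ by $C_1\mathrm{e}^{C_2(T-t)}(\|x_1-x_0\|_{-1}^2+(T-t)\mathbb{E}\int\|\alpha_1-\alpha_0\|^2)$. This yields \eqref{appendix_estimate_lemma_2} with the $(T-t)$ factor inherited from that lemma. The constants do not depend on $T$, since only the Lipschitz and semiconcavity constants together with $c_0$ enter.

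The main obstacle is the rigorous Itô and BDG step for the non-smooth functional $\|Y\|_{-1}$ with the unbounded operator $A$, namely justifying $\langle AY,BY\rangle\le c_0\|Y\|_{-1}^2$ for mild solutions. We would first carry out the estimate with $A$ replaced by its Yosida approximations $A_n$, which satisfy the weak $B$-condition with constant $2c_0$ by Remark \ref{remark_convergence_Dv}(i), and then pass to the limit $n\to\infty$ using convergence of the mild solutions.
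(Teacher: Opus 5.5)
Your architecture matches the paper's. Its proof defers to \cite[Lemma 5.8]{defeo_swiech_wessels_2025} and notes that, because the control enters linearly, the control part of the drift defect cancels, so the extra factor $(T-t)$ comes from Lemma \ref{estimatex1x0one}. Your Taylor bounds on the $b$- and $\sigma$-defects, the weak $B$-condition, Gronwall and the final appeal to Lemma \ref{estimatex1x0one} are all fine.

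The gap is exactly the step you flag, and the remedy you indicate does not work. Put $\Sigma_2:=\lambda\sigma(s,X_1)+(1-\lambda)\sigma(s,X_0)-\sigma(s,X^{\lambda})$. In It\^o's formula for $\varphi_\varepsilon(Y)$, the term $\frac{1}{2\varphi_\varepsilon(Y)}\|\Sigma_2\|^2_{L_2(\Xi,H_{-1})}$ is a $\mathrm{d}s$-term, not part of the martingale, so Burkholder--Davis--Gundy cannot act on it. The negative correction $-\frac{1}{2\varphi_\varepsilon(Y)^3}\|\Sigma^*BY\|^2_{\Xi}$ vanishes at $Y=0$. Since $Y(t)=0$, the bad term is of size $\lambda^2(1-\lambda)^2\|X_1-X_0\|_{-1}^4/\sqrt{\varepsilon}$ whenever $\|Y\|_{-1}^2\lesssim\varepsilon$, and it blows up as $\varepsilon\to0$.

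This is not a bookkeeping artifact but a local-time effect. Already for $A=0$, $b=0$ and deterministic $\Sigma_2$, $\mathbb{E}\|Y(s)\|_{-1}$ is comparable to $\big(\int_t^s\|\Sigma_2\|^2_{L_2(\Xi,H_{-1})}\mathrm{d}r\big)^{1/2}$. No Gronwall argument on a $\mathrm{d}s$-integrand linear in $\|\Sigma_2\|^2$ can produce that square-root scaling. Keeping $\varepsilon>0$ fixed and optimizing does not help either. It needs fourth moments of $X_1-X_0$, hence of $\int_t^T\|\alpha_1-\alpha_0\|^2_\Lambda\,\mathrm{d}s$, and the right-hand side of \eqref{appendix_estimate_lemma_2} does not control these for random controls.

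The fix is to drop $\varphi_\varepsilon$ and take square roots pathwise after the supremum. Apply \cite[Proposition 1.164]{fabbri_gozzi_swiech_2017} to $\|Y\|_{-1}^2$. No Yosida approximation is needed: the formula produces $\langle Y,A^*BY\rangle$, which the weak $B$-condition bounds by $c_0\|Y\|_{-1}^2$ on all of $H$, so your stated main obstacle is not where the difficulty lies. Let $\Sigma$ be the full diffusion difference and $M(r):=2\int_t^r\langle\Sigma\,\mathrm{d}W,BY\rangle$. Bounding $\sup_{r\in[t,s]}\|Y(r)\|_{-1}^2$ and taking square roots gives
\[
\begin{split}
\sup_{r\in[t,s]}\|Y(r)\|_{-1}&\leq C\Big(\int_t^s\|Y\|_{-1}^2\,\mathrm{d}r\Big)^{\frac12}+C\Big(\lambda(1-\lambda)\int_t^s\|X_1-X_0\|_{-1}^2\|Y\|_{-1}\,\mathrm{d}r\Big)^{\frac12}\\
&\quad+C\Big(\int_t^s\|\Sigma_2\|^2_{L_2(\Xi,H_{-1})}\,\mathrm{d}r\Big)^{\frac12}+\sup_{r\in[t,s]}|M(r)|^{\frac12}.
\end{split}
\]
Then use BDG with exponent $\frac12$,
\[
\mathbb{E}\sup_{r\in[t,s]}|M(r)|^{1/2}\leq C\,\mathbb{E}\Big[\Big(\sup_{r\in[t,s]}\|Y(r)\|_{-1}\Big)^{1/2}\Big(\int_t^s\|\Sigma\|^2_{L_2(\Xi,H_{-1})}\mathrm{d}r\Big)^{1/4}\Big],
\]
and Young's inequality to absorb $\delta\,\mathbb{E}\sup\|Y\|_{-1}$, which is finite by the moment bounds. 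Two further bounds close the argument:
\[
\Big(\int_t^s\|\Sigma_2\|^2\mathrm{d}r\Big)^{1/2}\leq C\lambda(1-\lambda)\sqrt{T-t}\,\sup\|X_1-X_0\|_{-1}^2,
\]
and the cross term is at most $\delta\sup\|Y\|_{-1}+C_\delta\lambda(1-\lambda)\int_t^s\|X_1-X_0\|^2_{-1}\mathrm{d}r$. With these you arrive at exactly your displayed inequality, and the rest of your argument goes through.
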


\begin{proof}
    The proof follows along the same lines as the proof of \cite[Lemma 5.8, see also Lemma 3.9]{defeo_swiech_wessels_2025}. We just point out that in that proof, we now only need to estimate the simpler term
    \begin{equation}\label{estimateb}
        \lambda b(s,X_1(s)) + (1-\lambda) b(s,X_0(s)) - b(s,X^{\lambda}(s)).
    \end{equation}
    Since the control enters the state equation linearly, the corresponding term for the control vanishes. Therefore, the same proof as the one in \cite{defeo_swiech_wessels_2025} yields the additional factor $(T-t)$ on the right-hand side of \eqref{appendix_estimate_lemma_2}.
\end{proof}

\begin{assumption}\label{assumption_lglipschitzfirstvariablew}
\begin{enumerate}[label=(\roman*)]
    \item The function $f:[0,T]\times H\times\Lambda_0 \to \mathbb{R}$ is continuous and there exists a constant $C>0$ such that
    \begin{equation}
      |f(s,x,\alpha) - f(s,x^{\prime},\alpha)| \leq C \|x-x^{\prime}\|_{-1},\quad \forall s\in [0,T], x,x'\in H, \alpha\in \Lambda_0.
    \end{equation}
     \item There exists a constant $C>0$ such that
    \begin{equation}
      |f(s,x,\alpha)| \leq C (1+\|x\|_H+\|\alpha\|_{\Lambda}^2),\quad \forall s\in [0,T], x\in H, \alpha \in \Lambda_0.
    \end{equation}
    \item For $G:H\to \mathbb{R}$, there exists a constant $C>0$ such that
    \begin{equation}
        |G(x) - G(x^{\prime})| \leq C \|x-x^{\prime}\|_{-1}, \quad \forall x,x'\in H.
    \end{equation}
    \item If $\Lambda_0$ is unbounded, we assume that $f$ and $G$ are bounded from below.
\end{enumerate}
\end{assumption}

\begin{assumption}\label{assumption_lgsemiconvexw}
	\begin{enumerate}[label=(\roman*)]
		\item There exist constants $C,\nu \geq 0$ such that the map
		\begin{equation}
			H\times\Lambda_0 \ni (x,\alpha)\mapsto f(s,x,\alpha) + C \|x\|_{-1}^2 - \nu \|\alpha\|^2_{\Lambda}
		\end{equation}
		is convex for all $s\in [0,T]$.
    \item The function $G:H\to\mathbb{R}$ is semiconvex in $H_{-1}$.
	\end{enumerate}
\end{assumption}

\begin{theorem}\label{appendix_theorem_semiconvex}
    Let Assumptions \ref{assumption_b_sigma_lipschitz}, \ref{assumption_b_sigma_frechet}, \ref{assumption_lglipschitzfirstvariablew} and \ref{assumption_lgsemiconvexw} be satisfied. Then, there are constants $C_1, C_2\geq 0$ independent of $T$ such that if $\nu \geq \nu_0 := C_1 \mathrm{e}^{C_2T}T$, then $v(t,\cdot)$ is semiconvex uniformly in $t\in [0,T]$.
\end{theorem}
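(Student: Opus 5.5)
We will follow the standard route for semiconvexity of value functions via the convex combination of controls, as in \cite[Section 5.3.1]{defeo_swiech_wessels_2025}, but we will track the dependence on $T$ using Lemmas \ref{estimatex1x0one} and \ref{appendix_lemma_2}.

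\textbf{Setup.} Fix $t\in[0,T]$, $x_0,x_1\in H$, $\lambda\in[0,1]$ and $\varepsilon>0$. Choose $\varepsilon$-optimal controls $\alpha_0(\cdot),\alpha_1(\cdot)\in\mathcal{A}_t$ for $v(t,x_0)$ and $v(t,x_1)$. Since $\Lambda_0$ is convex, $\alpha_\lambda\in\mathcal{A}_t$. If $\Lambda_0$ is unbounded, we first use the lower bounds of Assumption \ref{assumption_lglipschitzfirstvariablew}(ii),(iv) to restrict to controls with $\mathbb{E}\int_t^T\|\alpha_i\|_\Lambda^2\,\mathrm{d}s$ bounded, so that every quantity below is finite. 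The goal is
\[
\lambda v(t,x_1)+(1-\lambda)v(t,x_0)-v(t,x_\lambda)\geq -C\lambda(1-\lambda)\|x_1-x_0\|_{-1}^2-\varepsilon .
\]
Since $v(t,x_\lambda)\le J(t,x_\lambda;\alpha_\lambda)$, it suffices to bound from below
\[
\lambda J(t,x_1;\alpha_1)+(1-\lambda)J(t,x_0;\alpha_0)-J(t,x_\lambda;\alpha_\lambda).
\]

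\textbf{Splitting the difference.} We split this difference in two.
\begin{enumerate}[label=(\roman*)]
\item The first part is $\lambda J(x_1)+(1-\lambda)J(x_0)$ minus the cost evaluated along $X^\lambda$ with control $\alpha_\lambda$. By Assumption \ref{assumption_lgsemiconvexw}(i), for each $s$ the integrand satisfies
\[
\lambda f(X_1,\alpha_1)+(1-\lambda)f(X_0,\alpha_0)-f(X^\lambda,\alpha_\lambda)\ge -C\lambda(1-\lambda)\|X_1-X_0\|_{-1}^2+\nu\lambda(1-\lambda)\|\alpha_1-\alpha_0\|_\Lambda^2 .
\]
For $G$ we get the same bound from Assumption \ref{assumption_lgsemiconvexw}(ii), without the $\nu$ term.
\item The second part compares the costs along $X^\lambda$ and along $X_\lambda$, both with control $\alpha_\lambda$. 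By the $H_{-1}$-Lipschitz continuity in Assumption \ref{assumption_lglipschitzfirstvariablew}(i),(iii), it is bounded below by $-C(T-t+1)\,\mathbb{E}\sup_s\|X^\lambda(s)-X_\lambda(s)\|_{-1}$.
\end{enumerate}

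\textbf{Estimates.} Lemma \ref{estimatex1x0one} controls $\mathbb{E}\sup\|X_1-X_0\|_{-1}^2$, and Lemma \ref{appendix_lemma_2} controls $\mathbb{E}\sup\|X^\lambda-X_\lambda\|_{-1}$. Both are bounded by
\[
C_1\mathrm{e}^{C_2T}\lambda(1-\lambda)\Big(\|x_1-x_0\|_{-1}^2+(T-t)\,\mathbb{E}\!\int_t^T\|\alpha_1-\alpha_0\|_\Lambda^2\,\mathrm{d}s\Big),
\]
where the $\lambda(1-\lambda)$ factor is already present for the first term. Collecting everything, the lower bound reads
\[
-\tilde C_1\mathrm{e}^{C_2T}\lambda(1-\lambda)\|x_1-x_0\|_{-1}^2+\lambda(1-\lambda)\big(\nu-\tilde C_1\mathrm{e}^{C_2T}T\big)\,\mathbb{E}\!\int_t^T\|\alpha_1-\alpha_0\|_\Lambda^2\,\mathrm{d}s .
\]
Here $\tilde C_1$ absorbs factors $(1+T)$ coming from the time integrals, which are harmless after enlarging $C_2$. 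If $\nu\ge\nu_0:=\tilde C_1\mathrm{e}^{C_2T}T$, the control term is nonnegative and can be dropped. Letting $\varepsilon\to0$ then gives semiconvexity with a constant uniform in $t$.

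\textbf{Main obstacle.} The key point is that the control-difference terms carry the factor $(T-t)$ coming from Lemmas \ref{estimatex1x0one} and \ref{appendix_lemma_2}. The bookkeeping must be done so that these terms appear multiplied by exactly one factor of $T$ and are not multiplied by additional $T$-independent constants. In particular, the running-cost integration produces a factor $(T-t)$ in front of the $\sup$ bounds, and the terminal-cost term produces a factor $1$. Both the state error and the control error from the lemmas contain the factor $(T-t)$ once, so they combine to give $C\mathrm{e}^{CT}T(1+T)$, which is still of the form $C_1\mathrm{e}^{C_2T}T$.
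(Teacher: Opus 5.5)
Your proposal is correct and is essentially the paper's argument. The paper's proof simply defers to the convex-combination-of-$\varepsilon$-optimal-controls proof of \cite[Theorem 5.10]{defeo_swiech_wessels_2025} and attributes the extra factor $T$ in $\nu_0$ to the $(T-t)$ factors in Lemmas \ref{estimatex1x0one} and \ref{appendix_lemma_2}, which is exactly the splitting (convexity along $X^\lambda$, then $H_{-1}$-Lipschitz comparison of $X^\lambda$ with $X_\lambda$) and bookkeeping you carry out; one harmless slip is that finiteness of $\mathbb{E}\int_t^T\|\alpha_1(s)-\alpha_0(s)\|_\Lambda^2\,\mathrm{d}s$ comes from square integrability of admissible controls, not from Assumption \ref{assumption_lglipschitzfirstvariablew}(ii), which is a growth bound from above rather than a lower bound.
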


\begin{proof}
    The proof follows exactly the proof of \cite[Theorem 5.10, see also Theorem 3.11]{defeo_swiech_wessels_2025}. The presence of the additional factor $T$ in $\nu_0$ stems from Lemmas \ref{estimatex1x0one} and \ref{appendix_lemma_2}.
\end{proof}

\small{\paragraph{\textbf{Acknowledgments.}} Lukas Wessels acknowledges the financial support of the European Research Council (ERC) under the European Union’s Horizon Europe research and innovation programme (AdG ELISA project, Grant Agreement No. 101054746). Views and opinions expressed are however those of the authors only and do not necessarily reflect those of the European Union or the European Research Council Executive Agency. Neither the European Union nor the granting authority can be held responsible for them .}

\end{document}